\documentclass[11pt,reqno,twoside]{amsart}
\usepackage{amsmath,amsthm, amscd, amssymb, amsfonts, palatino}
\usepackage[utf8]{inputenc}
\usepackage{color}
\usepackage[all]{xy}
\usepackage{tikz}
\usetikzlibrary{matrix,arrows,decorations.pathmorphing}
\usepackage{tikz-cd}
\usepackage{hhline}
\oddsidemargin -.3in
\evensidemargin -.3in
\topmargin -.3in
\headheight .3in
\headsep .2in
\textwidth 17cm
\textheight 22cm
\footskip .2in
\footnotesep .1in

\newcommand{\A}{{\mathcal A}}

\newcommand{\G}{{\mathcal G}}

\newcommand{\F}{{\mathcal F}}
\newcommand{\C}{{\mathcal C}}
\newcommand{\D}{{\mathcal D}}
\newcommand{\Ec}{{\mathcal E}}
\newcommand{\E}{{\mathbf E}}
\newcommand{\Uc}{{\mathcal U}}

\newcommand{\B}{{\mathcal B}}
\newcommand{\T}{{\mathcal T}}
\newcommand{\Hc}{{\mathcal H}}
\newcommand{\Vc}{{\mathcal V}}
\newcommand{\Wc}{{\mathcal W}}
\newcommand{\Pc}{{\mathcal P}}
\newcommand{\Kc}{{\mathbf K}}
\newcommand{\ep}{{\epsilon}}

\newcommand{\Sc}{{\mathbb S}}

\DeclareMathOperator*{\Tim}{\times}
\newcommand{\Times}[2]{\sideset{_#1}{_#2}\Tim}
\newcommand{\ubs}[1]{{#1}^{\bullet\bullet}}
\newcommand{\dbs}[1]{{#1}_{\bullet\bullet}}
\newcommand{\pfib}[2]{\;{}_{#1}\hspace{-2pt}\times_{#2}\;}

\newcommand{\Ext}{\operatorname{Ext}}

\newcommand{\Ho}{\operatorname{H}}

\newcommand\Hom{\operatorname{Hom}}
\newcommand\Opext{\operatorname{\mathcal{O}pext}}
\newcommand\Tot{\operatorname{Tot}}


\newcommand\pfibrado[2]{\;{}_{#1}\hspace{-2pt}\times_{#2}\;} 

\numberwithin{equation}{section}\theoremstyle{plain}

\newtheorem{theorem}{Theorem}[section]
\newtheorem{lemma}[theorem]{Lemma}

\newtheorem{proposition}[theorem]{Proposition}

\theoremstyle{definition}
\newtheorem{definition}[theorem]{Definition}
\newtheorem{exa}[theorem]{Example}

\theoremstyle{remark}
\newtheorem{obs}[theorem]{Remark}
\newtheorem{remark}[theorem]{Remark}

\newcommand\id{\operatorname{id}}
\newcommand\idd{\mathbf{id}}

\newcommand\iddv{\mathbf{id}}

\def\pf{\begin{proof}}
\def\epf{\end{proof}}

\theoremstyle{remark}

 \newcommand\caja[5]{
 \begin{tabular}{c}
 \begin{picture}(50,60)(0,0) 
 \put(5,8){\framebox(40,40){#1}}
 \put(25,55){\makebox(0,0)[c]{${\scriptstyle \text{#2}}$}} 
 \put(48,28){\makebox(0,0)[l]{${\scriptstyle \text{#3}}$}} 
 \put(25,0){\makebox(0,0)[c]{${\scriptstyle \text{#4}}$}}  
 \put(3,28){\makebox(0,0)[r]{${\scriptstyle \text{#5}}$}} 
 \end{picture}
 \end{tabular}
 }

\newcommand\cajaMedium[5]{
 \begin{tabular}{c}
 \begin{picture}(37,50) 
 \put(3,8){\framebox(30,30){#1}}
 \put(18,43){\makebox(0,0)[c]{${\scriptstyle \text{#2}}$}} 
 \put(37,23){\makebox(0,0)[l]{${\scriptstyle \text{#3}}$}} 
 \put(18,1){\makebox(0,0)[c]{${\scriptstyle \text{#4}}$}}  
 \put(1,23){\makebox(0,0)[r]{${\scriptstyle \text{#5}}$}} 
 \end{picture}
 \end{tabular}
 }


\title [Double Groupoid Cohomology]{Double groupoid cohomology and 
extensions}

\author[Ochoa]{Jes\'us Alonso Ochoa Arango*}
\email{jesus.ochoa@javeriana.edu.co}
\author[Tiraboschi]{Alejandro Tiraboschi**}
\email{tirabo@famaf.unc.edu.ar}

\address{\noindent
*Departamento de Matem\'aticas, Facultad de ciencias,
Pontificia Universidad Javeriana. Bogot\'a, Colombia.
}

\address{\noindent
** Facultad de Matem\'atica, Astronom\'\i a y F\'\i sica, Universidad
Nacional de C\'ordoba.  CIEM -- CONICET. (5000) Ciudad
Universitaria, C\'ordoba, Argentina.}

\subjclass[2010]{18A99; 18B40; 18D05; 18F20; 18G30; 20G10; 20J99; 20L05; 20M50; 22A22; 22E99; 55N05; 55N30; 55U10; 55U15; 57T99; 58H05}
\date{\today}


\begin{document}

\renewcommand{\baselinestretch}{1.2}
\thispagestyle{empty}

\begin{abstract}
We study extensions of double groupoids in the sense of \cite{AN2}
and show some classical results of group theory extensions
in the case of double groupoids. For it, given a double
groupoid $(\B; \Vc,\Hc; \Pc)$ \emph{acting} on an abelian group bundle 
$\Kc \to \Pc$, we introduce a cohomology double complex, in a similar way as was done
in \cite{AN2} and we show that the extensions of $\F$ by $\Kc$ are classified
by the total first cohomology group of the associated total complex.

With the aim to extend the above results to the topological setting,
following ideas of Deligne \cite{D} and Tu \cite{tu}, 
by means of simplicial methods, we introduce a \emph{sheaf cohomology} 
for topological double groupoids, generalizing the double groupoid 
cohomological in the discrete case, and we carry out in the topological setting 
the results obtained for discrete double groupoids.
\end{abstract}

\maketitle

\section*{Introduction}

The notion of double groupoids was introduced by Ehresmann \cite{ehr},
and later studied in \cite{brown, bj, BM, bs} and references therein.

A double groupoid is a set $\B$ endowed with two different
but compatible groupoid structures.
It is useful to represent the elements of $\B$ as boxes that merge
horizontally or vertically according to the
groupoid multiplication into consideration.
The vertical (respectively horizontal) sides of a box belong
to another groupoid $\Vc$ (resp. $\Hc$). Later, the notion of double Lie groupoid 
was defined and investigated by K. Mackenzie \cite{mk1, mk2}; see also \cite{p, mk3, wl}
for applications to differential and Poisson geometry.
In particular the question of the classification of double
Lie groupoids was raised in \cite{mk1}, see also \cite{BM}.
Since then, several classes of double groupoids has been classified;
in \cite{BM}, a complete answer was given in the restricted
case of locally trivial double Lie groupoids.  More recently in \cite{AN3},
was given a description in two stages of discrete double groupoids.
Before to state it, we remind that a diagram
of groupoids over a pair of groupoids $\Vc$ and $\Hc$ is a triple $(\D, j, i)$
where $\D$ is a groupoid  and $i: \Hc \to \D$, $j: \Vc
\to \D$ are morphisms of groupoids (over a fixed set of points).
In  \cite{AN3} the authors showed that:
\begin{itemize}
\item {\bf Stage 1. \cite[Thm. 1.9]{AN3}. } \label{stage_1}
Any double groupoid is an extension of a slim double groupoid (its \emph{frame}) by an abelian group bundle.
\end{itemize}
In the same paper, the authors also prove the following theorem.
\begin{itemize}
\item {\bf Stage 2. \cite[Thm. 2.8]{AN3}. } \label{stage_2}
The category of slim double groupoids, with fixed vertical
and horizontal goupoids $\Vc$ and $\Hc$, satisfying the filling condition,
is equivalent to the category of diagrams over $\Vc$ and $\Hc$.
\end{itemize}

In \cite{AOT},  we extend theorem \ref{stage_1} to the setting of double Lie
groupoids. In this context, the usual filling condition is replaced
by the request that the \emph{double source map} is a surjective submersion \cite{mk1}.
As is expected, theorem \ref{stage_1} doesn't has an identical
statement in this case, and there are some topological and
geometrical ingredients to be taken in account. The main result of that work was,
\begin{itemize}
\item \cite[Thm. 3.7]{AOT}
\emph{The category of slim double Lie groupoids, with fixed vertical
and horizontal Lie groupoids $\Vc$ and $\Hc$, and proper core action,
is equivalent to the category of diagrams of Lie groupoids $(\D, j, i)$
such that the maps $j$ and $i$ are transversal at the identities.}
\end{itemize}

In this work we extend an equivalent formulation of {\bf stage 2} to the  setting of double topological groupoids. The paper is divided in two parts and in what follows we will describe the contents of each one of them.

The first part goes from section \ref{prels} to \ref{seccion4} and develop the cohomology of discrete double groupoids. In the section \ref{prels} we remind some basic facts about double groupoids (discrete and topological).  In section \ref{discrete_cohomology_section}, we introduce the cohomology of double groupoids by means of the total complex of a double complex attached to any double groupoid. In section \ref{seccion4} 
we show that given a double groupoid $(\F; \Vc, \Hc; \Pc)$ acting on an abelian group bundle $\Kc \to \Pc$, all the extensions of $\F$ by $\Kc$ are classified by the first total cohomology group of such total complex (Thm. \ref{corresp-cocycles-opext}), i. e. there is a bijection between $\mathcal{O}pext(\F,\Kc)$
and the cohomology group $\Ho^1_{\Tot}(\F, \Kc)$. This result is basically an extension of Stage 2. 

The second part of this work, sections 4 and 5, is concerned with a geometric construction of the bisimplicial set associated to a double groupoid and with the simplicial cohomology of topological double groupoids. 
Secction 4 contains basic facts about simplicial and bisimplicial sets. The main result of this section is Thm. \ref{double_skeleton_core_groupoid} that provides a construction of the double skeleton of a double groupoid as a homogeneous space of the core groupoid attached to the double groupoid. This result enables us to realize the double skeleton as a bisimplicial set (or space, or manifold, depends on the context) in a simple way.

In section 5  we extend the cohomology theory developed in the first part to the setting of topological double groupoids. For the discrete case, the proof of proposition \ref{any.ext.is.smash} (a reformulation of Stage 2), depends strongly on the existence of a section of the frame map that maps a double groupoid onto its frame. In the continuous (differentiable) setting we cannot guarantee any more the existence of a continuous (resp. smooth) global section of this map. If the frame of a double topological (resp. Lie) groupoid is a topological space (resp. smooth manifold) and the frame map is an open surjective map  (resp.  a surjective submersion), we can assure the existence of local sections and then, we can try to localize the extension process to open sets where the local sections exists. To do this, in this section we develop a \v{C}ech double groupoid cohomology that will allow us to classify the extensions of topological double groupoids by abelian topological group bundles in a similar way that in the discrete case (Thm. \ref{class_ext_simplicial_version}). 
Roughly speaking, given a \emph{bisimplicial sheaf} over a \emph{bisimplicial set}, we can define a double complex associated to it, and then we could define the cohomology of the bisimplicial set with values in the bisimplicial sheaf as the cohomology of the associated total complex:  let $(\F; \Vc, \Hc; \Pc)$ be a  topological double  groupoid  acting on a bundle $\Kc \to \Pc$ of topological abelian groups. If $\mathcal{U} = \{ \mathcal{U}_i \}_{i \in I}$ is an open cover of $\Pc$, the double groupoid $(\F[\mathcal{U}]); \Vc[\mathcal{U}], \Hc[\mathcal{U}]; \Pc[\mathcal{U}])$
is the \textit{\v{C}ech double groupoid associated to $(\F, \mathcal{U})$} (see def.  \ref{cech_groupoid} and \ref{open_cover}), and $\Opext(\F[\mathcal{U}], \Kc[U])$ denotes the set of all extensions of $\F[\Uc]$ by $\Kc[\Uc]$. Now, we can construct on the \emph{double skeleton} $\ubs{\F} = \{ \F^{(m,n)} \}_{(m,n) \in \mathbb{N}^2}$ of the double groupoid, a double bisimplicial sheaf $\ubs{\A}$  associated with the action $\Kc \to \Pc$ and  define 
\begin{equation}\label{extension_form_0}
\Ext(\F, \Kc) := 	\underset{\mathcal{U}}  {\underset{\longrightarrow} \lim } \; \Opext(\F[\mathcal{U}], \Kc[U]), 
\end{equation}
where  $\mathcal{U} = \{ \mathcal{U}_i \}_{i \in I}$ runs over open
covers of $\Pc$. Then,  the main result of the paper could be stated as
\begin{equation}
\Ext(\F, \Kc) \cong \check{\Ho}^{1}_{\Tot}(\dbs{\Uc}; \ubs{\A}).
\end{equation}

\renewcommand{\baselinestretch}{1.2}
\renewcommand{\thefootnote}{}
\thispagestyle{empty}

\begin{section}{Preliminaries on groupoids an double groupoids}\label{prels}

We denote a groupoid in the form $ \xymatrix{\G \ar@<2pt>[r]^{s}
\ar@<-2pt>[r]_{e} &\Pc}$, where $s$ and $e$ stand for \emph{source} and
\emph{end} respectively; and the identity map will be denoted by $\id: \Pc \to \G$.

Recall that a groupoid $ \xymatrix{\G \ar@<2pt>[r]^{s}
\ar@<-2pt>[r]_{e} & \Pc} $ is a \textit{topological groupoid} \cite{renault} if
$\Pc$ and $\G$ are topological spaces and all the structural maps are continuous. 
The {\em anchor} of $\G$   is the map $\chi:\G \to \Pc \times \Pc$ given by
$\chi(g) = (s(g),e(g) )$.
We recall the following well known definition.

\begin{definition}
A {\em left action} of a groupoid $ \xymatrix{\G \ar@<2pt>[r]^{s}
\ar@<-2pt>[r]_{e} &\Pc}$ {\em along} a map 
$\epsilon:\Ec \to \Pc$ is given by a map from the pullback $\G \pfibrado{e}{\epsilon}\Ec$ to $\Ec$, denoted by $(g,n) \mapsto gn$, such that:
$$\epsilon(hy) = s(h),\quad \id(\epsilon(y))\; y = y, \quad (gh)y = g(hy),$$
for all $g,h \in \G$ and $y \in N$ such that $e(g) = s(h)$ and $e(h)
= \epsilon(y)$. We shall simply say that the groupoid $\G$ acts on $\Ec$.
\end{definition}

\begin{definition}[Ehresmann]

A \textit{double groupoid} is a groupoid object internal to the
category of groupoids. In other terms, a \textit{double groupoid} consists
of a set $\B$ with two groupoid structures with \textit{bases} $\Hc$
and $\Vc$, which are themselves groupoids over a common base $\Pc$,
all subject to the compatibility condition that the structure maps
of each structure are morphisms with respect to the other.
\end{definition}

It is usual to represent a double groupoid $(\B; \Vc, \Hc; \Pc)$
in the form of a diagram of four related groupoids
$$
\xymatrix{ \B \ar@<2pt>[rr]^{l} \ar@<-2pt>[rr]_{r} \ar@<2pt>[d]^{b}
\ar@<-2pt>[d]_{t}
& & \Vc \ar@<2pt>[d]^{b} \ar@<-2pt>[d]_{t}\\
\Hc  \ar@<2pt>[rr]^{l} \ar@<-2pt>[rr]_{r} & & \Pc  }
$$
where $t$, $b$, $l$, $r$ mean \emph{``top'', ``bottom'', ``left''} and \emph{``right''},
respectively. We sketch the main axioms that these groupoids should
satisfy and refer \emph{e.~g.} to \cite[Sec. 2]{AN1} and
\cite[Sec. 1]{AN2} for a detailed exposition and other
conventions.

The elements of $\B$ are called \emph{``boxes''} and will be denoted by
$$
A = \quad\caja{$A$}{$t(A)$}{$r(A)$}{$b(A)$}{$l(A)$}\quad \in\B.
$$
Here $t(A),\;b(A) \in \Hc$ and $l(A),\;r(A) \in \Vc$. The identity
maps will be denoted $\idd: \Vc \to \B$ and $\idd: \Hc \to \B$. The
product in the groupoid $\B$ with base $\Vc$ is called {\em
horizontal product} and is denoted by $AB$ or $\{AB\}$, for $A,B \in \B$ with
$r(A) = l(B)$. The product in the groupoid $\B$ with base $\Hc$ is
called {\em vertical product} and is denoted by $\begin{matrix}
A\\B\end{matrix}$ or $\left\{\begin{matrix}
A\\B\end{matrix}\right\}$, for $A,B \in \B$ with $b(A) =t(B)$. This
pictorial notation is useful to understand the products in the
double structure. For instance, compatibility axioms between the
horizontal and vertical products with respect to source and target maps
of the horizontal and vertical groupoid structures on $\B$, are described by

$$
\cajaMedium{$A$}{$t$}{$r$}{$b$}{$l$} \;
\cajaMedium{$B$}{$t'$}{$r'$}{$b'$}{$r$} =
\cajaMedium{$\{AB\}$}{$tt'$}{$r'$}{$bb'$}{$l$}\quad \text{ and }
\quad
\begin{matrix}
\cajaMedium{$A$}{$t$}{$r$}{$b$}{$l$} \\
\cajaMedium{$B$}{$b$}{$r'$}{$b'$}{$l'$}
\end{matrix} =\;
\cajaMedium{$\scriptstyle{\left\{\begin{matrix} A
\\B\end{matrix}\right\}}$}{$t$}{$rr'$}{$b'$}{$ll'$}\quad.
$$
We omit the letter inside the box if no confusion arises. We also
write $A^h$  and $A^v$ to denote the inverse of $A\in \B$ with
respect to the horizontal and vertical structures of  groupoid over
$\B$ respectively. When one of the sides of a box is an identity, we
draw this side as a double edge. For example, if
$t(A) = \id_p$, we draw \begin{tabular}{|p{0,1cm}|} \hhline{|=|} \\
\hline\end{tabular}  and say that  $t(A) \in \Pc$.

\noindent \textbf{The interchange law:} The most important axiom of double groupoids is undoubtedly the \emph{interchange law}, 
it states that
\begin{equation}\label{interchange_law}
\begin{matrix} 
\left\{ K L \right\} \vspace{-2pt}\\ \left\{ M N \right\} 
\end{matrix} 
=
\left\{ 
\begin{matrix} K  \vspace{-4pt} \\ M   
\end{matrix}
\right\}
\left\{
\begin{matrix} 
L \vspace{-4pt}\\ N  
\end{matrix}
\right\},
\end{equation}
when the four boxed are compatible i.e, when all the compositions involved
are well defined.

\begin{definition}
A double groupoid is a \textit{topological double groupoid} if all the
four groupoids involved are topological groupoids and the \textit{top-right
corner map}
\begin{equation}\label{t-r corner map}
\urcorner: \B \to \Hc \pfibrado{l}{t} \Vc, \qquad A \mapsto
\urcorner(A) = (t(A), r(A)),
\end{equation}
is a continuous surjective map.
\end{definition}

We shall say that a double groupoid is \emph{discrete} if no differentiable or topological structure is present.
A  discrete double groupoid satisfies the \textit{filling condition}
if the the \textit{top-right corner map} defined in \eqref{t-r corner map}
is surjective. We refer the reader to \cite{AN3} for more details on discrete
double groupoids and the filling condition.

Let $(\B;\Vc,\Hc;\Pc)$ be a double groupoid, if $P \in \Pc$, we denote  $  \Theta_P := \idd \circ \id(P)$.

\begin{definition}[Brown and Mackenzie] Let $(\B;\Vc,\Hc;\Pc)$ be a double groupoid. The core
groupoid $\E(\B)$ of $\B$  is the set
$$\E(\B)=\{E \in \B : \; t(E), \;
r(E) \in \Pc\}$$ 
with source and target projections
$s_{{}_\E}$, $e_{{}_\E}: \E(\B) \to \Pc$, given by
$s_{{}_\E}(E)=bl(E)$ and $e_{{}_\E}(E)= tr(E)$ respectively;
identity map given by $\Theta_P$; multiplication and inverse given by
\begin{equation}\label{productcore} E \circ F : =
\left\{\begin{matrix} {\scriptstyle \idd l(F)} & F\\
E & {\scriptstyle \idd(b(F)) \vspace{-1pt}} \end{matrix} \right\},
\qquad E^{(-1)}: = (E \iddv b(E)^{-1})^v
= \left\{\begin{matrix}\iddv l(E)^{-1} \vspace{-4pt}\\
E^h\end{matrix} \right\},
\end{equation}
for every compatible $E , F\in \E(\B)$. 

We observe that
the elements of $\E(\B)$ are of the form $E =
\begin{tabular}{|p{0,1cm}||} \hhline{|=||} \\ \hline \end{tabular}
\;$; the source gives the bottom-left vertex and the target  gives the
top-right vertex of the box. 
\end{definition}

\begin{remark}
It is clear that if $(\B;\Vc,\Hc;\Pc)$ is a topological double groupoid then its core groupoid also is. Moreover, in the case of double Lie groupoids, the definition request that the top-right corner map be a surjective submersion, then in this case the
core groupoid $\E(\B)$ is a closed embedded submanifold of $\B$ and clearly  $s_{{}_\E}$ and $e_{{}_\E}$ are
surjective submersions. With these structural maps
$\E(\B)$ becomes a Lie groupoid with base $\Pc$ (differentiability conditions being easily verified because $\E(\B)$
is an embedded submanifold of $\B$).
\end{remark}

Another important invariant of a double groupoid is the intersection
$\Kc(\B)$ of all four core groupoids:
\begin{align*} \Kc(\B) & : = \{K \in
\B: \; t(K), b(K),l(K), r(K)  \in \Pc \}.\end{align*}
Thus a box is in $\Kc(\B)$ if and only if it is  of the form \begin{tabular}{||p{0,1cm}||} \hhline{|=|} \\
\hhline{|=|}\end{tabular}. 
Let $p: \Kc(\B) \to \Pc$ be the `common
vertex' function, say $p(K) = lb(K)$. For any $P\in \Kc(\B)$, let
$\Kc(\B)_P$ be the fiber  at $P$; $\Kc(\B)_P$ is an abelian group under
vertical composition, that coincides with horizontal composition. This
is just the well-known fact: ``a double group is the same as an
abelian group". Indeed, apply the interchange law
\begin{equation*}
\begin{matrix} (K L) \vspace{-2pt}\\ (M N) \end{matrix} =
\left(\begin{matrix} K  \vspace{-4pt}\\ M   \end{matrix}\right)
\left(\begin{matrix} L \vspace{-4pt}\\ N  \end{matrix}\right)
\end{equation*}
to four boxes $K,L,M,N \in \Kc_P$: if $L = M = \Theta_P$, this
says that $\begin{matrix} K  \vspace{-4pt}\\ N \end{matrix}  = KN$
and the two operations coincide.  If, instead, $K = N = \Theta_P$,
 this says that $\begin{matrix} L  \vspace{-4pt}\\ M \end{matrix}
= ML$, hence the composition is abelian. Note that this operation
in $\Kc(\B)$ coincides also with the core multiplication
\eqref{productcore}. In short, $\Kc(\B)$ is an abelian group bundle
over $\Pc$.

Once an algebraic structure is introduced the following natural task
is to study the class of objects on which they act. There are
several notion of \emph{modules} for double groupoids \cite{aa, BM},
here we introduce a new one implicit in \cite{AN3}.

\begin{definition}\label{double groupoid left action}
Let $\T = (\F; \Vc, \Hc; \Pc)$ be a double groupoid and let $\epsilon: \Ec \to \Pc$
be a map. We say that {\em $\mathcal{T}$ acts on the left along $\epsilon$}
if the following conditions holds
\begin{enumerate}
\item The groupoid $\xymatrix{ \Vc \ar@<-2pt>[r]_r \ar@<2pt>[r]^l & \Pc}$
acts on $\Ec \to \Pc $ on the left,
\item The groupoid $\xymatrix{ \Hc \ar@<-2pt>[r]_b \ar@<2pt>[r]^t & \Pc}$
acts on $\Ec \to \Pc $ on the left,
\item for any box $A \in \F$ the following relation holds
\begin{equation}\label{acciondoble}
l(A)^{-1}\cdot(t(A)\cdot X) = b(A)\cdot (r(A)^{-1}\cdot X)
\end{equation}
for any $X$ where the action is defined.
\end{enumerate}
\end{definition}

\begin{remark}
From the above definition it is easy to see that
any module over a double groupoid is, at the same time,
a module over the core groupoid associated to it.

\noindent The notion of action introduced here, 
differs substantially from the previous ones introduced by Brown and Mackenzie \cite{BM} 
and by Andruskiewitsch and Aguiar \cite{aa}. Nevertheless it is good enough to obtain 
meaningful information of a double groupoid  from 
its \textit{representation category}. We study systematically this concept 
of representation in a forthcoming paper	.
\end{remark}

\begin{exa}
Given a double groupoid $(\F; \Hc, \Vc;\Pc)$, the vertical and the horizontal groupoids 
$\Vc$ and $\Hc$ acts on $\Kc(\B)$ by vertical, respectively horizontal, conjugation:
\begin{align}
\label{accionver}  \text{If } g\in \Vc(Q,P) \text{ and } A\in \Kc_P \text{ then }
g\cdot A &:=  \begin{matrix} \id g\vspace{-1pt}\\A  \vspace{-4pt}\\ \id g^{-1} \end{matrix}
\in \Kc_Q;&
\\
\label{accionhor} \text{if } x\in \Hc(Q,P) \text{ and } A\in \Kc_P \text{ then }
x\cdot A &:=  \id xA\id x^{-1}\in \Kc_Q;&
\end{align}
we can note that both actions are by group bundle automorphisms. This maps define an 
action of the double groupoid $\F$ on the abelian group bundle $\Kc(\F)$ associated to it.
\end{exa}

\begin{remark}
In an analogous way to \ref{double groupoid left action} we can define a right action of a double groupoid
along a map $\epsilon: \Ec \to \Pc$. The conditions $(1)$ and $(2)$ in
definition \ref{double groupoid left action} suffers the obvious modifications
and the condition three is changed by
\begin{equation}
(X \cdot l(A)) \cdot t(A)^{-1} = (X \cdot b(A)^{-1})\cdot r(A),
\end{equation}
whenever $\epsilon(X) = bl(A) = lb(A)$.
\end{remark}

\end{section}

\begin{section}{Cohomology of double groupoids}\label{discrete_cohomology_section}
Let $\T = (\F; \Vc, \Hc; \Pc)$ be a double groupoid and $p: \Kc(\F) \to \Pc$ be the associated abelian group bundle. We also denote $\Kc = \Kc(\F)$.

\vskip .3cm

\subsection{The double complex associated to a Double Groupoid}

Let $\{ \mathcal{F}^{(m,n)} \}$ be  the family of sets defined by	
\begin{align}\label{bsp-set-db-gpd}
\F^{(0,0)} &:= \Pc, \nonumber\\
\F^{(0,s)} &:=  \left\{\left(x_1, \dots, x_s\right) \in  \Hc^{\, s}:
x_1\vert x_2 \dots \vert x_s \right\} = \Hc^{(s)}, \quad s> 0, \nonumber\\
\F^{(r,0)} &:= \left\{\left(g_1, \dots, g_r\right) \in\Vc^{\, r}:
g_1\vert g_2 \dots \vert g_r\right\} = \Vc^{(r)}, \quad r> 0, \nonumber\\
\F^{(r,s)} &:= \left\{\left(\begin{tabular}{p{0,8cm} p{0,8cm} p{0,8cm} p{0,8cm}}
$A_{11}$ & $ A_{12}$ & \dots & $A_{1s}$ \\
$A_{21}$ & $ A_{22}$ & \dots & $A_{2s}$ \\
\dots & \dots & \dots & \dots \\
$A_{r1}$ & $ A_{r2}$ & \dots & $A_{rs}$  \end{tabular}\right) \in
\F^{r\times s}: \quad
\begin{tabular}{p{0,8cm}|p{0,8cm}|p{0,8cm}|p{0,8cm}}
$A_{11}$ & $ A_{12}$ & \dots & $A_{1s}$ \\ \hline
$A_{21}$ & $ A_{22}$ & \dots & $A_{2s}$ \\ \hline
\dots & \dots & \dots & \dots \\ \hline
$A_{r1}$ & $ A_{r2}$ & \dots & $A_{rs}$  \end{tabular} \right\}
\; r,s> 0;
\end{align}
that is, $\mathcal{F}^{(m,,n)}$ is the set of matrices of
composable boxes of $\F$ of size $m \times n$.

We define $D^{\cdot, \cdot} = D^{\cdot, \cdot}(\T, \Kc)$ in the following way: let $r,s>0$, then
\begin{align*}
  D^{0,0}&:= \left\{ \alpha: \Pc \to \Kc \; : \; p \circ \alpha = Id_{\Pc} \right\} \\
  D^{r,0}&:= \left\{ \alpha: \Vc^{(r)} \to \Kc\; : \; p \circ \alpha (f) = b (f_r)\text{, }p \circ \alpha (r(A_{11}),\cdots,r(A_{r1})) = br(A_{11})\text{ and }\alpha\text{ is normalized}\right\},	\\
  D^{0,s}&:= \left\{ \alpha: \Hc^{(s)} \to \Kc\; : \; p \circ \alpha (x) = l (x_1)\text{, }p \circ \alpha (t(A_{11}),\cdots,t(A_{1s})) = tl(A_{11})\text{ and }\alpha\text{ is normalized}\right\},	\\
  D^{r,s}&:= \left\{ \alpha: \F^{(r,s)}\to \Kc\; : \; p \circ \alpha(A) = bl(A_{r1})\text{ and } \alpha \text{ is normalized} \right\}.
\end{align*}
\begin{remark}
Recall that if $r,s \ge 0$, $\alpha$ is normalized if $\alpha(A) = 0$ (i.e $=\Theta_P$, for some $P\in \Pc$) when
\begin{align*}
 &r > 1, s>0  \text{ and } A_{ij} \in \Vc, \text{ some } i,j \text{, or  }\\
 &r > 1, s = 0  \text{ and } A_{i0} \in \Pc, \text{ some } i \text{, or  } \\
\ &r>0, s > 1  \text{ and } A_{ij} \in \Hc, \text{ some } i,j \text{, or  }\\
 &r =0, s > 1  \text{ and } A_{0j} \in \Pc, \text{ some } j.
\end{align*}
\end{remark}

In particular we have
\begin{equation*}
D^{1, 2}  = \left\{\alpha: \F^{(1,2)} \to \Kc | \; (p \circ \alpha)(A_{11},A_{12})
= bl(A_{11}),  \; \alpha(A_{11},A_{12}) = 0,  \text{ if } A_{11} \text{ or } A_{12} \in \Hc
\right\}.
\end{equation*}
and
\begin{equation*}
D^{2, 1}  = \left\{\alpha: \F^{(2,1)} \to \Kc | \;
(p \circ \alpha)\left(\begin{matrix}A_{11}\\A_{21}\end{matrix}\right)
= bl(A_{21}),  \; \alpha \left(\begin{matrix}A_{11}\\A_{21}\end{matrix}\right) = 0,
\text{ if } A_{11}  \text{ or }  A_{12} \in \Vc \right\}.
\end{equation*}

Let $d_H = d_H^{r,s}: D^{r, s} \to D^{r, s+1}$ and
$d_V = d_V^{r,s}: D^{r, s} \to D^{r+1, s}$
be, respectively, the horizontal and vertical coboundary
maps defined as follows:

\begin{itemize}
\item If  $r =0$, $d_H$ is

\begin{equation}\label{cob1}
\begin{aligned}d_H^{0,0} \alpha(x) &= x \cdot \alpha(r(x)) - \alpha(l(x)), \\
d_H^{0,s} \alpha(x_1, \dots, x_{s+1}) &=  x_1 \cdot \alpha(x_2, \dots, x_{s+1})
+ \sum_{1\le i \le s}(-1)^{i} \alpha(x_1, \dots, x_ix_{i+1}, \dots, x_{s+1})
\\ & \qquad + (-1)^{s+1} \alpha(x_1, \dots, x_{s}).
\end{aligned}
\end{equation}

\item if  $s =0$, $d_V$ is
\begin{equation}\label{cob2}
\begin{aligned}d_V^{0,0} \alpha(x) &= \alpha(b(x)) - x^{-1}\cdot \alpha(t(x)), \\
d_V^{r,0} \alpha(x_1, \dots, x_{r+1}) &=  \alpha(x_2, \dots, x_{r+1})
+ \sum_{1\le i \le r}(-1)^{i} \alpha(x_1, \dots, x_ix_{i+1}, \dots, x_{r+1})
\\ & \qquad + (-1)^{r+1}  x_{r+1}^{-1} \cdot \alpha(x_1, \dots, x_{r})
\end{aligned}
\end{equation}

\item if  $r =0$, $s > 0$,
$$d_V^{0,s} \alpha(A_{11}, \dots, A_{1s}) =
\alpha(b(A_{11}), \dots, b(A_{1s})) - l(A_{11})^{-1} \cdot \alpha(t(A_{11}), \dots, t(A_{1s}));$$

\item if  $r >0$, $s = 0$,
$$d_H^{r,0} \alpha\begin{pmatrix}
A_{11}   \\
\vdots   \\
A_{r1}  \end{pmatrix} =
b(A_{r1}) \cdot \alpha(r(A_{11}), \dots, r(A_{r1})) - \alpha(l(A_{11}), \dots, l(A_{r1}));$$

\item if $r > 0$ and $s = 1$, 
\begin{equation*}
d_V^{r,1} f\begin{pmatrix}
A_{11}  \\
\vdots   \\
A_{r1}  \\
A_{r+1,1}
\end{pmatrix} =
f\begin{pmatrix}
A_{21}   \\
\vdots   \\
A_{r1}  \\
A_{r+1,1}
\end{pmatrix}
+ \sum_{1\le i \le r}(-1)^{i} f\begin{pmatrix}
A_{11}  \\
\vdots  \\
\left\{\begin{matrix}A_{i1} \\A_{i+1,1}\end{matrix}\right\} \\
\vdots  \\
A_{r+1,1}
\end{pmatrix}
 + (-1)^{r+1} l(A_{r+1,1})^{-1} \cdot f\begin{pmatrix}
A_{11} \\
\vdots \\
A_{r1}  \end{pmatrix};
\end{equation*}

\item if $r = 1$ and $s > 0$, 
\begin{align*}
d_H^{1,s} f\begin{pmatrix}
A_{11}  & \dots  & A_{1,s+1}\end{pmatrix} &=
b(A_{11}) \cdot f\begin{pmatrix}
A_{12}  & \dots & A_{1,s+1}
\end{pmatrix}
\\ & \qquad + \sum_{1\le j \le s}(-1)^{j}
f\begin{pmatrix}
A_{11}  & \dots & \left\{A_{1,j} A_{1,j+1} \right\}& \dots & A_{1,s+1}
\end{pmatrix}
\\ & \qquad + (-1)^{s+1} f\begin{pmatrix}
A_{11}  & \dots & A_{1s}  \end{pmatrix}.
\end{align*}

\item More generally if $r > 0$ and $s > 0$,
\begin{align*}
d_V^{r,s} f\begin{pmatrix}
A_{11}  & \dots & A_{1s} \\
\dots  & \dots & \dots \\
A_{r1}  & \dots & A_{rs} \\
A_{r+1,1}  & \dots & A_{r+1,s}
\end{pmatrix} &=
 f\begin{pmatrix}
A_{21}  & \dots & A_{2s} \\
\dots  & \dots & \dots \\
A_{r1}  & \dots & A_{rs} \\
A_{r+1,1}  & \dots & A_{r+1,s}
\end{pmatrix}
\\ & \qquad + \sum_{1\le i \le r}(-1)^{i} f\begin{pmatrix}
A_{11}  & \dots & A_{1s} \\
\dots  & \dots & \dots \\
\left\{\begin{matrix}A_{i1} \\A_{i+1,1}\end{matrix}\right\}
& \dots & \left\{\begin{matrix}A_{is} \\A_{i+1,s}\end{matrix}\right\} \\
\dots  & \dots & \dots \\
A_{r+1,1}  & \dots & A_{r+1,s}
\end{pmatrix}
\\ & \qquad + (-1)^{r+1} l(A_{r+1,1})^{-1} \cdot f\begin{pmatrix}
A_{11}  & \dots & A_{1s} \\
\dots  & \dots & \dots \\
A_{r1}  & \dots & A_{rs} \end{pmatrix};
\end{align*}

\begin{align*}
d_H^{r,s} f\begin{pmatrix}
A_{11}  & \dots  & A_{1,s+1} \\
\dots  & \dots & \dots \\
A_{r1}  & \dots & A_{r,s+1} \end{pmatrix} &=
b(A_{r1})  \cdot f\begin{pmatrix}
A_{12}  & \dots & A_{1,s+1} \\
\dots  & \dots & \dots \\
A_{r2}  & \dots & A_{r,s+1}
\end{pmatrix}
\\ & \qquad + \sum_{1\le j \le s}(-1)^{j}
f\begin{pmatrix}
A_{11}  & \dots & \left\{A_{1,j} A_{1,j+1} \right\}& \dots & A_{1,s+1} \\
\dots  & \dots& \dots& \dots & \dots \\
A_{r,1}  & \dots& \left\{A_{r,j}A_{1,j+1} \right\}& \dots & A_{r,s+1}
\end{pmatrix}
\\ & \qquad + (-1)^{s+1} f\begin{pmatrix}
A_{11}  & \dots & A_{1s} \\
\dots  & \dots & \dots \\
A_{r1}  & \dots & A_{rs} \end{pmatrix}.
\end{align*}
\end{itemize}
\vskip .5cm
A lengthy but straightforward computation shows that the following diagram
commutes
$$
\begin{CD}
D^{r+1, s} @>{d_H}>> D^{r+1, s+1}
\\
@AA{d_V}A @AA{d_V}A \\
D^{r, s} @>{d_H}>> D^{r, s+1}
\end{CD}
$$

Thus, using the usual ``sign trick'', we have constructed a double cochain complex

\begin{center}
\begin{equation}\label{totalcomplex}
D^{\cdot\cdot}(\T) = \qquad
\begin{CD}
\vdots \\
@AAA \\
D^{2, 0} @>{d_H}>> \hspace{18pt}\raisebox{1.0ex}{\vdots}\cdots \\
@AA{d_V}A @AA{-d_V}A \\
D^{1, 0} @>{d_H}>> D^{1, 1} @>{d_H}>>
\hspace{18pt}\raisebox{1.0ex}{\vdots}\cdots \\
@AA{d_V}A @AA{-d_V}A @AA{d_V}A \vspace{9pt}\\
D^{0, 0} @>{d_H}>> D^{0, 1} @>{d_H}>>
D^{0, 2} @>>> \cdots\ \ .
\end{CD}
\end{equation}
\end{center}

\vskip .5cm

If we remove the edges of this double complex we obtain another
complex by setting $\A^{r, s}(\T):= \A^{r,s} := D^{r+1,s+1}$ and defining
$$
d_v^{r,s} := (-1)^{s} d_V^{r+1,s+1} \quad \text{and} \quad d_h^{r,s} :=  d_H^{r+1,s+1},
$$
with the ``sign trick", we have $d_v\circ d_h + d_h\circ d_v =0$.
\begin{center}
\begin{equation}\label{complexdoublecohomology}
\xymatrix{
&\vdots&\\
&A^{2,0} \ar[u]^{d_v^{2,0}} \ar[r]_{d_h^{2,1}}& A^{2,1} \\
\A^{\cdot \cdot}(\T) := &A^{1,0} \ar[u]^{d_v^{1,0}} \ar[r]_{d_h^{1,0}} & A^{1,1}\ar[r]_{d_h^{1,1}} \ar[u]^{d_v^{1,2}}& A^{1,2}\\
&A^{0,0} \ar[u]^{d_v^{0,0}}\ar[r]_{d_h^{0,0}} & A^{0,1}\ar[u]^{d_v^{0,1}}
\ar[r]_{d_h^{0,1}} & A^{0,2} \ar[r]_{d_h^{0,2}}\ar[u]^{d_v^{0,2}}& \cdots
}
\end{equation}
\end{center}
\end{section}
\vskip 1cm

Finally, we denote by $E^{\cdot \cdot}(\T)$ the double complex consisting only of the edges of $D^{\cdot \cdot}(\T)$.

\begin{section}{Extensions of double groupoids by abelian group bundles}\label{seccion4}

\begin{definition}
Let $(\B; \Vc, \Hc; \Pc)$ and $(\F; \Vc, \Hc; \Pc)$ be
two double groupoids and let $\Pi: \B \to \F$ be a
morphism of double groupoids. The \emph{double kernel}
of $\Pi$ is the set
$$Ker(\Pi) = \{ B \in \B\;:\; \Pi(B) = \Theta_p,\;\text{for some }\; p \in \Pc \}$$
\end{definition}

\begin{lemma}
Let $(\B; \Vc, \Hc; \Pc)$ and $(\F; \Vc, \Hc; \Pc)$ be
two double groupoids and let $\Pi: \B \to \F$ be a
morphism of double groupoids. Then the \emph{double kernel}
of $\Pi$ is contained in the abelian group bundle associated
to $\B$ and is, therefore, an abelian group bundle itself.
\end{lemma}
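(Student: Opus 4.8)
The plan is to show that if $B \in \mathrm{Ker}(\Pi)$, then $B$ lies in $\Kc(\B)$, the abelian group bundle associated to $\B$. By definition, $B \in \mathrm{Ker}(\Pi)$ means $\Pi(B) = \Theta_p$ for some $p \in \Pc$. First I would use that $\Pi$ is a morphism of double groupoids over the \emph{fixed} common base $\Pc$ and that it is compatible with all four structure maps. In particular, $\Pi$ commutes with the side maps $t, b, l, r$ and with the corresponding identity inclusions $\idd: \Vc \to \B$ and $\idd: \Hc \to \B$. Since $\Theta_p = \idd \circ \id(p)$ is the double identity at $p$, its four sides satisfy $t(\Theta_p) = b(\Theta_p) = \id(p) \in \Hc$ and $l(\Theta_p) = r(\Theta_p) = \id(p) \in \Vc$, all of which are identities sitting over the single point $p \in \Pc$.

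The key step is to transport this side information back to $B$. Because $\Pi$ is a morphism of groupoids for, say, the horizontal structure (base $\Vc$), it intertwines the maps $t, b: \B \to \Hc$ with the corresponding maps on $\F$; and since the induced map on the side groupoids is the identity on the common base $\Pc$ and respects the groupoid structure, I would argue that $t(B)$ and $b(B)$ must themselves be identities, i.e. lie in $\Pc \subseteq \Hc$. Concretely, the sides $t(\Pi(B)) = \Pi_{\Hc}(t(B))$ equal $\id(p)$, and using that $\Pi_{\Hc}$ and $\Pi_{\Vc}$ restrict to the identity morphisms on the fixed groupoids $\Hc$ and $\Vc$ (this is what ``two double groupoids with the \emph{same} $\Vc, \Hc, \Pc$'' forces for the morphism to make sense), one deduces $t(B) = \id(p)$ and similarly $b(B), l(B), r(B) \in \Pc$. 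Thus all four sides of $B$ are identities over a single point, which is exactly the defining condition
$$
\Kc(\B) = \{K \in \B : t(K), b(K), l(K), r(K) \in \Pc\}
$$
of the intersection of the four core groupoids. Hence $B \in \Kc(\B)$.

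Once containment $\mathrm{Ker}(\Pi) \subseteq \Kc(\B)$ is established, the remaining assertion — that $\mathrm{Ker}(\Pi)$ is itself an abelian group bundle — follows by checking that $\mathrm{Ker}(\Pi)$ is closed under the (vertical, equivalently horizontal) composition and inversion that make $\Kc(\B)$ an abelian group bundle over $\Pc$. This is where I would invoke that $\Pi$ is a morphism for both groupoid structures: if $\Pi(B) = \Theta_p$ and $\Pi(B') = \Theta_p$ with $B, B'$ composable, then $\Pi$ of their product is $\Theta_p \cdot \Theta_p = \Theta_p$, so the product is again in the kernel, and likewise inverses are preserved since $\Pi(B^{-1}) = \Pi(B)^{-1} = \Theta_p$. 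Restricted to a fiber over a point $p$, $\mathrm{Ker}(\Pi)$ is therefore a subgroup of the abelian group $\Kc(\B)_p$, and fiberwise these subgroups assemble into a sub-bundle over $\Pc$.

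I expect the main obstacle to be the first part: rigorously arguing that $\Pi(B)$ being a double identity forces \emph{all four} sides of $B$ to be identities, rather than merely mapping to identities under $\Pi$. This rests on the precise convention that a morphism of double groupoids over the fixed triple $(\Vc, \Hc, \Pc)$ induces the identity on each of the side groupoids, so that the side maps are genuinely reflected and not just preserved. Making this convention explicit — and confirming it matches the definition of morphism used in \cite{AN2} — is the delicate point; the abelian-group-bundle conclusion is then a formal consequence of the interchange law argument already carried out in the preliminaries for $\Kc(\B)$.
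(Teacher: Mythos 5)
Your proposal is correct: the paper itself gives no argument (its proof reads only ``Straightforward''), and what you write is exactly the intended straightforward argument --- since $\Pi$ is a morphism over the fixed triple $(\Vc,\Hc;\Pc)$ it induces the identity on the side groupoids, so $t(B)=t(\Pi(B))=\id_p$ and likewise for $b,l,r$, forcing $\mathrm{Ker}(\Pi)\subseteq\Kc(\B)$, after which closure under composition, inverses and identities makes the kernel a sub-group-bundle of $\Kc(\B)$. You also correctly isolate the only delicate point, namely that morphisms must be over the identity on $\Vc$, $\Hc$ and $\Pc$ (otherwise, e.g., the trivial map collapsing every box to $\Theta_p$ would have kernel all of $\B$); this is indeed the paper's standing convention, made explicit immediately after the lemma.
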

\begin{proof}
Straightforward.
\end{proof}
\begin{definition}
Let $(\F; \Vc, \Hc; \Pc)$ be a double groupoid and
let $p: \Kc \to \Pc$ be any abelian group bundle.
We say that a double groupoid $(\B; \Vc, \Hc; \Pc)$ is
an extension of $\F$ by $\Kc$ if there is an epimorphism of
double groupoids $\Pi: \B \to \F$ such that
$Ker(\Pi) = \Kc$.

If $(\B; \Vc, \Hc; \Pc)$ is an extension of
$(\F; \Vc, \Hc; \Pc)$ by $p: \Kc \to \Pc$ then we write
$$1 \to \Kc \hookrightarrow \B \twoheadrightarrow \F \to 1$$

\end{definition}
As is usual, once the concept of extensions of some
algebraic structure is defined,  we will to introduce a series
of definition related to it. From now on we will work with double
groupoids over fixed lateral groupoids $\Vc$ and $\Hc$.
\begin{definition}
Let $\B_1$ and $\B_2$ be two double groupoid extensions of
the double groupoid $\F$ by an abelian group bundle $p: \Kc \to \Pc$.
We say that $\B_1$ and $\B_2$ are \emph{isomorphic} if there is a triple
$(\phi, \Phi, \Psi)$ where $\phi: \Kc \to \Kc$ is a morphism of
abelian group bundles and, $\Phi: \B_1 \to \B_2$ and $\Psi: \F \to \F$
are isomorphisms  of double groupoids such that the following diagram
is commutative
\begin{equation}\label{isomorphic extensions}
\xymatrix{
1 \ar[r] & \ar @{} [dr] |{\circlearrowleft} \Kc \ar@{^{(}->}[r]^\iota \ar[d]_\phi &
\ar @{} [dr] |{\circlearrowleft} \B_1 \ar@{>>}[r]^{\Pi_1} \ar[d]_\Phi
& \F \ar[r] \ar[d]_\Psi & 1 \\
1 \ar[r] & \Kc \ar@{^{(}->}[r]_\iota & \B_2 \ar@{>>}[r]_{\Pi_2} & \F \ar[r] & 1
}
\end{equation}

We say that the two extensions $\B_1$ and $\B_2$ are
\emph{equivalent} if in the diagram \eqref{isomorphic extensions}
the morphisms $\phi$ and $\Psi$ are identities. That is,
if the following diagram is commutative
\begin{equation}
\xymatrix{
& & \B_1 \ar@{}[ddr] |{\circlearrowleft} \ar@{}[ddl] |{\circlearrowright} \ar[dd]_\Phi
\ar@{>>}[dr]^{\Pi_1}&&\\
1 \ar[r] & \Kc \ar@{^{(}->}[dr]_\iota \ar@{^{(}->}[ur]^\iota & &
\F \ar[r] & 1\\
& & \B_2 \ar@{>>}[ur]_{\Pi_2}&&
}
\end{equation}

\end{definition}

\begin{definition}
Let $(\F; \Vc, \Hc; \Pc)$ be a double groupoid and
let $p: \Kc \to \Pc$ be any abelian group bundle.
The set of equivalence classes of extensions of
double groupoids of $\F$ by $\Kc$ will be
denoted by $\mathcal{O}pext(\F,\Kc)$.
\end{definition}
In the rest of the section we will to give a cohomological
description of $\mathcal{O}pext(\F,\Kc)$.

\subsection{The total complex associated to a double groupoid}

We will to recall the definition of the total complex
$Tot(A^{\cdot\cdot})$ associated to the double complex
$ A^{\cdot\cdot}$ defined in above section.
Let
$$
\Tot( A^{\cdot\cdot})^n = \bigoplus_{p+q=n}  A^{p,q}.
$$
Then
$$
d^n: \Tot( A^{\cdot\cdot})^n  \to \Tot( A^{\cdot\cdot})^{n+1} \quad
\text{given by}\quad d =  d_h +  d_v ,
$$
define a map such that $d \circ d =0$.
Then $(\Tot( A^{\cdot\cdot}),d)$ is a cochain complex.
\begin{definition}\label{discrete_double_groupoid_cohomology}
We define the {\em total $n$-cohomology of the double groupoid
$\T$ with coefficients in $\Kc$}, as the $n$-cohomology group
\begin{equation}
\Ho^n_{\Tot} (\T,\Kc) := \Ho^n(\Tot( \A^{\cdot\cdot})).
\end{equation}
of the total complex $\A^{\cdot \cdot}(\T,\Kc)$.
\end{definition}
\begin{lemma}
The set $H^0(\T, \Kc)$ of $0$-cocycles is the set of maps
$$
\left\lbrace \alpha: \F \to \Kc\; : \;\alpha\left\lbrace
\begin{matrix}
A_{11}\\A_{21}
\end{matrix}
\right\rbrace = \alpha (A_{21}) + l(A_{21})^{-1} \cdot \alpha(A_{11})
\;\text{and}
\;\alpha(\{ A_{11} A_{12} \}) = b(A_{11})\cdot \alpha(A_{12})
+\alpha(A_{11})
\right\rbrace
$$
\end{lemma}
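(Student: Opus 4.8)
The plan is to unwind the definition of the total complex in its two lowest degrees and then read off the cocycle condition directly from the explicit coboundary formulas. Since all indices satisfy $p,q\ge 0$, the degree-zero piece of the total complex is
\[
\Tot(\A^{\cdot\cdot})^0 = \A^{0,0} = D^{1,1},
\]
which by definition consists of the maps $\alpha\colon \F^{(1,1)}\to\Kc$ with $p\circ\alpha(A)=bl(A)$. As $\F^{(1,1)}=\F$ and no normalization constraint is imposed when $r=s=1$, these are exactly the maps $\alpha\colon\F\to\Kc$ with $\alpha(A)\in\Kc_{bl(A)}$. Likewise the degree-one piece is
\[
\Tot(\A^{\cdot\cdot})^1 = \A^{1,0}\oplus\A^{0,1} = D^{2,1}\oplus D^{1,2}.
\]
Because there is nothing in negative degree, $H^0(\T,\Kc)=\ker d^0$, so the whole task reduces to describing this kernel.

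First I would observe that the degree-zero differential is $d^0=d_v^{0,0}+d_h^{0,0}$, landing in $D^{2,1}\oplus D^{1,2}$; hence $\alpha$ is a $0$-cocycle if and only if its two components vanish separately, namely $d_v^{0,0}\alpha=0$ in $D^{2,1}$ and $d_h^{0,0}\alpha=0$ in $D^{1,2}$. By the reindexing conventions $\A^{r,s}=D^{r+1,s+1}$, $d_h^{r,s}=d_H^{r+1,s+1}$ and $d_v^{r,s}=(-1)^{s}d_V^{r+1,s+1}$, these two components are computed from $d_H^{1,1}$ and, since $(-1)^{0}=1$, from $d_V^{1,1}$.

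Then I would specialize the displayed formula for $d_H^{1,s}$ at $s=1$ and the formula for $d_V^{r,1}$ at $r=1$. The horizontal one gives
\[
d_H^{1,1}\alpha(A_{11},A_{12}) = b(A_{11})\cdot\alpha(A_{12}) - \alpha(\{A_{11}A_{12}\}) + \alpha(A_{11}),
\]
whose vanishing is precisely $\alpha(\{A_{11}A_{12}\}) = b(A_{11})\cdot\alpha(A_{12})+\alpha(A_{11})$, the second relation in the statement. The vertical one gives
\[
d_V^{1,1}\alpha\begin{pmatrix}A_{11}\\ A_{21}\end{pmatrix} = \alpha(A_{21}) - \alpha\!\left\{\begin{matrix}A_{11}\\ A_{21}\end{matrix}\right\} + l(A_{21})^{-1}\cdot\alpha(A_{11}),
\]
whose vanishing is precisely the first relation $\alpha\{{}^{A_{11}}_{\;A_{21}}\}=\alpha(A_{21})+l(A_{21})^{-1}\cdot\alpha(A_{11})$. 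Thus the $0$-cocycles are exactly the maps satisfying both displayed identities, which is the asserted description.

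There is no genuine obstacle here: the content is entirely bookkeeping, and the only points requiring care are (i) tracking the index shift $\A^{r,s}=D^{r+1,s+1}$, so that the degree-zero differential is read off from $d_H^{1,1}$ and $d_V^{1,1}$ rather than from any $d_H^{0,0}$ or $d_V^{0,0}$, and (ii) verifying that the sign $(-1)^{s}$ attached to $d_v^{r,s}$ is trivial at $s=0$, so that no spurious sign enters the vertical relation. Once these are checked, the two vanishing conditions coincide verbatim with the relations in the statement.
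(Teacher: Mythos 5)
Your proposal is correct, and it is exactly the argument the paper intends: the paper's own proof is simply ``Straightforward,'' i.e.\ the unwinding of the identifications $\Tot(\A^{\cdot\cdot})^0=\A^{0,0}=D^{1,1}$, $d^0=d_v^{0,0}+d_h^{0,0}$, and the specializations of $d_V^{1,1}$ and $d_H^{1,1}$ that you carry out, including the two bookkeeping points (the index shift $\A^{r,s}=D^{r+1,s+1}$ and the triviality of the sign $(-1)^{s}$ at $s=0$). Nothing is missing; your write-up just makes explicit what the paper leaves to the reader.
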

\begin{proof}
Straightforward.
\end{proof}
\begin{remark}
If the actions of  $\Hc$ and $\Vc$ on $\Kc$ are trivial, then
the \emph{$0$-cohomology} of the double groupoid $\T$ is the set
of functions from $\F$ to $\Kc$ that are \textit{morphisms of
groupoids}	 with respect to the horizontal and vertical groupoid
structures on $\F$.
\end{remark}

\begin{proposition}
Let $p: \Kc \to \Pc$ be a $\T$ module. There is an exact sequence
\begin{equation}\label{kes-formulagral}
\begin{aligned}
0 &\to  \Ho^1(\Tot( D^{\cdot\cdot}))\to \Ho^1(\Hc,\Kc) \oplus \Ho^1(\Vc,\Kc) \to
\Ho^0_{\Tot} (\T,\Kc) \\
&\to  \Ho^2(\Tot( D^{\cdot\cdot}))\to \Ho^2(\Hc, \Kc) \oplus \Ho^2(\Vc, \Kc) \to
\Ho^1_{\Tot} (\T,\Kc)\\
&\to \Ho^3(\Tot( D^{\cdot\cdot}))\to \Ho^3(\Hc, \Kc) \oplus \Ho^3(\Vc, \Kc)  \to \cdots \end{aligned}
\end{equation}
\end{proposition}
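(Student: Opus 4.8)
The plan is to obtain \eqref{kes-formulagral} as the long exact cohomology sequence of a short exact sequence of total complexes, in which $\Tot(D^{\cdot\cdot})$ sits between its interior subcomplex $\Tot(\A^{\cdot\cdot})$ and its edge quotient $\Tot(E^{\cdot\cdot})$. First I would observe that the total differential $d=d_h+d_v$ raises the total degree and cannot leave the interior region $\{r\ge 1,\ s\ge 1\}$: both $d_H$ and $d_V$ send $D^{r,s}$ with $r,s\ge 1$ into entries with indices $\ge 1$. Hence the interior is a subcomplex of $\Tot(D^{\cdot\cdot})$, while the edges (bottom row $D^{0,\bullet}$ together with left column $D^{\bullet,0}$) form the quotient. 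Since the interior is reindexed by $\A^{r,s}=D^{r+1,s+1}$, its degree-$n$ piece is $\Tot(\A^{\cdot\cdot})^{n-2}$, so the inclusion realizes $\Tot(\A^{\cdot\cdot})$ shifted up by two degrees. I would record this as a short exact sequence of cochain complexes
\[
0 \longrightarrow \Tot(\A^{\cdot\cdot})[-2] \longrightarrow \Tot(D^{\cdot\cdot}) \longrightarrow \Tot(E^{\cdot\cdot}) \longrightarrow 0,
\]
checking that the signs imposed by the ``sign trick'' in \eqref{totalcomplex} are compatible with the inclusion and the projection.

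Next I would compute the cohomology of the quotient $\Tot(E^{\cdot\cdot})$. As a double complex, $E^{\cdot\cdot}$ has nonzero entries only along the bottom row and the left column, because the vertical differential leaving the row and the horizontal differential leaving the column both land in the interior and therefore vanish in the quotient. Consequently, away from the shared corner $D^{0,0}$, the total edge complex is the direct sum of the row complex $(D^{0,\bullet},d_H)$ and the column complex $(D^{\bullet,0},d_V)$. By the defining formulas \eqref{cob1} and \eqref{cob2}, these are precisely the standard cochain complexes computing $\Ho^\bullet(\Hc,\Kc)$ and $\Ho^\bullet(\Vc,\Kc)$, whence $\Ho^n(\Tot(E^{\cdot\cdot}))\cong \Ho^n(\Hc,\Kc)\oplus\Ho^n(\Vc,\Kc)$ for $n\ge 1$.

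With both identifications in hand I would invoke the long exact sequence of the displayed short exact sequence and substitute $\Ho^n(\Tot(\A^{\cdot\cdot})[-2])=\Ho^{n-2}(\Tot(\A^{\cdot\cdot}))=\Ho^{n-2}_{\Tot}(\T,\Kc)$ (by Definition \ref{discrete_double_groupoid_cohomology}) together with the edge identification, obtaining
\[
\cdots \to \Ho^{n-2}_{\Tot}(\T,\Kc) \to \Ho^n(\Tot(D^{\cdot\cdot})) \to \Ho^n(\Hc,\Kc)\oplus\Ho^n(\Vc,\Kc) \xrightarrow{\ \delta\ } \Ho^{n-1}_{\Tot}(\T,\Kc) \to \cdots .
\]
Because $\Ho^{-1}_{\Tot}(\T,\Kc)=0$, the term immediately preceding $\Ho^1(\Tot(D^{\cdot\cdot}))$ vanishes, so reading the sequence from $n=1$ upward produces exactly the asserted exact sequence \eqref{kes-formulagral}.

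The step I expect to be the main obstacle is the low-degree bookkeeping around the shared corner $D^{0,0}$. The total edge complex glues the row and column complexes along this single summand, so while the splitting $\Ho^n(\Hc,\Kc)\oplus\Ho^n(\Vc,\Kc)$ is immediate for $n\ge 2$, at $n=1$ the image of $D^{0,0}$ under $\alpha\mapsto(d_V\alpha,d_H\alpha)$ is only the diagonal inside the two coboundary groups, and one must verify that this does not perturb the first edge cohomology; I would control this using the normalization conditions built into the $D^{r,s}$. Equally, the connecting homomorphism $\delta$ must be identified with the map that actually appears in \eqref{kes-formulagral}: I would trace a representative through the snake lemma, lifting a pair of edge cocycles to $\Tot(D^{\cdot\cdot})$, applying $d$, and recognizing the outcome as an interior cocycle representing a class in $\Ho^{n-1}_{\Tot}(\T,\Kc)$. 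The remaining verifications—that the interior is $d$-closed and that the reindexing shift is exactly two—are routine once the sign conventions are fixed.
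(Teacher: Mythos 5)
Your proposal is correct and follows essentially the same route as the paper: the paper likewise forms the short exact sequence of cochain complexes $0 \to A^{(\cdot)} \to \Tot(D^{\cdot\cdot}) \to \Tot(E^{\cdot\cdot}) \to 0$, where $A^{(\cdot)}$ is exactly your doubly-shifted interior complex (with $A^{(0)}=A^{(1)}=0$ and $A^{(n)}=\Tot(\A^{\cdot\cdot})^{n-2}$), takes the induced long exact sequence, and identifies $\Ho^n(\Tot(E^{\cdot\cdot}))\cong \Ho^n(\Hc,\Kc)\oplus\Ho^n(\Vc,\Kc)$ for $n>0$. Your additional attention to the corner term $D^{0,0}$ and to tracing the connecting map is sound bookkeeping that the paper omits (it asserts these identifications as clear), but it does not change the argument.
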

\begin{proof}
Let $A^{(0)} = 0$, $A^{(1)} = 0$, $A^{(n)} = \Tot(A^{\cdot\cdot})^{n-2}$, for $n\ge 2$. We call $A^{(\cdot)}$ the cochain complex induced by the cochain complex $\Tot(A^{\cdot\cdot})$.  Then, we have the short exact sequence
$$
\begin{CD} 0@>>> A^{(n)} @>>> \Tot(D^{\cdot\cdot})^{n}
@>>> \Tot(E^{\cdot\cdot})^{n}@>>> 0\end{CD}$$
that induces a short exact sequence of cochain complexes
$$
\begin{CD} 0@>>> A^{(\cdot)} @>>> \Tot(D^{\cdot\cdot})
@>>> \Tot(E^{\cdot\cdot})@>>> 0\end{CD}.$$

Thus we have a long exact sequence
\begin{align*}
0 \to \Ho^0(A^{(\cdot)}) \to \Ho^0&(\Tot(D^{\cdot\cdot})) \to \Ho^0(\Tot(E^{\cdot\cdot})) \\
&\to \Ho^1(A^{(\cdot)}) \to \Ho^1(\Tot(D^{\cdot\cdot})) \to \Ho^1(\Tot(E^{\cdot\cdot})) \to \Ho^2(A^{(\cdot)}) \to \cdots.
\end{align*}
As $\Ho^0(A^{(\cdot)}) =  \Ho^1(A^{(\cdot)}) = 0$ and $ \Ho^{n}(\Tot(A^{\cdot\cdot})) = \Ho^{n+2}(A^{(\cdot)})$, we have
\begin{align*}
0 \to \Ho^1(\Tot(D^{\cdot\cdot})) \to \Ho^1(\Tot(E^{\cdot\cdot})) \to \Ho^0(\Tot(A^{\cdot\cdot})) \to  \Ho^2(\Tot( D^{\cdot\cdot})) \to \cdots
\end{align*}
and it is clear that
\begin{equation*}
\Ho^n(\Tot( E^{\cdot\cdot})) =  \Ho^n(\Hc, \Kc)
\oplus \Ho^n(\Vc, \Kc), \quad n> 0.
\end{equation*}
Thus, we get the result.
\end{proof}
\vskip .3cm
\vskip .3cm
\begin{subsection}{The total first cohomology group $\Ho^1_{\Tot} (\T,\Kc)$}
Here we show some properties of \emph{total $1$-cocycles} that are well known in the case of group cohomology. Through this subsection $p: \Kc \to \Pc$ denote an abelian group bundle and $\T = (\F; \Vc, \Hc;\Pc)$ is a double groupoid acting along $p$ on the left. We also denote by $\A^{\cdot \cdot}$ the double complex defined in \eqref{complexdoublecohomology}.

\begin{lemma}\label{1cociclos} Let $(\tau,\sigma) \in A^{0,1} \oplus A^{1,0}$ then $d^1(\tau,\sigma) = 0$ if and only if for all $F,G,H\in \B$,
\begin{center}
\begin{equation}\label{taucocyclo}
\tau(F\; G)+\tau(\{ F G \} \; H) =\tau(F\; \{ G H \}) +
\big( b(F) \cdot \tau(G\, H)\big),\;\text{for all}\; F,G,H\in \B\;
\text{such that}\; \quad F\vert G \vert H;
\end{equation}
\begin{equation}\label{sigmacocyclo}
\sigma \left( \begin{matrix} G \\ H \end{matrix} \right) +
\sigma\left(\begin{matrix} F \\ \left\{\begin{matrix} G \vspace{-4pt}\\
H \end{matrix} \right\} \end{matrix} \right)
= \big( l(H)^{-1} \cdot\sigma\left(\begin{matrix}F \\ G\end{matrix}\right) \big) +
\sigma \left( \begin{matrix} \left\{ \begin{matrix}
F \vspace{-4pt} \\ G \end{matrix} \right\} \\ H \end{matrix} \right),
\;\text{for all}\; F,G,H\in \B\;
\text{such that}\; \quad \begin{tabular}{p{0,4cm}}$F$ \\ \hline $G$\\
\hline $H$\end{tabular};
\end{equation}
\end{center}
and
\begin{center}
\begin{equation}\label{sigma-tau-compatibles}
\big(l(H)^{-1}\cdot \tau(F\,G)\big) +
\tau(H\,J) + \sigma\left(\begin{matrix}\left\{FG\right\} \\
\left\{HJ\right\}\end{matrix}\right) = \big(b(H)\cdot
\sigma\left(\begin{matrix}G\\J\end{matrix}\right)\big) +
\sigma\left(\begin{matrix}F\\H\end{matrix}\right) +
\tau\left(\left\{\begin{matrix} F \vspace{-4pt}\\ H
\end{matrix}\right\}\, \left\{\begin{matrix} G\vspace{-4pt} \\
J \end{matrix}\right\} \right),
\end{equation}
\end{center}
for all
\begin{tabular}
{p{0,4cm}|p{0,4cm}} $F$ & $G$ \\ \hline $H$ & $J$
\end{tabular}.
\end{lemma}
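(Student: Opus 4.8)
The plan is to unwind the total differential $d^{1}$ on $\Tot(\A^{\cdot\cdot})^{1}$ into its three homogeneous components and match each one against the three displayed equations. First I would record the two relevant pieces of the total complex,
\[
\Tot(\A^{\cdot\cdot})^{1} = A^{0,1}\oplus A^{1,0},
\qquad
\Tot(\A^{\cdot\cdot})^{2} = A^{0,2}\oplus A^{1,1}\oplus A^{2,0},
\]
so that, writing $d = d_{h}+d_{v}$ as in \eqref{complexdoublecohomology}, a pair $(\tau,\sigma)$ with $\tau\in A^{0,1}$ and $\sigma\in A^{1,0}$ is sent to
\[
d^{1}(\tau,\sigma) = \bigl(\, d_{h}^{0,1}\tau,\ \ d_{v}^{0,1}\tau + d_{h}^{1,0}\sigma,\ \ d_{v}^{1,0}\sigma \,\bigr)
\in A^{0,2}\oplus A^{1,1}\oplus A^{2,0}.
\]
Since these three entries sit in independent direct summands, $d^{1}(\tau,\sigma)=0$ if and only if each component vanishes separately, and the lemma reduces to computing the three components explicitly using $d_{h}^{r,s}=d_{H}^{r+1,s+1}$ and $d_{v}^{r,s}=(-1)^{s}d_{V}^{r+1,s+1}$.

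For the two \emph{pure} components the computation is a direct substitution into the coboundary formulas. The $A^{0,2}$--component is $d_{h}^{0,1}\tau = d_{H}^{1,2}\tau$; evaluating the formula for $d_{H}^{1,s}$ at $s=2$ on a horizontally composable triple $F\mid G\mid H$ and setting it to zero gives exactly \eqref{taucocyclo}. The $A^{2,0}$--component is $d_{v}^{1,0}\sigma = (-1)^{0}d_{V}^{2,1}\sigma = d_{V}^{2,1}\sigma$; evaluating the formula for $d_{V}^{r,1}$ at $r=2$ on a vertically composable column with entries $F,G,H$ and setting it to zero gives \eqref{sigmacocyclo}, the factor $l(H)^{-1}$ arising from the last term of $d_{V}^{r,1}$.

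Finally, for the mixed $A^{1,1}$--component I would compute $d_{v}^{0,1}\tau = (-1)^{1}d_{V}^{1,2}\tau = -\,d_{V}^{1,2}\tau$ together with $d_{h}^{1,0}\sigma = d_{H}^{2,1}\sigma$, evaluate both on the $2\times2$ array with rows $(F,G)$ and $(H,J)$, and impose $d_{H}^{2,1}\sigma = d_{V}^{1,2}\tau$. Here $d_{V}^{1,2}\tau$ produces the terms $l(H)^{-1}\!\cdot\tau(F\,G)$, $\tau(H\,J)$ and $\tau\!\bigl(\{\!\begin{smallmatrix}F\\ H\end{smallmatrix}\!\}\,\{\!\begin{smallmatrix}G\\ J\end{smallmatrix}\!\}\bigr)$, while $d_{H}^{2,1}\sigma$ produces $b(H)\!\cdot\sigma\!\bigl(\begin{smallmatrix}G\\ J\end{smallmatrix}\bigr)$, $\sigma\!\bigl(\begin{smallmatrix}F\\ H\end{smallmatrix}\bigr)$ and $\sigma\!\bigl(\begin{smallmatrix}\{FG\}\\ \{HJ\}\end{smallmatrix}\bigr)$; transposing terms across the equality rearranges these into \eqref{sigma-tau-compatibles}. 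The whole argument is routine substitution, so the only real point of care — and the one place an error could slip in — is the sign bookkeeping from the sign trick (in particular the factor $(-1)^{s}$ in $d_{v}^{r,s}$, which flips the sign of the $d_{V}^{1,2}$ term in the mixed component) together with the correct identification of which box occupies each slot after renaming $A_{11},A_{12},A_{21},A_{22}$ as $F,G,H,J$; no conceptual obstacle arises beyond this matching.
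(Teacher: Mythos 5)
Your proposal is correct and takes essentially the same approach as the paper: both decompose $d^{1}$ into its three homogeneous components, identify them via the sign trick as $d_{H}^{1,2}\tau$, $d_{H}^{2,1}\sigma - d_{V}^{1,2}\tau$ and $d_{V}^{2,1}\sigma$, and match the vanishing of each with \eqref{taucocyclo}, \eqref{sigma-tau-compatibles} and \eqref{sigmacocyclo} respectively. Your explicit evaluation of the coboundary formulas on the triples and on the $2\times 2$ array (which the paper compresses into ``these equations can be seen are equivalent to\ldots'') is accurate, including the sign $(-1)^{s}$ giving $d_{v}^{0,1}=-d_{V}^{1,2}$.
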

\begin{proof}
Let $(\sigma, \tau)$ be a total $1$-cocycle. Since
$d^1  : A^{1,0} \oplus A^{0,1} \to A^{2,0} \oplus A^{1,1} \oplus A^{0,2}$
is defined by  $d^1 = (d_v^{1,0}, d_h^{1,0} + d_v^{0,1}, d_h^{0,1})$,
and since
\begin{align*}
d_v^{1,0} := d_V^{2,1} &: D^{2,1} \to D^{3,1},\\
d_v^{0,1} := -d_V^{1,2} &: D^{1,2} \to D^{2,2},\\
d_h^{0,1} := d_H^{1,2} &: D^{1,2} \to D^{1,3},\\
d_h^{1,0} := d_H^{2,1} &: D^{2,1} \to D^{1,1}.
\end{align*}
Then $d^1(\tau,\sigma) = 0$ if and only if
$d_V^{2,1}(\sigma)  = 0$, $d_H^{2,1}(\sigma) - d_V^{1,2}(\tau) = 0$
and $d_H^{1,2}(\tau) =0$. These equations can be seen are equivalent
to \eqref{sigmacocyclo},\eqref{taucocyclo} and \eqref{sigma-tau-compatibles}.
\end{proof}
\begin{lemma}
Any total $1$-cocycle is cohomologous to a normalized total $1$-cocyle.
\end{lemma}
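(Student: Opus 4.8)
The plan is to exhibit the normalizing $0$-cochain $\beta\in A^{0,0}=D^{1,1}$ explicitly and to read off from the cocycle identities \eqref{taucocyclo}--\eqref{sigmacocyclo} exactly the values it must take on identity boxes. Recall that the total $1$-coboundary of $\beta$ is $d^0\beta=(d_v^{0,0}\beta,\,d_h^{0,0}\beta)$, where one computes from the defining formulas that $(d_h^{0,0}\beta)(F,G)=b(F)\cdot\beta(G)-\beta(\{FG\})+\beta(F)$ and $(d_v^{0,0}\beta)\binom{F}{G}=\beta(G)-\beta\left\{\begin{smallmatrix}F\\ G\end{smallmatrix}\right\}+l(G)^{-1}\cdot\beta(F)$. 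Since the coboundary maps land in the normalized spaces, $\tau':=\tau+d_h^{0,0}\beta$ lies in $D^{1,2}$ and $\sigma':=\sigma+d_v^{0,0}\beta$ lies in $D^{2,1}$ for \emph{any} $\beta$, so the modified cocycle automatically still vanishes whenever an entry of $\tau$ lies in $\Hc$ or an entry of $\sigma$ lies in $\Vc$. Thus it suffices to choose $\beta$ so that, in addition, $\tau'$ vanishes on arguments in $\Vc$ and $\sigma'$ vanishes on arguments in $\Hc$; then $(\tau',\sigma')$ vanishes on every identity box, i.e.\ it is normalized, and it is cohomologous to $(\tau,\sigma)$.

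The first step is to notice that only the values of $\beta$ on the identity boxes $\idd(\Vc)$ and $\idd(\Hc)$ enter these four conditions. Indeed, taking $G=\idd(g)$ with $g\in\Vc$ and using $\{F\,\idd(g)\}=F$ gives $(d_h^{0,0}\beta)(F,\idd(g))=b(F)\cdot\beta(\idd(g))$; taking $F=\idd(g)$ and using $\{\idd(g)\,G\}=G$ together with $b(\idd(g))\in\Pc$ gives $(d_h^{0,0}\beta)(\idd(g),G)=\beta(\idd(g))$; dually, for $x\in\Hc$ one gets $(d_v^{0,0}\beta)\binom{F}{\idd(x)}=\beta(\idd(x))$ and $(d_v^{0,0}\beta)\binom{\idd(x)}{G}=l(G)^{-1}\cdot\beta(\idd(x))$. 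In every case the value of $\beta$ on the non-identity box cancels. Hence the four desired vanishings are the single prescriptions $\beta(\idd(g))=-\tau(\idd(g),G)=-b(F)^{-1}\cdot\tau(F,\idd(g))$ and $\beta(\idd(x))=-\sigma\binom{F}{\idd(x)}=-l(G)\cdot\sigma\binom{\idd(x)}{G}$.

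The main point, and the step I expect to be the real work, is checking these prescriptions are consistent; this is where the cocycle identities are used. Substituting the triple $(F,\idd(g),G)$ into \eqref{taucocyclo} and simplifying $\{F\,\idd(g)\}=F$, $\{\idd(g)\,G\}=G$ collapses it to $\tau(F,\idd(g))=b(F)\cdot\tau(\idd(g),G)$; in particular $\tau(\idd(g),G)$ is independent of both $F$ and $G$, so $\beta(\idd(g))$ is well defined. Dually, substituting $(F,\idd(x),G)$ into \eqref{sigmacocyclo} and using that the vertical product of $F$ above $\idd(x)$ is $F$ and of $\idd(x)$ above $G$ is $G$ yields $\sigma\binom{F}{\idd(x)}=l(G)\cdot\sigma\binom{\idd(x)}{G}$, so $\beta(\idd(x))$ is well defined. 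I then set $\beta:=0$ on every box in neither $\idd(\Vc)$ nor $\idd(\Hc)$. The only boxes lying in both families are the full identities $\Theta_P=\idd\circ\id(P)$, and there the two formulas agree and return $0$: as $\Theta_P\in\Hc$ the built-in normalization of $\tau$ forces $\tau(\Theta_P,\Theta_P)=0$, and as $\Theta_P\in\Vc$ that of $\sigma$ forces $\sigma\binom{\Theta_P}{\Theta_P}=0$. The base-point condition $p\circ\beta=bl$ is inherited from $p\circ\tau$ and $p\circ\sigma$, so $\beta$ is a genuine element of $D^{1,1}$.

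With this $\beta$ the pair $(\tau',\sigma')=(\tau,\sigma)+d^0\beta$ is cohomologous to $(\tau,\sigma)$, is again a total $1$-cocycle (it differs from one by a coboundary, and $d^1 d^0=0$), and by construction vanishes on all identity arguments; hence it is normalized, which is what we wanted. It is worth noting that the compatibility relation \eqref{sigma-tau-compatibles} plays no role here: the normalizations in the horizontal and vertical directions decouple, each being governed by its own $2$-cocycle identity, and they interact only at the double identities $\Theta_P$, where both prescriptions independently give $0$.
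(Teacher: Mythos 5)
Your overall strategy (normalize by adding the total coboundary of a single $0$-cochain $\beta\in D^{1,1}=A^{0,0}$) is sound, and your prescriptions on identity boxes, $\beta(\idd g)=-\tau(\idd g,\idd g)$ and $\beta(\idd x)=-\sigma\left(\begin{smallmatrix}\idd x\\ \idd x\end{smallmatrix}\right)$, together with the consistency checks via \eqref{taucocyclo} and \eqref{sigmacocyclo}, agree (up to sign) with the values the paper's cochains $\lambda,\delta$ take on those boxes. The gap is the step where you extend $\beta$ by zero off the identity boxes, justified by the claim that ``the coboundary maps land in the normalized spaces for any $\beta$''. That claim is false, and it is exactly where your proof breaks. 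The normalization built into $D^{1,2}$ is vanishing when an entry lies in $\Hc$, i.e.\ is a \emph{vertical} identity box $\idd x$, $x\in\Hc$; such a box is not an identity for the \emph{horizontal} product, so $\{F\,\idd x\}\neq F$, and no cancellation occurs there. Concretely, for a pair $(F,\idd x)$ with $r(F)=l(\idd x)$ and $F$, $\{F\,\idd x\}$ not identity boxes, your $\beta$ gives
\begin{equation*}
\tau'(F,\idd x)\;=\;\tau(F,\idd x)+b(F)\cdot\beta(\idd x)-\beta(\{F\,\idd x\})+\beta(F)\;=\;-\,b(F)\cdot\sigma\left(\begin{matrix}\idd x\\ \idd x\end{matrix}\right),
\end{equation*}
and symmetrically $\sigma'\left(\begin{smallmatrix}\idd g\\ G\end{smallmatrix}\right)=-\,l(G)^{-1}\cdot\tau(\idd g,\idd g)$. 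These terms vanish identically only if $\sigma\left(\begin{smallmatrix}\idd x\\ \idd x\end{smallmatrix}\right)=0$ and $\tau(\idd g,\idd g)=0$ for all $x,g$ --- but by your own identities that happens precisely when $(\sigma,\tau)$ was already normalized. So in general your correction does not normalize: it merely swaps which family of identity boxes the cocycle vanishes on, and $(\sigma',\tau')$ even falls outside $D^{2,1}\oplus D^{1,2}$.

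The repair is exactly what the paper does: extend $\beta$ off the identities not by zero but as a function of the sides of the box, $\beta(B)=-\sigma\left(\begin{smallmatrix}\idd b(B)\\ \idd b(B)\end{smallmatrix}\right)-\tau\bigl(\idd l(B),\idd l(B)\bigr)$, i.e.\ the negative of the paper's $\lambda+\delta$. With this choice the three $\beta$-terms in the display above telescope to zero, because $x\mapsto\sigma\left(\begin{smallmatrix}\idd x\\ \idd x\end{smallmatrix}\right)$ and $g\mapsto\tau(\idd g,\idd g)$ are groupoid $1$-cocycles on $\Hc$ and $\Vc$ respectively; proving that fact requires the mixed relation \eqref{sigma-tau-compatibles} (applied to the $2\times 2$ matrix both of whose rows are $(\idd x,\idd y)$, resp.\ both of whose columns are $\left(\begin{smallmatrix}\idd g\\ \idd g'\end{smallmatrix}\right)$) together with the built-in vanishing $\tau(\idd x,\idd y)=0$ and $\sigma\left(\begin{smallmatrix}\idd g\\ \idd g'\end{smallmatrix}\right)=0$. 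The same fact is what shows $d_h\lambda=0$ and $d_v\delta=0$, hence that the paper's pair $(\sigma-d_v\lambda,\tau-d_h\delta)$ really is $(\sigma,\tau)-d^0(\lambda+\delta)$, a genuine total coboundary. So your closing remark is exactly backwards: the horizontal and vertical normalizations do \emph{not} decouple --- the single coboundary $d^0\beta$ feeds the values of $\beta$ on $\idd(\Hc)$ into the $\tau$-component and those on $\idd(\Vc)$ into the $\sigma$-component --- and \eqref{sigma-tau-compatibles} is precisely the ingredient that makes the normalization possible.
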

\begin{proof}
Let $(\sigma, \tau)$ be a total $1$-cocycle. We define
$\lambda,\;\delta:\B \to \Kc $ by
$\lambda(B) = \sigma \left( \begin{matrix} Id\; b(B)\\Id\; b(B)\end{matrix} \right)$
and
$\delta ( B ) = \tau ( Id\;l(B) \; Id\;l(B))$.
By mean of the equations \eqref{sigmacocyclo} and \eqref{taucocyclo},
it is easy to show that the pair $(\sigma', \tau')$, where
$\sigma'= \sigma - d_v^{1,0}\lambda$ and $\tau' = \tau - d_h^{0,1}\delta$,
is a normalized total $1$-cocycle.
\end{proof}
\begin{remark}
 In the proof of the above lemma we have really proved that any
 \emph{vertical and horizontal $2$-cocycles} are \emph{equivalent} to a
 normalized ones.
\end{remark}
\end{subsection}

\begin{subsection}{Extensions of double groupoids by abelian group bundles and the 1-cohomolgy group}

Recall that $\gamma: \F \to \Pc$ is the `left-bottom' vertex map, i.e. $\gamma(A) =
lb(A)$.

\begin{proposition}\cite[prop. 1.5]{AN3}\label{construct-double-groupoid}
Let $p: \Kc \to \Pc$ be an abelian group bundle, and let $\T = (\F; \Vc, \Hc;\Pc)$ be a double groupoid acting along $p$ on the left.
Let $(\sigma, \tau)$ be a normalized total $1$-cocycle of $\F$
with values in $\Kc$. Define on $\Kc \Times{p}{\gamma} \F$
the following maps
\begin{itemize}
\item Four maps $t,b,l,r$ on $\Kc\Times{p}{\gamma} \F$
      defined by those in $\F$: $t(K, F) = t(F)$ and so on.
\item Two composition laws, vertical and horizontal, in $\Kc\Times{p}{\gamma} \F$
    defined by
\begin{align}\label{prodhorreconstrbis}
\{(K,F)(L,G)\} &= \big(K +(b(F)\cdot L)+\tau(F\, G), \{FG\}\big),
\qquad \text{if }\; F\;\vert\; G, \\
\label{prodverreconstrbis}
\left\{\begin{matrix}(K,F) \\(L,G) \end{matrix}\right\} &=
\left( (l(G)^{-1}\cdot K)+ L+
\sigma \left(\begin{matrix}F \\ G \end{matrix}\right),
\left\{\begin{matrix}F \\ G\end{matrix}\right\}\right), \qquad \text{if }\; \dfrac FG\;.
\end{align}

\item Two identity maps $\id: \Vc\to\Kc\Times{p}{\gamma} \F$ and
$\id: \Hc\to\Kc\Times{p}{\gamma} \F$, given by
$\id g = (\Theta_{b (g)}, \id g)$ and $\id\; x = (\Theta_{l (x)}, \id\; x)$
if $g\in \Vc$ and $x\in \Hc$ respectively.

\item The inverse of $(K, F)$ with respect to the horizontal and
vertical products are given by
\begin{align}\label{inversa-hor}
(K,F)^h &= \left(b(F)^{-1}\cdot\left( -K - \tau(F\;, F^h)\right), F^h\right)
\quad \text{and}  \\
\label{inversa-ver} (K,F)^v &= \left(-\left(l(F)\cdot
K\right)-\sigma\left(\begin{matrix}F \\ F^v\end{matrix}\right), F^v\right),
\end{align}
respectively.
\end{itemize}
With these structural maps, the arrangement
$$\begin{matrix}
\Kc\Times{p}{\gamma} \F &\rightrightarrows &\Hc
\\\downdownarrows &&\downdownarrows \\ \Vc &\rightrightarrows &\Pc
\end{matrix}$$
is a double groupoid which will be denoted by $\Kc\; \sharp_{\sigma, \tau}\;\F$.
\end{proposition}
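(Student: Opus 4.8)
The plan is to verify directly that the arrangement $\Kc\Times{p}{\gamma}\F$ equipped with the stated structure maps satisfies the axioms of a double groupoid, exploiting the cocycle conditions \eqref{taucocyclo}, \eqref{sigmacocyclo} and \eqref{sigma-tau-compatibles} at precisely the points where associativity and the interchange law must be checked. First I would confirm that the two composition laws are well defined on the pullback $\Kc\Times{p}{\gamma}\F$: since the second coordinate is just the composition in $\F$, and the first coordinate lands in the appropriate fiber of $\Kc$ because $p\circ\alpha$ matches the relevant vertex map (this is precisely the defining fiber condition of $D^{r,s}$), the formulas make sense whenever the $\F$-products do. The source, target, left and right maps are inherited from $\F$, so the compatibility of these with the products reduces to the already-verified compatibility in $\F$ together with bookkeeping on the $\Kc$-coordinate.

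Next I would check that each of the two structures is a genuine groupoid. For the horizontal structure, associativity of \eqref{prodhorreconstrbis} on the $\Kc$-coordinate is exactly the content of the $\tau$-cocycle identity \eqref{taucocyclo}: expanding $\{(K,F)(L,G)(M,H)\}$ in the two possible orders produces the two sides of \eqref{taucocyclo} after using that the $\Hc$-action distributes over the group operation and respects composition. The identity laws follow from normalization of $(\sigma,\tau)$ (so $\tau$ and $\sigma$ vanish on identity boxes), and the inverse formulas \eqref{inversa-hor}, \eqref{inversa-ver} are forced by solving $\{(K,F)(K,F)^h\} = \id$; I would verify they indeed invert on both sides. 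Symmetrically, associativity of the vertical product \eqref{prodverreconstrbis} is the $\sigma$-cocycle identity \eqref{sigmacocyclo}, using that the $\Vc$-action by $l(-)^{-1}$ is by bundle automorphisms.

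The main obstacle, and the heart of the proof, is the \emph{interchange law} \eqref{interchange_law}. One must take four compatible boxes arranged as a $2\times 2$ matrix, compute both
\[
\begin{matrix}\{(K,F)(L,G)\}\\ \{(M,H)(N,J)\}\end{matrix}
\qquad\text{and}\qquad
\left\{\begin{matrix}(K,F)\\(M,H)\end{matrix}\right\}\left\{\begin{matrix}(L,G)\\(N,J)\end{matrix}\right\},
\]
and compare the resulting $\Kc$-coordinates. The second coordinates agree because $\F$ itself satisfies the interchange law. For the first coordinates, expanding both sides via \eqref{prodhorreconstrbis} and \eqref{prodverreconstrbis} yields two expressions in $K,L,M,N$ together with $\sigma$- and $\tau$-terms; after cancelling the module contributions using that the actions are by automorphisms and satisfy the double-action relation \eqref{acciondoble}, the equality of the remaining cocycle terms is exactly the mixed compatibility relation \eqref{sigma-tau-compatibles}. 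This is the computation that genuinely requires all three cocycle equations simultaneously, and the careful matching of action terms is where sign and ordering errors are most likely to hide.

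Finally, I would note that all differentiability or topological refinements are absent here (this is the discrete statement), so no continuity of the structure maps needs checking; the verification is purely algebraic, and assembling the checked axioms shows that $\Kc\Times{p}{\gamma}\F$ with base groupoids $\Vc$ and $\Hc$ over $\Pc$ is a double groupoid, which we denote $\Kc\,\sharp_{\sigma,\tau}\,\F$.
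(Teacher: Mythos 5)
Your proposal is correct and takes essentially the same route as the paper: the paper's proof merely observes that the hypotheses amount to an action of $\F$ along $p$ together with the total $1$-cocycle conditions of Lemma \ref{1cociclos}, and defers the axiom-by-axiom verification to \cite[Prop. 1.5]{AN3}. Your outline carries out exactly that verification, correctly matching horizontal and vertical associativity to \eqref{taucocyclo} and \eqref{sigmacocyclo}, the identity and inverse laws to normalization, and the interchange law---after cancelling the module terms via \eqref{acciondoble}---to the mixed relation \eqref{sigma-tau-compatibles}.
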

\begin{proof} It is easy to see that the  explicit hypothesis of the original formulation of this proposition are equivalent to ask for an action of $\F$ along $p$ and that $(\sigma, \tau)$ is a total $1$-cocycle (see Lemma \ref{1cociclos}).
  
The proof is a long but straightforward by checking each axiom
of the definition of double groupoid, see \cite[prop. 1.5]{AN3}
for further details.
\end{proof}
\begin{remark}
The proposition \ref{construct-double-groupoid} is, in fact, an if a only if result as one can easily check. That is, $\Kc\Times{p}{\gamma} \F$ is a double groupoid if and only if $d^1(\sigma, \tau) = 0$
\end{remark}

\begin{definition} Let $p: \Kc \to \Pc$ be an abelian group bundle, and let $\T = (\F; \Vc, \Hc;\Pc)$ be a double groupoid acting along $p$ on the left.
Let $(\sigma, \tau)$ be a normalized total $1$-cocycle of $\F$ with values in $\Kc$. The double groupoid structure defined on $\Kc\Times{p}{\gamma} \F$ as in proposition \ref{construct-double-groupoid} is called the  {\em double groupoid extensions of $\F$ by $(\sigma, \tau)$} and is denotated $\Kc\; \sharp_{\sigma, \tau}\;\F$ .
\end{definition}

Now, we are going to state the main result of
this section.

\begin{theorem}\label{corresp-cocycles-opext}
Let $(\F; \Vc, \Hc; \Pc)$ be a double groupoid and
let $p: \Kc \to \Pc$ be any abelian group bundle.
There is a bijection between $\mathcal{O}pext(\F,\Kc)$
and the cohomology group $\Ho^1_{\Tot}(\F, \Kc)$.
\end{theorem}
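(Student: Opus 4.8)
The plan is to build a bijection $\Psi\colon \Ho^1_{\Tot}(\F,\Kc)\to\mathcal{O}pext(\F,\Kc)$ by sending a total $1$-cocycle to the extension it produces through Proposition~\ref{construct-double-groupoid}, and to invert it by choosing a set-theoretic section of the projection and reading off the induced cocycle. Since every cohomology class has a normalized representative (by the normalization lemma proved above), I may evaluate $\Psi$ on a normalized total $1$-cocycle $(\sigma,\tau)$, setting $\Psi[(\sigma,\tau)]:=[\Kc\,\sharp_{\sigma,\tau}\,\F]$, where $\Kc\,\sharp_{\sigma,\tau}\,\F$ is viewed as an extension via $\iota(K)=(K,\Theta_{p(K)})$ and $\Pi(K,F)=F$. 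It then remains to check that $\Psi$ is well defined on classes, surjective, and injective.

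For well-definedness, suppose $(\sigma',\tau')=(\sigma,\tau)+d^{0}\mu$ with $\mu\in\A^{0,0}=D^{1,1}$ normalized, so that, componentwise, $\sigma'-\sigma=d_{v}^{0,0}\mu=d_{V}^{1,1}\mu$ and $\tau'-\tau=d_{h}^{0,0}\mu=d_{H}^{1,1}\mu$. I would define $\Phi\colon\Kc\,\sharp_{\sigma,\tau}\,\F\to\Kc\,\sharp_{\sigma',\tau'}\,\F$ by $\Phi(K,F)=(K+\mu(F),F)$ and check, against the explicit products \eqref{prodhorreconstrbis} and \eqref{prodverreconstrbis}, that the extra $\mu$-terms created in the first coordinate are precisely $d_{H}^{1,1}\mu$ in the horizontal product and $d_{V}^{1,1}\mu$ in the vertical product; these absorb the differences $\tau'-\tau$ and $\sigma'-\sigma$, so $\Phi$ is a morphism of double groupoids. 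As $\Phi$ fixes $\Kc$ and covers $\id_{\F}$, it is an equivalence of extensions, and $\Psi$ descends to cohomology classes.

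Surjectivity is the heart of the argument. Given an extension $1\to\Kc\hookrightarrow\B\xrightarrow{\ \Pi\ }\F\to1$, choose a set-theoretic section $\theta\colon\F\to\B$ of $\Pi$ carrying identities to identities (available in the discrete setting). Because $\ker\Pi=\Kc$ is an abelian group bundle and each fibre $\Pi^{-1}(F)$ is a torsor under $\Kc_{\gamma(F)}$ for the core action, every box of $\B$ over $F$ is uniquely $K\cdot\theta(F)$ with $K\in\Kc_{\gamma(F)}$, so $(K,F)\mapsto K\cdot\theta(F)$ is a bijection $\Kc\Times{p}{\gamma}\F\to\B$. Measuring the failure of $\theta$ to be multiplicative defines $\tau$ and $\sigma$ on horizontally, respectively vertically, composable pairs via $\theta(F)\,\theta(G)=\tau(F,G)\cdot\theta(\{FG\})$ and the analogous vertical identity. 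The horizontal associativity in $\B$ then gives \eqref{taucocyclo}, the vertical associativity gives \eqref{sigmacocyclo}, and---this is the essential point---the interchange law \eqref{interchange_law} of $\B$ gives the mixed identity \eqref{sigma-tau-compatibles}; by Lemma~\ref{1cociclos} these three equations say exactly that $(\sigma,\tau)$ is a total $1$-cocycle. Transporting the two products of $\B$ along the bijection reproduces \eqref{prodhorreconstrbis}--\eqref{prodverreconstrbis} verbatim, whence $\B\cong\Kc\,\sharp_{\sigma,\tau}\,\F$ as extensions and $[\B]\in\operatorname{Im}\Psi$.

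For injectivity, let $\Phi$ be an equivalence of extensions $\Kc\,\sharp_{\sigma,\tau}\,\F\xrightarrow{\ \sim\ }\Kc\,\sharp_{\sigma',\tau'}\,\F$. Since $\Phi$ restricts to the identity on $\Kc$ and satisfies $\Pi'\circ\Phi=\Pi$, it is forced to have the form $\Phi(K,F)=(K+\mu(F),F)$ for a unique normalized $\mu\colon\F\to\Kc$; requiring $\Phi$ to preserve both products and comparing with \eqref{prodhorreconstrbis}--\eqref{prodverreconstrbis} yields $\tau'-\tau=d_{H}^{1,1}\mu$ and $\sigma'-\sigma=d_{V}^{1,1}\mu$, i.e. $(\sigma',\tau')-(\sigma,\tau)=d^{0}\mu$, so the cocycles are cohomologous. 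Together with the previous steps this makes $\Psi$ a bijection. I expect the only genuinely delicate point to be the surjectivity paragraph: extracting \eqref{sigma-tau-compatibles} from the interchange law and verifying that the transported horizontal and vertical products, together with the induced identities and inverses, match \eqref{prodhorreconstrbis}--\eqref{prodverreconstrbis} on the nose; the well-definedness and injectivity computations are then routine consequences of Lemma~\ref{1cociclos} and Proposition~\ref{construct-double-groupoid}.
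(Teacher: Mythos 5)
Your proof is correct and takes essentially the same route as the paper: the same correspondence $[\sigma,\tau]\mapsto \Kc\,\sharp_{\sigma,\tau}\,\F$, the same isomorphism $\Phi(K,F)=(K+\mu(F),F)$ for well-definedness (up to an immaterial sign/direction convention on $\mu$), and the same injectivity argument as Lemma \ref{eq.ext.cohom.cocycles}. The only divergence is that you sketch the surjectivity step directly---fibres of $\Pi$ are $\Kc$-torsors under the core action, a normalized set-theoretic section produces $(\sigma,\tau)$, and associativity plus the interchange law yield the three cocycle identities of Lemma \ref{1cociclos}---whereas the paper delegates exactly this step to \cite[Prop.\ 1.9]{AN3} via Proposition \ref{any.ext.is.smash}; your sketch is the standard argument behind that citation.
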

We divide the proof of this theorem in three steps. The first two ones are the contents of  lemma \ref{eq.ext.cohom.cocycles} and proposition \ref{any.ext.is.smash}.

\begin{lemma}\label{eq.ext.cohom.cocycles}
Let $(\F; \Vc, \Hc; \Pc)$ be a double groupoid,
let $p: \Kc \to \Pc$ be any abelian group bundle and
let $(\sigma, \tau)$ and $(\sigma', \tau')$ be normalized total $1$-cocycles of $\F$
with values in $\Kc$. If the double groupoid extensions
$\Kc\; \sharp_{\sigma, \tau}\;\F$ and $\Kc\; \sharp_{\sigma', \tau'}\;\F$
are equivalent, then $(\sigma, \tau)$ and $(\sigma', \tau')$
are in the same cohomology class.
\end{lemma}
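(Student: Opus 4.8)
The plan is to convert the equivalence $\Phi\colon \Kc\; \sharp_{\sigma, \tau}\;\F \to \Kc\; \sharp_{\sigma', \tau'}\;\F$ into an explicit $0$-cochain $\eta\in A^{0,0}=D^{1,1}$ whose total coboundary is the difference of the two cocycles. Because the equivalence diagram \eqref{isomorphic extensions} forces $\Pi_2\Phi=\Pi_1$ and $\Phi\iota=\iota$, and because $\Phi$ is taken over the fixed lateral groupoids $\Vc,\Hc$, the map $\Phi$ must have the shape $\Phi(K,F)=(\psi_F(K),F)$ for a family of maps $\psi_F\colon\Kc_{\gamma(F)}\to\Kc_{\gamma(F)}$, with $\psi_{\Theta_P}=\id$. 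I would then set $\eta(F):=\psi_F(\Theta_{\gamma(F)})$; since $p(\eta(F))=\gamma(F)=bl(F)$ this lies in $D^{1,1}=A^{0,0}$ (and there is no normalization to verify in bidegree $(1,1)$), and the target of the whole argument is the single identity $(\sigma-\sigma',\tau-\tau')=d^0\eta=(d^{1,1}_V\eta,\,d^{1,1}_H\eta)$.

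The technical heart, and the step I expect to be the main obstacle, is to prove that each $\psi_F$ is the \emph{translation} $\psi_F(K)=K+\eta(F)$; the rest is a substitution. I would do this in two steps. First, since a morphism preserves horizontal products and horizontal inverses, I compute the fibrewise ``difference'': from \eqref{prodhorreconstrbis} and \eqref{inversa-hor}, for $K_1,K_2\in\Kc_{\gamma(F)}$,
\[
\{(K_1,F)\,(K_2,F)^h\}=(K_1-K_2,\ \idd\, l(F)),
\]
the $\tau$-terms cancelling identically; running the same computation in $\Kc\; \sharp_{\sigma', \tau'}\;\F$ and applying $\Phi$ yields
\[
\psi_{\idd l(F)}(K_1-K_2)=\psi_F(K_1)-\psi_F(K_2),
\]
so each $\psi_F$ is affine with linear part the homomorphism $\psi_{\idd l(F)}$. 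Second, I show this linear part is the identity. For $g\in\Vc$ the horizontal-identity box $\idd g$ has trivial top and bottom, so \eqref{prodverreconstrbis} and the normalization of $\sigma$ give the decomposition
\[
(M,\idd g)=\left\{\begin{matrix}\iota(g\cdot M)\\ \id g\end{matrix}\right\},
\]
with $\iota(g\cdot M)=(g\cdot M,\Theta_{t(g)})$ a kernel element and $\id g=(\Theta_{b(g)},\idd g)$ the horizontal unit. Applying $\Phi$, which fixes $\iota(g\cdot M)$ (as $\Phi\iota=\iota$) and fixes the unit $\id g$, and using the normalization of $\sigma'$, the right-hand side recomputes in $\Kc\; \sharp_{\sigma', \tau'}\;\F$ to $(M,\idd g)$ again; hence $\psi_{\idd g}=\id$. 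Combining the two steps gives $\psi_F(K_1)-\psi_F(K_2)=K_1-K_2$, i.e. $\psi_F(K)=K+\eta(F)$.

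With the translation form established the conclusion is routine. Imposing that $\Phi$ respects the horizontal product \eqref{prodhorreconstrbis} on the pair $(\Theta,F),(\Theta,G)$ and cancelling the common $K$-term gives
\[
\tau(F,G)-\tau'(F,G)=\eta(F)+b(F)\cdot\eta(G)-\eta(\{FG\})=d_H^{1,1}\eta(F,G),
\]
while respecting the vertical product \eqref{prodverreconstrbis} gives
\[
\sigma\left(\begin{matrix}F\\G\end{matrix}\right)-\sigma'\left(\begin{matrix}F\\G\end{matrix}\right)=l(G)^{-1}\cdot\eta(F)+\eta(G)-\eta\left(\begin{matrix}F\\G\end{matrix}\right)=d_V^{1,1}\eta\left(\begin{matrix}F\\G\end{matrix}\right).
\]
Since $d^0=(d_v^{0,0},d_h^{0,0})=(d_V^{1,1},d_H^{1,1})$ on $A^{0,0}=D^{1,1}$, these two equalities say exactly that $(\sigma-\sigma',\tau-\tau')=d^0\eta$, so $(\sigma,\tau)$ and $(\sigma',\tau')$ lie in the same class. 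The only delicate bookkeeping is keeping every sum in the correct fibre $\Kc_P$ and tracking the action \eqref{accionver}--\eqref{accionhor}, together with the sign conventions identifying $d_v^{0,0},d_h^{0,0}$ with $d_V^{1,1},d_H^{1,1}$; the normalization hypotheses on $\sigma,\sigma',\tau,\tau'$ are precisely what make the degenerate terms above vanish.
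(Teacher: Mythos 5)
Your proposal is correct and takes essentially the same route as the paper's proof: your cochain $\eta(F)=\psi_F(\Theta_{\gamma(F)})$ is precisely the paper's $\lambda(F)=(\rho_1\circ\Phi)(\Theta,F)$ (with $\rho_1$ the projection onto $\Kc$), and the coboundary identities $\sigma-\sigma'=d_V^{1,1}\eta$, $\tau-\tau'=d_H^{1,1}\eta$ are obtained, exactly as there, by applying $\Phi$ to the vertical and horizontal products of the zero-lifted boxes $(\Theta,F)$, $(\Theta,G)$. The only difference is internal bookkeeping: your two-step derivation of the translation form $\Phi(K,F)=(K+\eta(F),F)$ can be replaced by the paper's one-line argument, which applies $\Phi$ to the decomposition $(K,F)=\left\{\begin{smallmatrix}(\Theta,F)\\(K,\Theta)\end{smallmatrix}\right\}$ and uses $\Phi\iota=\iota$ together with the normalization of $\sigma'$.
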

\begin{proof}
Let $\Phi: \Kc\; \sharp_{\sigma, \tau}\;\F \to \Kc\; \sharp_{\sigma', \tau'}\;\F $
be an equivalence of extensions of double groupoids
\begin{equation}\label{eq.ext.cocycles}
\xymatrix{
& & \Kc\; \sharp_{\sigma, \tau}\;\F \ar@{}[ddr] |{\circlearrowleft} \ar@{}[ddl] |{\circlearrowright} \ar[dd]_\Phi
\ar@{>>}[dr]^{\Pi_1}&&\\
1 \ar[r] & \Kc \ar@{^{(}->}[dr]_\iota \ar@{^{(}->}[ur]^\iota & &
\F \ar[r] & 1\\
& & \Kc\; \sharp_{\sigma', \tau'}\;\F \ar@{>>}[ur]_{\Pi_2}&&
}
\end{equation}
and let $\lambda: \F \to \Kc$ be defined by $\lambda(F) = (\rho_1 \circ \Phi)(\theta, F)$,
where $\rho_1: \Kc\; \sharp_{\sigma', \tau'}\;\F \to \Kc$ denote the projection onto $\Kc$.

The following identities are easily obtained
from the operations on $\Kc\;\sharp_{\sigma, \tau}\;\F$
 and the commutativity of diagram \eqref{eq.ext.cocycles}
\begin{equation}\label{eq.prod.ext}
(K, F) = \left\{\begin{matrix} (\Theta, F) \\ (K, \Theta) \end{matrix}\right\},
\quad
\left\{ \begin{matrix} (\Theta, F) \\ (\Theta, G) \end{matrix} \right\}
=\left( \sigma\left(\begin{matrix} F \\ G \end{matrix}\right),
\left\{ \begin{matrix} F \\ G \end{matrix} \right\} \right)
\quad \text{and} \quad \Phi(K , F) = ((\rho_1 \circ \Phi)(K, F), F).
\end{equation}
From the above equations we obtain that
$$\sigma\left( \begin{matrix} F \\ G \end{matrix} \right)
= (\rho_1 \circ \Phi)(\Theta, G) - (\rho_1 \circ \Phi)\left(\Theta,
\left\{\begin{matrix} F \\ G \end{matrix}\right\}
\right) + l(G)^{-1} \cdot (\rho \circ \Phi)(\Theta, F)
+ \sigma'\left( \begin{matrix} F \\ G \end{matrix} \right)
\quad\text{for any} \quad F, G \in \F;
$$
that is $\sigma = \sigma' + d_V^{1,1}\lambda$. In the same way we
show that $\tau = \tau' + d_H^{1,1}\lambda$.
\end{proof}
\begin{proposition}\cite[Prop. 1.9]{AN3}\label{any.ext.is.smash}
Any double gropoid extension of $\F$ by $\Kc$ is 
equivalent to $\Kc \; \sharp_{\sigma, \tau} \F$ for
some total $1$-cocycle $(\sigma, \tau)$.
\end{proposition}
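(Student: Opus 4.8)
The plan is to follow the classical Schreier strategy for group extensions, adapted to the two compatible groupoid structures. Start from an arbitrary extension $1 \to \Kc \hookrightarrow \B \twoheadrightarrow \F \to 1$ with projection $\Pi$ and, using that $\Pi$ is surjective in the discrete setting, choose a \emph{normalized} set-theoretic section $s \colon \F \to \B$ of $\Pi$ (so $\Pi\circ s = \id_\F$ and $s$ carries identity boxes to identity boxes). Since $\Pi$ fixes the lateral groupoids $\Vc,\Hc,\Pc$, the box $s(F)$ has the same four sides as $F$; hence for $B\in\B$ with $\Pi(B)=F$ one checks that $B$ is determined by $s(F)$ together with a unique kernel element $\kappa(B)\in\Kc$, the general-extension analogue of \eqref{eq.prod.ext}. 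This produces a bijection $\Phi \colon \Kc\Times{p}{\gamma}\F \to \B$, inverse to $B \mapsto (\kappa(B),\Pi(B))$, which is compatible with $\iota$ and $\Pi$ by construction. The existence of the section is the one genuinely discrete ingredient, and it is precisely what the topological part of the paper must later replace by local sections.

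Next I would define the prospective cocycle $(\sigma,\tau)$ by measuring the failure of $s$ to be multiplicative. Writing $\kappa(B)\in\Kc$ for the kernel-component determined above, set $\tau(F,G):=\kappa(\{s(F)\,s(G)\})$ and $\sigma\left(\begin{smallmatrix}F\\G\end{smallmatrix}\right):=\kappa\left(\left\{\begin{smallmatrix}s(F)\\s(G)\end{smallmatrix}\right\}\right)$ whenever the horizontal (resp. vertical) product is defined. Transporting the two products of $\B$ across the bijection $\Phi$ and expanding via these definitions together with the conjugation actions \eqref{accionver}--\eqref{accionhor} must reproduce precisely formulas \eqref{prodhorreconstrbis} and \eqref{prodverreconstrbis}; normalization of $(\sigma,\tau)$ is inherited from that of $s$.

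The heart of the argument is checking that $(\sigma,\tau)$ is a normalized total $1$-cocycle, i.e. that it satisfies the three identities \eqref{taucocyclo}, \eqref{sigmacocyclo} and \eqref{sigma-tau-compatibles} of Lemma \ref{1cociclos}. Each arises from an axiom of $\B$ evaluated on section values: associativity of the horizontal product on $s(F),s(G),s(H)$ yields the $\tau$-identity \eqref{taucocyclo}; associativity of the vertical product on the analogous column yields \eqref{sigmacocyclo}; and the interchange law \eqref{interchange_law} applied to the $2\times 2$ array $s(F),s(G),s(H),s(J)$ yields the mixed compatibility \eqref{sigma-tau-compatibles}. The interchange identity is the delicate step, since it couples both products and both actions simultaneously, and it is where the bookkeeping of the kernel components and of the conjugation actions must be tracked carefully; this is the main obstacle. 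Once $(\sigma,\tau)$ is known to be a normalized total $1$-cocycle, Proposition \ref{construct-double-groupoid} guarantees that $\Kc\sharp_{\sigma,\tau}\F$ is a genuine double groupoid, and the map $\Phi$ built above is then an isomorphism of double groupoids that is the identity on $\Kc$ and induces the identity on $\F$, hence an equivalence of extensions. This establishes the claim.
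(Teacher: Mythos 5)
Your proposal is correct and takes essentially the same route as the paper: the paper gives no independent argument here but defers entirely to \cite[Prop.~1.9]{AN3}, whose proof is exactly your Schreier-style scheme --- choose a normalized set-theoretic section of $\Pi$, decompose each box as a unique kernel element acting (via the core-type action) on a section value, define $(\sigma,\tau)$ as the failure of the section to be multiplicative, and derive the three identities of Lemma \ref{1cociclos} from horizontal associativity, vertical associativity, and the interchange law. The paper's own later discussion (beginning of the topological section) confirms that this proof hinges precisely on the global section you use, the ingredient that must be replaced by local sections in the topological setting.
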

\begin{proof}
See \textit{loc. cit} for further details.
\end{proof}
Now we are going to provide a proof of Theorem \ref{corresp-cocycles-opext}.
\begin{proof}[Proof of theorem \ref{corresp-cocycles-opext}]
Let $(\sigma, \tau )$ and $(\sigma', \tau')$ be cohomologous
total $1$-cocycles. Then there is a map $\lambda: \F \to \Kc$
such that $(\sigma', \tau') = (\sigma, \tau) + d^0 \lambda$;
that is, $\sigma' = \sigma + d^{1,1}_V$ and $\tau'= \tau + d^{1,1}_H$.
Let $\B= \Kc\; \sharp_{\sigma, \tau}\;\F$ and by
$\B' = \Kc\; \sharp_{\sigma', \tau'}\;\F$ the extensions of $\F$ by
$\Kc$ determined in  \ref{construct-double-groupoid} by
$(\sigma, \tau)$ and $(\sigma', \tau')$, respectively.
Define the map
$$ \Phi: \B' \to \B \quad \text{such that} \quad (K, F) \mapsto (K + \lambda(F), F).$$
We state that $\Phi$ is an isomorphism of double groupoids.
In fact, it is clear that it is one to one and onto, then
we need to show that it preserve both structure of groupoids.

For all $(K, F), (L, G) \in \B'$ we have
\begin{align}
\Phi \left(
\begin{matrix}
(K, F) \\
(L, G)
\end{matrix}
\right)
&=\Phi \left((l(G)^{-1} \cdot K)+ L + \sigma'\left(\begin{matrix}F \\ G \end{matrix}\right),
\left\{ \begin{matrix}F \\ G \end{matrix}\right\}\right)\nonumber\\
&=\left( (l(G)^{-1} \cdot K)+ L +  \sigma'\left(\begin{matrix}F \\ G \end{matrix}\right) +
\lambda\left( \left\{ \begin{matrix}F \\ G \end{matrix}\right\}\right),
\left\{\begin{matrix}F \\ G \end{matrix}\right\}\right)\nonumber\\
&= \left( (l(G)^{-1} \cdot K) + L + (\sigma + d_V^{1,1})
\left( \begin{matrix} F \\ G \end{matrix}\right),
\left\{ \begin{matrix}F \\ G \end{matrix}\right\}  \right)\nonumber\\
&= \left( (l(G)^{-1} \cdot K) + L +
\sigma\left( \begin{matrix} F \\ G \end{matrix}\right)
+ \lambda(G) - \lambda \left( \left\{ \begin{matrix} F \\ G \end{matrix}\right\} \right)
+l(G)^{-1}\cdot \lambda(F) + \lambda \left(\left\{ \begin{matrix}
F \\G \end{matrix}\right\}\right),
\left\{ \begin{matrix}F \\ G \end{matrix}\right\}  \right)\nonumber\\
\label{expr.1}
&= \left( (l(G)^{-1} \cdot K) + L +
\sigma\left( \begin{matrix} F \\ G \end{matrix}\right)
+ \lambda(G) +l(G)^{-1}\cdot \lambda(F),
\left\{ \begin{matrix}F \\ G \end{matrix}\right\}  \right).
\end{align}
On the other side
\begin{align}\label{expr.2}
\left\{\begin{matrix}
\Phi(K, F) \\ \Phi(L, G)
\end{matrix}
\right\}
&= \left\{\begin{matrix}
(K + \lambda(F), F) \\ (L + \lambda(G), G)
\end{matrix}
\right\} = \left( l(G)^{-1} \cdot (K + \lambda(F)) + L + \lambda(G)
 + \sigma\left( \begin{matrix} F \\ G \end{matrix} \right),
 \left\{ \begin{matrix} F \\ G \end{matrix} \right\} \right);
\end{align}
and since the action of $\Vc$ distributes then the
expressions \eqref{expr.1} and \eqref{expr.2} coincides and
the map $\Phi$ preserves vertical composition. In the same
way we can show that $\Phi$ preserves horizontal composition,
and thus we have an equivalence of the extensions
$\B$ and $\B'$.
The above reasoning permit us to introduce a well defined
map
$$\Psi: H^1_{Tot}(\F, \Kc) \to \mathcal{O}pext(\F, \Kc),
\quad \text{such that}\quad [\sigma, \tau] \mapsto \Kc\; \sharp_{\sigma, \tau}\;\F,$$
where $[\sigma, \tau]$ denote the cohomology class
of a total $1$-cocycle $(\sigma, \tau)$; we will to show that
$\Psi$ is a bijection.

Clearly $\Psi$ is one to one by lemma \ref{eq.ext.cohom.cocycles}
and it is onto because of proposition \ref{any.ext.is.smash}. This
finish the proof.	
\end{proof}
\end{subsection}

%
%
%
%
%
%
%
%

\end{section}

\begin{section}{Bisimplicial spaces and double groupoids}

We are going to introduce some notation to be used
in the rest of the work. Let $\Delta$ be the simplicial category, that is,
the category whose objects are $[n] = \{ 0, 1, 2 ,\cdots, n \}$,
for  every $n \in \mathbb{N}$, and whose morphisms
are the order preserving maps.
The following proposition give us a combinatorial description of the simplicial category.

\begin{proposition}[Prop. VII.5.2 \cite{ml}]
The category $\Delta$, with objects all finite ordinals,
is generated by the arrows $\epsilon_i^n: [n-1] \to [n]$ and
$\eta_i^n: [n+1] \to [n]$, where $\epsilon_i^n$ and
$\eta_i^n$ are the unique increasing map that avoids $i$,
and the unique non-decreasing surjective map
such that $i$ is reached twice ($0 \leq i \leq n$), respectively.
These maps are subject to the following relations
\begin{itemize}
\item $\epsilon_i^{n-1}  \epsilon_j^n = \epsilon_{j-1}^{n-1} \epsilon_i^n$
if $i < j$,
\item $\eta_i^{n+1}\eta_j^n = \eta_{j+1}^{n+1} = \eta_{j+1}^{n+1} \eta_i^n$
if $i \leq j$,
\item $\epsilon_j^{n+1}\eta_j^n = \eta_{j-1}^{n-1}\epsilon_i^n$ if $i < j$,
\item $\epsilon_i^{n+1} \eta_j^n = \eta_j^{n-1} \epsilon_{i-1}^n$ if
$i > j + 1$,
\item $\epsilon_j^{n+1} \eta_j^n = \epsilon_{j+1}^{n+1} \eta_j^n = Id$.
\end{itemize}

\end{proposition}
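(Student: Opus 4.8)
The plan is to establish both claims of the proposition—that these arrows generate $\Delta$ and that the displayed relations present it—by the classical route through the unique epi--mono factorization available in $\Delta$. First I would recall that every morphism $\phi\colon[m]\to[n]$ factors as $\phi=\mu\circ\nu$, where $\nu\colon[m]\to[k]$ is surjective and $\mu\colon[k]\to[n]$ is injective; here $[k]$ is the image of $\phi$, with $\nu$ its corestriction and $\mu$ the inclusion, and this factorization is visibly unique. I would then observe that an injective order-preserving map $[k]\to[n]$ omits exactly $n-k$ values, and inserting these one at a time realizes it as a composite of coface maps $\epsilon_i$; dually, a surjective order-preserving map identifies exactly $m-k$ adjacent pairs, and collapsing them one at a time realizes it as a composite of codegeneracy maps $\eta_j$. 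Together with the factorization this shows the $\epsilon_i^n$ and $\eta_i^n$ generate every arrow of $\Delta$.

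Next I would verify the five simplicial identities by a direct computation, evaluating each side on an arbitrary element of the relevant ordinal. The heart of the argument is then to show these relations \emph{suffice}. Using the mixed identities, which interchange an adjacent coface and codegeneracy (or cancel the two to the identity), one collects all cofaces to the left of all codegeneracies; the relation among the cofaces and the relation among the codegeneracies then sort the respective indices, reducing any word in the generators to the canonical form
\begin{equation*}
\phi \;=\; \epsilon_{i_1}\cdots\epsilon_{i_p}\,\eta_{j_1}\cdots\eta_{j_q},
\qquad i_1>\cdots>i_p,\quad j_1<\cdots<j_q.
\end{equation*}
By the generation step the block of cofaces is precisely a composite expressing the monic factor $\mu$ of $\phi$ and the block of codegeneracies one expressing the epic factor $\nu$; since the epi--mono factorization is unique and the ordered composites realizing $\mu$ and $\nu$ are themselves unique, this canonical form is unique. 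Hence two words in the generators are equal in $\Delta$ precisely when they reduce to the same canonical form, which is exactly what the relations let one decide.

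I expect the main obstacle to lie in this last, completeness, step. One must check both that the rewriting terminates at the form above—for instance by arguing that each application of a mixed identity strictly decreases the number of codegeneracies standing to the left of a coface—and, more delicately, that the combinatorial normal form so produced coincides with the unique epi--mono factorization, so that no further relations can be required. Once this bijection between canonical forms and morphisms of $\Delta$ is in hand, the presentation follows.
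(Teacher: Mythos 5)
Your proposal is correct and takes essentially the same approach as the paper: the paper's entire proof is a citation to \cite[Prop.\ VII.5.2]{ml}, and Mac Lane's argument there is exactly your route --- unique epi--mono factorization in $\Delta$, decomposition of monos into cofaces $\epsilon_i$ and epis into codegeneracies $\eta_j$, and completeness of the relations via reduction of any word to the canonical form $\epsilon_{i_1}\cdots\epsilon_{i_p}\,\eta_{j_1}\cdots\eta_{j_q}$ with decreasing $i$'s and increasing $j$'s, whose uniqueness follows from the uniqueness of the factorization.
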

\begin{proof}
See \cite[Prop. VII.5.2]{ml}.
\end{proof}
For a complete treatment
of the simplicial category and its properties see \cite{ml}
or \cite{gj} for a deepest one.

In the following diagram we show from left to right all the $\epsilon$  maps,
and from right to left all the $\eta$ maps of the above proposition,

\begin{equation}\label{simplicial-cat-diagram}
\xymatrix
{
[0] \ar@<4pt>[r] \ar@<-4pt>[r] & [1] \ar@<7pt>[r]\ar[r]\ar@<-7pt>[r] \ar[l] & 
[2] \ar@<10pt>[r]\ar@<4pt>[r]\ar@<-4pt>[r]\ar@<-10pt>[r] \ar@<4pt>[l] \ar@<-4pt>[l] & [3] \ar[l] \ar@<7pt>[l] \ar@<-7pt>[l]
\cdots.
}
\end{equation}

\subsection{Simplicial sets}
\begin{definition}
A simplicial set is a contravariant functor  $X: \Delta^{op} \to \mathcal{S}ets$,
where $\mathcal{S}ets$. In the same way we define \emph{simplicial topological spaces} (or just \emph{simplicial spaces})
and \emph{simplicial manifolds}, just changing $\mathcal{S}ets$ by $\mathcal{T}op$ or $\mathcal{M}an$,
the categories of \emph{topological spaces} or \emph{smooth manifolds}, respectively.
\end{definition}

\begin{remark}
Given a simplicial set $X$ we will to denote it by $X_\bullet = \{ X_n \}_{n \in \mathbb{N}}$.
We also write $\epsilon_i^n$ and $\eta_i^n$ for $\Delta(\epsilon_i^n)$ and $\Delta(\eta_i^n)$,
respectively, if no confusion arise.
\end{remark}

Any topological (Lie) groupoid $\xymatrix{ \mathcal{G} \ar@<-2pt>[r] \ar@<2pt>[r] & \Pc}$
canonically gives rise to a simplicial space (manifold)
as follows  \cite{tu}: Let
$$G_n = \{ (g_1, \ldots, g_n) \in \G^n\; | \; s(g_i) = t(g_{i+1}) \; \forall i \}$$
be the set of all composable $n$-tuples of arrows in $\G$
and define the \emph{face} and \emph{degeneracy maps} as follows
\begin{itemize}
\item $\tilde{\ep}_0^1(g) = r(g)$ and $\bar{\ep}_1^1(g) = s(g)$ for $n >1$;
\item $\tilde{\ep}_0^n(g_1, \ldots, g_n) = (g_2, \ldots, g_n)$ for $n >1$;
\item $\tilde{\ep}_n^n(g_1, \ldots, g_n) = (g_1, \ldots, g_{n-1})$ for $n >1$;
\item $\tilde{\ep}_i^n(g_1, \ldots, g_n) = (g_1, \ldots, g_ig_{i+1}, \ldots, g_n)$ for $1 \leq i \leq n-1$;
\item $\tilde{\eta}_0^0: \G_0 \to \G_1$ the unit map of the groupoid;
\item $\tilde{\eta}_0^n(g_1,\ldots, g_n) = (s(g_1), g_1, \ldots, g_n)$;
\item $\tilde{\eta}_i^n(g_1,\ldots, g_n) = (g_1, \ldots, g_i, t(g_i), g_{i+1}, \ldots, g_n )$
for $1 \leq i \leq n$.
\end{itemize}
We refer to loc. cit. to another way to see the simplicial structure of $\G_\bullet$.

\begin{remark}
We would like to note here that we are reading the arrows of 
a groupoid from left to right.
\end{remark}
\begin{definition}
Let $X$ and $Y$ be simplicial sets. A map of simplicial sets $f : X \to Y$ is
a natural transformation of contravariant set valued functors.
\end{definition}
We will to denote by $\Sc$ the resulting category of
simplicial sets, that is, $\Sc$ is the functor category $\mathcal{S}ets^{\Delta^{op}}$.

\subsection{Bisimplicial sets}

\begin{definition}
A bisimplicial set is a simplicial object in $\Sc$. That is,
a bisimplicial set $X$ is a functor
$$X : \Delta^{op} \times \Delta^{op} \to \mathcal{S}ets,$$
or equivalently, is a functor $X: \Delta^{op} \to 	\mathbb{S}$.
We will to denote a bisimplicial set $X$ by $X_{\bullet \bullet}$.
\end{definition}

In the following diagram we show, for the category $\Delta^2$, 
a bidimensional analogue to diagram \ref{simplicial-cat-diagram},
\begin{equation}
\xymatrix
{
 \vdots & \vdots  &\vdots  & \\
([2], [0]) \ar@<4pt>[r] \ar@<-4pt>[r]  \ar@<10pt>[u]\ar@<4pt>[u]\ar@<-4pt>[u]\ar@<-10pt>[u] \ar@<4pt>[d] \ar@<-4pt>[d] & ([2], [1]) \ar@<7pt>[r]\ar[r]\ar@<-7pt>[r] \ar[l] \ar@<10pt>[u]\ar@<4pt>[u]\ar@<-4pt>[u]\ar@<-10pt>[u] \ar@<4pt>[d] \ar@<-4pt>[d] & ([2], [2]) \ar@<10pt>[u]\ar@<4pt>[u]\ar@<-4pt>[u]\ar@<-10pt>[u] \ar@<4pt>[d] \ar@<-4pt>[d] \ar@<10pt>[r]\ar@<4pt>[r]\ar@<-4pt>[r]\ar@<-10pt>[r] \ar@<4pt>[l] \ar@<-4pt>[l] & \cdots \\
([1], [0]) \ar@<4pt>[r] \ar@<-4pt>[r] \ar@<7pt>[u]\ar[u]\ar@<-7pt>[u] \ar[d] & ([1], [1]) \ar@<7pt>[r]\ar[r]\ar@<-7pt>[r] \ar[l] \ar@<7pt>[u]\ar[u]\ar@<-7pt>[u] \ar[d]& ([1], [2]) \ar@<7pt>[u]\ar[u]\ar@<-7pt>[u] \ar[d] \ar@<10pt>[r]\ar@<4pt>[r]\ar@<-4pt>[r]\ar@<-10pt>[r] \ar@<4pt>[l] \ar@<-4pt>[l] & \cdots \\
([0], [0]) \ar@<4pt>[r] \ar@<-4pt>[r]  \ar@<4pt>[u] \ar@<-4pt>[u]  & ([0], [1]) \ar@<4pt>[u] \ar@<-4pt>[u] \ar@<7pt>[r]\ar[r]\ar@<-7pt>[r] \ar[l]  & ([0], [2]) \ar@<4pt>[u] \ar@<-4pt>[u]  \ar@<10pt>[r]\ar@<4pt>[r]\ar@<-4pt>[r]\ar@<-10pt>[r] \ar@<4pt>[l] \ar@<-4pt>[l] & \cdots.
}
\end{equation}

Where the depicted maps are as follows:
\begin{itemize}
\item the \emph{vertical face maps} $([m], [n]) \to ([m+1], [n])$ are $\epsilon^{m+1,n}_{i,v} = (\epsilon_i^{m+1}, Id)$,
\item the \emph{vertical degeneracy maps} $([m+1], [n]) \to ([m], [n])$ are $\eta^{m,n}_{i ,v}= (\eta_i^m, Id)$,
\item the \emph{horizontal face maps} $([m], [n]) \to ([m], [n+1])$ are $\epsilon_{i,h}^{m, n+1} = (Id, \epsilon^{n+1}_i)$, and
\item the \emph{horizontal degeneracy maps} $([m], [n+1]) \to ([m], [n])$ are $\eta^{m, n}_{i,h} = (Id, \eta^{n}_i)$.
\end{itemize}

Thus, a bisimplicial set $\dbs{X}$ can be depict as an array
\begin{equation}
\xymatrix
{
 \vdots \ar@<10pt>[d]\ar@<4pt>[d]\ar@<-4pt>[d]\ar@<-10pt>[d]  & \vdots \ar@<10pt>[d]\ar@<4pt>[d]\ar@<-4pt>[d]\ar@<-10pt>[d] &\vdots  \ar@<10pt>[d]\ar@<4pt>[d]\ar@<-4pt>[d]\ar@<-10pt>[d] & \\
X_{2, 0} \ar[r] \ar@<7pt>[d]\ar[d]\ar@<-7pt>[d] & X_{2, 1} \ar@<4pt>[r] \ar@<-4pt>[r] \ar@<4pt>[l] \ar@<-4pt>[l] \ar@<7pt>[d]\ar[d]\ar@<-7pt>[d] & X_{2, 2}  \ar@<7pt>[l]\ar[l]\ar@<-7pt>[l] \ar@<7pt>[d]\ar[d]\ar@<-7pt>[d] & \cdots \ar@<10pt>[l]\ar@<4pt>[l]\ar@<-4pt>[l]\ar@<-10pt>[l] \\
X_{1, 0} \ar[r] \ar@<-4pt>[d] \ar@<4pt>[d] \ar@<4pt>[u] \ar@<-4pt>[u] & X_{1, 1} \ar@<4pt>[r] \ar@<-4pt>[r] \ar@<4pt>[l] \ar@<-4pt>[l] \ar@<-4pt>[d] \ar@<4pt>[d] \ar@<4pt>[u] \ar@<-4pt>[u] & X_{1, 2} \ar@<7pt>[l]\ar[l]\ar@<-7pt>[l] \ar@<-4pt>[d] \ar@<4pt>[d] \ar@<4pt>[u] \ar@<-4pt>[u] & \cdots \ar@<10pt>[l]\ar@<4pt>[l]\ar@<-4pt>[l]\ar@<-10pt>[l] \\
X_{0, 0} \ar[r] \ar[u] & X_{0, 1}  \ar@<4pt>[r] \ar@<-4pt>[r] \ar@<4pt>[l] \ar@<-4pt>[l] \ar[u] & X_{0, 2}  \ar@<7pt>[l]\ar[l]\ar@<-7pt>[l] \ar[u]  & \cdots \ar@<10pt>[l]\ar@<4pt>[l]\ar@<-4pt>[l]\ar@<-10pt>[l] .
}
\end{equation}

With \emph{face and degeneracy maps} denoted by $\tilde{\epsilon}^{m+1,n}_{i,v}, \tilde{\eta}^{m,n}_{i ,v},
\tilde{\epsilon}_{i,h}^{m, n+1}$ and $\tilde{\eta}^{m, n}_{i,h}$.

\begin{definition}
Let $X$ be a bisimplicial set. The \emph{diagonal simplicial set $d(X)$ associated
to $X$} is the simplicial set defined as $d(X)_n = X_{n,n}$. It
also can be viewed as the composition functor
$$ \Delta^{op} \overset{\Delta}\to \Delta^{op} \times \Delta^{op}
 \overset{X} \to \mathcal{S}ets,$$
 where $\Delta$ is the diagonal functor.
\end{definition}

\subsection{A geometric construction of the bisimplicial set 
associated to a double groupoid}
\label{bisimplicial set of a double groupoid}
It is well known that to any category (in particular to any groupoid)
we can associate a simplicial set, called its categorical nerve.
In the same way, with any double category or double groupoid, 
we can associate a \emph{bisimplicial set} \cite[Sect. 3.5]{AN2}. In the case
of double groupoids, the existence of inverses for both operations permit us 
to construct the \text{nerve} of the double groupoid in 
a \emph{geometrical} way just from the core action on itself. 

We introduce some notation to construct the nerve of double groupoid. Let $(\F; \Vc, \Hc; \Pc)$ be a double groupoid, then if $x,x'\in \Hc$, $x\vert x'$ denotes that $r(x) =l(x')$. If $g,g' \in \Vc$, $g\vert g'$ denotes that $b(g) = t(g')$.  In analogous way, if $A,A' \in \F$, $A\vert A'$ denotes that $r(A) = l(A')$ and \,
$\begin{matrix}
A \\ \hline
A'
\end{matrix}$
\, denotes that $b(A) = t(A')$.

In section \ref{discrete_cohomology_section} we build a bigraded family of sets 
$\{ \mathcal{F}^{(m,n)} \}_{(m,n) \in \mathbb{N} \times \mathbb{N}}$ that allowed us to define the cohomology of discrete double groupoids. This family of sets can be endowed with a structure of bisimplicial set by defining the face and degeneracy maps as follows:
\begin{itemize}
\item For any $n \in \mathbb{N}$, let $\widetilde{\epsilon^{1,n}_{0,v}}[F_{1j}] = [b(F_{1j})]$;
\item For any $m >1$ and $n \in \mathbb{N}$, let $\widetilde{\epsilon^{m+1,n}_{0,v}}[F]$ be the array obtained from $F$ deleting the first row;
\item For any $m >1$ and $n \in \mathbb{N}$, let $\widetilde{\epsilon^{m+1,n}_{m+1,v}}[F]$ be the array obtained from $F$ deleting the last row;
\item For any $m >1$, $n \in \mathbb{N}$ and $1 \leq i \leq m$, let $\widetilde{\epsilon^{m+1,n}_{i,v}}[F]$ be the array obtained from $F$ composing the elements of rows $i$ and $i + 1$;
\item the \emph{vertical degeneracy maps} $([m+1], [n]) \to ([m], [n])$ are $\eta^{m,n}_{i ,v}= (\eta_i^m, Id)$,
\item the \emph{horizontal degeneracy maps} $([m], [n+1]) \to ([m], [n])$ are $\eta^{m, n}_{i,h} = (Id, \eta^{n}_i)$.
\end{itemize}

This bisimplicial set can be constructed in a more geometric way as a \emph{homogeneous space} of the core groupoid, in fact, 
%
%
%
%
%
%
%
let $\E(\F)$ be the 
core groupoid of $\F$. Let us denote by $\F^{m \times n}$ the set of $m \times n$ matrices 
with entries in $\F$. Let
\begin{equation}\label{pre-double-complex}
\F_{(m,n)} = \{ F = [F_{i,j}]_{0 \leq i \leq m , \; 0 \leq j \leq n} \in \F^{m \times n}\;|\; 
\l(F_{i.j}) = \l(F_{i, j+1}) \;\text{and }\; b(F_{i, j}) = b(F_{i+1, j})\},
\end{equation}
be the set of matrices of size $m \times n$ of boxes in $\F$ such that 
the boxes in a fixed row has the same left side and the boxes in 
a fixed column has de same bottom side.
We can note that the entries of a matrix in this set 
has the same left-bottom corner.

In \cite[Prop. 1.1]{AN3} the authors define the map 
$\gamma: \F \to \Pc$ as the ``left-bottom'' vertex 
$\gamma(B) = lb(B)$, and an action of the core groupoid $\E(\F)$ 
over the set of boxes $\F$ given be 
\begin{equation}\label{actioncore} E \rightharpoondown A : =
\left\{\begin{matrix}\iddv l(A)& A \vspace{-4pt}\\ E &\iddv
b(A)\end{matrix} \right\}, \quad A\in \F, E \in \E.
\end{equation}
We can extend this definitions to the sets of matrices of boxes $\F_{(m,n)}$, 
let us to define, for every $m,n \in \mathbb{N}$, the maps 
$\gamma_{(m,n)}: \F_{(m,n)} \to \Pc$ by 
$\gamma_{(m,n)}([F_{i.j}]) = bl(F_{m,1})$ and
let $\rightharpoondown : \E(\F) \pfib{e_{\E}}{\gamma_{(m,n)}}\F_{(m,n)}$ be
the action given by
\begin{center} 
$E \rightharpoondown [F_{i,j}] = [E \rightharpoondown F_{i,j}]$ for any $E \in \E(\F)$
and $[F_{i,j}] \in \F_{(m,n)}$ such that $\gamma([F_{i,j}]) = e_{\E(\F)}(E)$.
\end{center}

Denote by $\widetilde{\F}^{m,n}$ the set of orbits $\F_{m,n} / \E(\F)$ and 
by $\langle F_{i,j} \rangle$ the equivalence class  
in $\widetilde{\F}^{m,n}$ of a matrix of boxes $[F_{i,j}]$.

From now on we are going to denote the maps $\gamma_{(m,n)}$
bye the same letter $\gamma$.

\begin{proposition}\label{double_skeleton_core_groupoid}
Let $(\F; \Vc, \Hc; \Pc)$ be a double groupoid, let $\E(\F)$ be the 
core groupoid of $\F$ and let $\widetilde{\F}^{m,n}$ be as defined above. Then 
the map $\Phi: \F^{(m,n)} \to \widetilde{\F}^{m,n}$ given by $[F_{i,j}] \mapsto \langle \overline{F}_{k,l} \rangle$ ,
where 
$$ \overline{F}_{k,l} =
\begin{cases}
\left\{
\begin{matrix}
F_{k, 1} & F_{k,2}& \cdots & F_{k,l}\\
\vdots & \vdots & & \vdots \\
F_{m,1} & F_{m,2} & \cdots & F_{m,l}\\
\end{matrix}
\right\} & \text{if} \;\; l\neq 0 \;\text{and}\; k \neq m,\\
\quad\quad\quad \iddv & \text{if}\;\; k = m \;\text{and}\; l \neq 0,\\
\quad\quad\quad \iddv & \text{if}\;\; l = 0 \;\text{and}\; k \neq m,\\
\quad\quad\quad \Theta & \text{if}\;\; k = m\;\text{and} \;l =  0,
\end{cases}
$$
is bijective, with inverse $\Psi : \widetilde{\F}^{m,n} \to \F^{(m,n)}$, given by
$$
\Psi(\langle F_{i,j} \rangle) = 
\left[
\begin{matrix}
\left\{
\begin{matrix}
F_{0,0}^h & F_{0,1}\\
F_{1,0}^{hv} & F_{1,1}^v 
\end{matrix}
\right\}
& \cdots &
\left\{
\begin{matrix}
F_{0,n-1}^h & F_{0,n}\\
F_{1,n-1}^{hv} & F_{1,n}^v 
\end{matrix}
\right\} \\
\vdots & & \vdots \\
\left\{
\begin{matrix}
F_{m-1,0}^h & F_{m-1,1}\\
F_{m,0}^{hv} & F_{m,1}^v 
\end{matrix}
\right\}
& \cdots &
\left\{
\begin{matrix}
F_{m-1,n-1}^h & F_{m-1,n}\\
F_{m,n-1}^{hv} & F_{m,n}^v 
\end{matrix}
\right\}
\end{matrix}
\right]
$$
\end{proposition}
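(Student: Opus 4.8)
The plan is to verify directly that $\Phi$ and $\Psi$ are well defined and mutually inverse, with the interchange law \eqref{interchange_law} serving as the main computational engine and the groupoid cancellation laws doing the clean-up.

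First I would check that $\Phi$ is well defined, i.e.\ that $[\overline{F}_{k,l}]$ is genuinely a matrix in $\F_{(m,n)}$ as defined in \eqref{pre-double-complex}. The key observation is that $\overline{F}_{k,l}$ is the single box obtained by composing the whole lower-left rectangular block (rows $k$ through $m$, columns $1$ through $l$) of the composable matrix $[F_{i,j}] \in \F^{(m,n)}$; since the matrix is composable, all these horizontal and vertical products are defined and, by \eqref{interchange_law}, unambiguous. Its left side is the vertical product of $l(F_{k,1}), \dots, l(F_{m,1})$, which depends only on $k$, while its bottom side is the horizontal product of $b(F_{m,1}), \dots, b(F_{m,l})$, which depends only on $l$; moreover every $\overline{F}_{k,l}$ has the same bottom-left corner $bl(F_{m,1})$. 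This is exactly the defining constraint of $\F_{(m,n)}$ and shows that $\gamma$ is constant on the block, so the orbit $\langle \overline{F}_{k,l}\rangle$ makes sense. The degenerate boundary values ($\iddv$ and $\Theta$) are the natural specializations of the same formula when $k=m$ or $l=0$.

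Next I would show that $\Psi$ is well defined on orbits. Each entry of $\Psi$ is a $2\times 2$ alternating product around the corner $(k,l)$ built from $F_{k-1,l-1}^h$, $F_{k-1,l}$, $F_{k,l-1}^{hv}$ and $F_{k,l}^v$; this is a discrete ``plaquette holonomy''. I would first check that the source and target edges of consecutive entries match, so that the output really is a composable matrix in $\F^{(m,n)}$: this is forced by the two constant-edge conditions defining $\F_{(m,n)}$ (constant left side along a row and constant bottom side along a column make adjacent plaquettes share an edge). Then I would verify invariance under the core action \eqref{actioncore}: replacing every $F_{i,j}$ by $E \rightharpoondown F_{i,j}$ glues the \emph{same} box $E$ at the bottom-left of each entry, and in the alternating product the two appearances of $E$ with opposite orientation cancel by \eqref{interchange_law} together with the horizontal/vertical inverse laws. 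Hence $\Psi$ descends to $\widetilde{\F}^{m,n}$.

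Finally I would treat the two composites. For $\Psi\circ\Phi=\id$ I would substitute the cumulative products $\overline{F}_{k,l}$ into the plaquette formula; since $\overline{F}_{k-1,l}$ (resp. $\overline{F}_{k,l-1}$) differs from $\overline{F}_{k,l}$ by exactly one extra row (resp. column) of the original data, the interchange law makes the alternating product telescope, and everything cancels except the single box $F_{k,l}$ — the discrete analogue of recovering a function from its cumulative products. For $\Phi\circ\Psi=\id$ I would run the telescoping in reverse: the cumulative product of the plaquettes $\Psi(\langle F\rangle)$ reassembles $F_{k,l}$ up to a single box glued at the common bottom-left corner, that is, up to the core action, so the two matrices represent the same class. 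I expect the main obstacle to be precisely this last bookkeeping: tracking the horizontal, vertical and mixed inverses $(\cdot)^h$, $(\cdot)^v$, $(\cdot)^{hv}$ and the exact edges through repeated applications of \eqref{interchange_law}, together with the core-invariance of $\Psi$, is where all the genuine work sits, even though each individual cancellation is elementary.
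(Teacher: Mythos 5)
Your proposal follows essentially the same route as the paper's proof: the central computation --- invariance of the plaquette map $\Psi$ under the core action, where the translated copies $E^h$, $E$, $E^{hv}$, $E^v$ assemble, via the interchange law, into a block that collapses to an identity --- is exactly the computation the paper carries out, after which the paper merely asserts that $\Phi$ and $\Psi$ are mutually inverse ``by a direct computation.'' Your outline in fact covers more ground than the printed proof (well-definedness of $\Phi$, composability of the plaquette matrix, and the telescoping arguments for both composites), so it is, if anything, more complete than what the paper records.
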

\begin{proof}
Fist we need to check that the map $\Psi$ is well defined.  In fact, 
if $\langle F_{i,j} \rangle = \langle L_{i,j} \rangle $ in $\widetilde{\F}^{m,n}$,
then there is $E \in \E(\F)$ such that 
$L_{i,j} =  E \rightharpoondown F_{i,j}$, for every $1\leq i\leq m$ and $1 \leq j \leq n$.
Then for $i=0, \ldots, m-1$ and $j= 0, \ldots, n-1$
we have
\begin{align*}
\left\{
\begin{matrix}
L_{i,j}^h & L_{i,j+1} \\
L_{i+1,j}^{hv} & L_{i+1,j+1}^v
\end{matrix}
\right\}
& = \left\{
\begin{matrix}
(E \rightharpoondown F_{i,j} )^{h} & (E \rightharpoondown F_{i,j+1}) \\
(E \rightharpoondown F_{i+1,j})^{hv} & (E \rightharpoondown F_{i+1,j+1})^{v}
\end{matrix}
\right\} \\
&= \left\{
\begin{matrix}
\left\{
\begin{matrix}
\iddv & F_{ij} \\
E & \iddv
\end{matrix}
\right\}^h
&
\left\{
\begin{matrix}
\iddv & F_{i,j+1}\\
E & \iddv
\end{matrix}
\right\}
\\
\left\{
\begin{matrix}
\iddv & F_{i+1,j}\\
E & \iddv
\end{matrix}
\right\}^{hv} &
\left\{
\begin{matrix}
\iddv & F_{i+1,j+1}\\
E & \iddv
\end{matrix}
\right\}^{v}
\end{matrix}
\right\} \\
& =
\left\{
\begin{matrix}
\left\{
\begin{matrix}
F_{ij}^h & \iddv^h \\
\iddv^h & E^h
\end{matrix}
\right\}
&
\left\{
\begin{matrix}
\iddv & F_{i,j+1} \\
E & \iddv
\end{matrix}
\right\} \\
\left\{
\begin{matrix}
\iddv^{hv} & E^{hv} \\
F_{i+1,j}^{hv} & \iddv^{hv}
\end{matrix}
\right\}
&
\left\{
\begin{matrix}
E^v & \iddv^v \\
\iddv^v & F_{i+1,j+1}^v
\end{matrix}
\right\}
\end{matrix}
\right\}\\
&=
\left\{
\begin{matrix}
F_{ij}^h &
\left\{
\begin{matrix}
\iddv^h & \iddv
\end{matrix}
\right\} &
F_{i,j+1} \\
\left\{
\begin{matrix}
 \iddv^h \\
\iddv^{hv}
\end{matrix}
\right\} &
\left\{
\begin{matrix}
E^h & E \\
E^{hv} & E^v
\end{matrix}
\right\}
& 
\left\{
\begin{matrix}
\iddv \\
\iddv^v
\end{matrix}
\right\} \\
F_{i+1,j}^{hv} & 
\left\{
\begin{matrix}
\iddv^{hv} & \iddv^v
\end{matrix}
\right\}
&
F_{i+1,j+1}^v
\end{matrix}
\right\}\\
&=
\left\{
\begin{matrix}
F_{ij}^h & F_{i,j+1} \\
F_{i+1,j}^{hv} & F_{i+1,j+1}^v
\end{matrix}
\right\}.
\end{align*}
The above shows that $\Psi$ es well defined.

By a direct computation we can show that $\Phi$ and $\Psi$ are inverse each other.
\end{proof}
\begin{remark}
With the identification introduced in the above result,
it is clear that the construction carried out in \ref{bsp-set-db-gpd}
associates a bisimplicial set to every double groupoid. In fact, if we have a morphism $f: ([m], [n]) \to ([k], [l])$ in $\Delta^2$ we can associate to it the map 
$$\tilde{f}: \F^{(k,l)} \to \F^{(m,n)} \quad \text{given by} \quad
\tilde{f}(\langle A_{i,j}\rangle_{0 \leq i \leq k , \; 0 \leq j \leq l} ) = \langle A_{f(i,j)} \rangle_{1 \leq i \leq m , \; 1 \leq j \leq n}.$$
It is no difficult to show that this maps define a contravariant functor 
from $\Delta^2$ to $\mathcal{S}ets$.
\end{remark}
%
%
%
%
%
%

\end{section}

\section{Bisimplicial cohomology of topological double groupoids}

The classification results obtained here (theorem \ref{corresp-cocycles-opext} and proposition \ref{any.ext.is.smash}) where proved for discrete double groupids. The next most natural step is to study to what extend we can carry out such decomposition of double groupoids in the topological and/or differentiable setting.

The first one attempt is to study continuous (differentiable) cohomology requiring that all maps that define the double groupoid cohomology be continuous (smooth) but, for the discrete case, the proof of proposition \ref{any.ext.is.smash} \cite[Prop. 1.9]{AN3}, that indicates that any double groupoid extension can be obtained as the \textit{smash product} of a slim double groupoid by an abelian group bundle, depends strongly in the existence of a section of the function that \textit{maps} a double groupoid onto its frame. In the continuous or  differentiable setting we cannot guarantee any more the existence of a global section of such map.

If the \textit{frame} of a topological (Lie) double groupoid is a quotient space (or smooth manifold) and the \textit{frame map} is an open map (surjective submersion, respectively) then we can assure the existence of local sections and then, we can try to localize the decomposition process of the double groupoid to the open sets where the local sections exists.
In this section we develop a \v{C}ech double groupoid cohomology that will permit us to classify the extensions of topological double groupoids by topological abelian group bundles as in the discrete case.

\subsection{\v{C}ech double groupoid}

\begin{definition}\label{cech_groupoid}
Let $\xymatrix{ \mathcal{G} \ar@<-2pt>[r] \ar@<2pt>[r] & \Pc}$
be a topological groupoid. Let $\{ U_i \}_{i \in I}$
be an open cover of $\Pc$, define the \textit{cover groupoid},
\textit{\v{C}ech groupoid} or the \textit{localization groupoid}
\begin{equation}
\mathcal{G}[U] = \{ (i, g, j) \in I \times \mathcal{G} \times I\;:\;
g \in r^{-1}(U_i) \cap s^{-1}(U_j)) \},
\end{equation}
with unity spaces $\mathcal{P}[U]=\{ (i, x) \in I \times \mathcal{P}\;:\; x \in U_i \}$,
source and target maps $s(i, g, j) = (i, s(g))$ and $r(i, g, j) = (j, r(g))$
and product $(i, g, j)(j, h, k)=(i, gh, k)$, when $r(g) = s(h)$.
\end{definition}
A relevant fact of the \textit{\v{C}ech groupoid}
is that the canonical map $\mathcal{G}[U] \to \mathcal{G}$
is a \emph{Morita equivalence} of groupoids.
In the case of double groupoids, given an open covering of 
the total base, we can construct a \emph{localization}
double groupoid in a similar fashion, although the notion of \emph{Morita equivalence} is more subtle and we don't study it here.
\begin{definition}
Let $(\B; \Vc, \Hc; \Pc)$ be a topological double groupoid and
let us consider $\mathcal{U} = \{ \mathcal{U}_i \}_{i \in I}$ be an open
cover of $\Pc$, we define de \textit{double cover groupoid of $(\B, \mathcal{U})$},
\textit{\v{C}ech double groupoid associated to $(\B, \mathcal{U})$}  or the
\textit{localization double groupoid associated to $(\B, \mathcal{U})$} as follows
\begin{itemize}
\item Let $\B[\mathcal{U}]$ be the set
\begin{equation}
\left\{ \left( \begin{matrix}
i & & j\\
& B & \\
l & & k
\end{matrix} \right) \;:\; B \in \B, \;
t(B) \in l^{-1}(U_i) \cap r^{-1}(U_j)\;
\text{and}\;
b(B) \in l^{-1}(U_l) \cap r^{-1}(U_k)
\right\}
\end{equation}
\item Let $\tau,\;\beta,\;\lambda$ and $\rho$
be the maps defined by
\begin{align}
&\tau,\beta: \B[\mathcal{U}] \to \Hc[U]\quad \text{such that}\quad
\tau\left( \begin{matrix}
i & & j\\
& B &\\
l & & k
\end{matrix} \right) = (i,t(B),j)
\quad \text{and}\quad
\beta\left( \begin{matrix}
i & & j\\
& B &\\
l & & k
\end{matrix} \right)
= (l, b(B), k)
\\
&\lambda,\rho: \B[\mathcal{U}] \to \Vc[U]\quad \text{such that}\quad
\lambda\left( \begin{matrix}
i & & j\\
& B &\\
l & & k
\end{matrix} \right) = (i,l(B),l)
\quad \text{and}\quad
\rho \left( \begin{matrix}
i & & j\\
& B &\\
l & & k
\end{matrix} \right)
= (j, r(B), k)
\end{align}
\item We define a horizontal and vertical composition laws
by the following rules
$$
\begin{matrix}
\left( \begin{matrix}
i & & j\\
& A &\\
l & & k
\end{matrix} \right)
\\
\left( \begin{matrix}
i' & & j'\\
& B &\\
l' & & k'
\end{matrix} \right)
\end{matrix}
=
\left( \begin{matrix}
i & & j\\
&
\left\{\begin{matrix}
A\\
B
\end{matrix}\right\}
&\\
l' & & k'
\end{matrix} \right)
\quad
\text{if}
\quad
(l, b(A), k) = (i', t(B), j')
$$
and
$$
\left( \begin{matrix}
i & & j\\
& A &\\
l & & k
\end{matrix} \right)
\left( \begin{matrix}
i' & & j'\\
& B &\\
l' & & k'
\end{matrix} \right)
=
\left( \begin{matrix}
i & & j\\
& \{ A \; B \} &\\
l & & k
\end{matrix} \right)
\quad
\text{if}
\quad
(j, r(A), k) = (i', l(B), l')
$$
\item The identity maps of the horizontal and vertical
groupoid structure of $\B[U]$ are defined as follows
\begin{align}
&Id_h: \Vc[U] \to \B[U] \quad \text{such that} \quad Id_h(j, f, k)
=
\left( \begin{matrix}
j & & j\\
& Id_h f &\\
k & & k
\end{matrix} \right)
\\
&Id_v:\Hc[U] \to \B[U] \quad \text{such that} \quad
Id_v (i, x, j)
=
\left( \begin{matrix}
i & & j\\
& Id_v x &\\
i & & j
\end{matrix} \right)
\end{align}
\item The horizontal and vertical inverse of a box in $\B[U]$
are defined respectively by
\begin{align}
&(\;-\;)^h:\B[U] \to \B[U] \quad
\text{such that} \quad
\left( \begin{matrix}
i & & j\\
& B & \\
l & & k
\end{matrix} \right)^h
=
\left( \begin{matrix}
j & & i\\
& B^h & \\
k & & l
\end{matrix} \right)\\
&(\;-\;)^v:\B[U] \to \B[U]
\quad
\text{such that}
\quad
\left( \begin{matrix}
i & & j\\
& B & \\
l & & k
\end{matrix} \right)^v
=
\left( \begin{matrix}
l & & k\\
& B^v & \\
i & & j
\end{matrix} \right)
\end{align}
\end{itemize}
With all the above maps and operations it is easily to check
that
$$
\xymatrix{
\B[U] \ar@<-2pt>[r]_{\lambda} \ar@<2pt>[r]^{\rho}
\ar@<-2pt>[d]_{\tau} \ar@<2pt>[d]^{\beta}
&
\Vc[U] \ar@<-2pt>[d]  \ar@<2pt>[d]
\\
\Hc[U] \ar@<-2pt>[r] \ar@<2pt>[r]&
\Pc[U]
}
$$
is a double groupoid.
\end{definition}
\begin{remark}
Roughly speaking, if we denote by
$$\B_{(i, j, k, l)} = t^{-1}(l^{-1}(U_i) \cap r^{-1}(U_j))\;
\bigcap \; b^{-1}(r^{-1}(U_k) \cap l^{-1}(U_l)),$$
then the \v{C}ech groupoid associated with $(\B, \mathcal{U})$
is the \textit{disjoint union double groupoid}
\begin{equation}
\xymatrix{
\bigsqcup_{(i, j, k, l) \in I^4}\B_{(i, j, k, l)}
\ar@<-2pt>[r] \ar@<2pt>[r] \ar@<-2pt>[d] \ar@<2pt>[d] &
\bigsqcup_{(i, j) \in I^2} \Vc_{(i, j)} \ar@<-2pt>[d] \ar@<2pt>[d]\\
\bigsqcup_{(l,k) \in I^2} \Hc_{(l,k)} \ar@<-2pt>[r]\ar@<2pt>[r] & \bigsqcup_{i \in I} \Pc_{i}.
}
\end{equation}
\end{remark}

\subsection{Sheaves and coverings on bisimplicial spaces}

\begin{definition}\cite[6.4.2]{D}\label{bisim-sheaf}
A sheaf $\ubs{\mathcal{S}}$ on a bisimplicial space $\dbs{M}$
is a collection $\{ \mathcal{S}^{m,n} \}_{(m,n) \in \mathbb{N}^2}$ such that
\begin{enumerate}
\item $\mathcal{S}^{m,n}$ is a sheaf on $M_{m,n}\;$;
\item for all morphisms $f \in Hom_{\Delta^2}((k,l), (m,n))$ we
are given $\tilde{f}$-morphisms
$$\tilde{f}^{\ast}:\mathcal{S}^{k,l} \to \mathcal{S}^{m,n},$$
\end{enumerate}
such that $\widetilde{f \circ g}^{\ast} = \tilde{f}^{\ast} \circ \tilde{g}^{\ast}$, when it is defined.
\end{definition}

The next definition introduces the open coverings that 
behaves well for the study of bisimplicial sheaves.
\begin{definition}\label{open_cover}
An open cover of a bisimplicial space $X$ is family
$\mathcal{U}_{\bullet \bullet} = \{ \mathcal{U}_{(m,n)} \}_{m, n \in \mathbb{N}}$				
such that $\mathcal{U}_{(m, n)} = \{ U^{m,n}_i \}_{i \in I_{(m,n)}}$
																																is an open cover of the space $X_{(m,n)}$.
																																The cover is said to be bisimplicial if $I_{\bullet \bullet} = \{ I_{(m,n)} \}$
is a bisimplical set such that for all $f \in Hom_{\Delta^2}((k,l),(m,n))$
and for all $i \in I_{(m,n)}$ we have that $\tilde{f}(U^{(m,n)}_i) \subseteq U^{(k,l)}_{\tilde{f}(i)}$.
\end{definition}

It is obvious that a randomly chosen open cover of $\dbs{M}$ could be far away to be bisimplicial. Nevertheless, as the following lemma shows, given an open cover of a bisimplicial space we can form a bisimplicial cover $bs(\mathcal{U}_{\bullet\bullet})$ that
\emph{refines} the original one.
\begin{lemma}\label{bscover}
To any open cover $\mathcal{U}^{\bullet \bullet}$ of a bisimplicial set, 
there is a \emph{naturally} associated bisimplicial open cover 
$bs(\mathcal{U}^{\bullet \bullet})$.
\end{lemma}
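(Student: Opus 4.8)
The plan is to realize $bs(\mathcal{U}^{\bullet\bullet})$ as the \emph{cofree} bisimplicial refinement of the given cover, obtained by pulling $\mathcal{U}^{\bullet\bullet}$ back along \emph{every} structure map of the underlying bisimplicial space $X_{\bullet\bullet}$ and intersecting. Writing the given cover as $\mathcal{U}_{(m,n)}=\{U^{(m,n)}_i\}_{i\in I_{(m,n)}}$, I would first define, for each bidegree $(m,n)$, the new index set
\begin{equation*}
\widehat{I}_{(m,n)}:=\prod_{f\colon (p,q)\to (m,n)} I_{(p,q)},
\end{equation*}
the product running over all morphisms of $\Delta^2$ with target $(m,n)$; an element $s\in\widehat{I}_{(m,n)}$ is thus a rule assigning to each $f\colon (p,q)\to (m,n)$ an index $s(f)\in I_{(p,q)}$. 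For a morphism $\phi\colon (k,l)\to (m,n)$ I set $\tilde{\phi}(s)(g):=s(\phi\circ g)$ for every $g\colon (p,q)\to (k,l)$, and I attach to $s$ the subset
\begin{equation*}
W^{(m,n)}_{s}:=\bigcap_{f\colon (p,q)\to (m,n)}\tilde{f}^{-1}\big(U^{(p,q)}_{s(f)}\big)\subseteq X_{(m,n)}.
\end{equation*}

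First I would check that $\widehat{I}_{\bullet\bullet}=\{\widehat{I}_{(m,n)}\}$ is a genuine bisimplicial set. The assignment $\phi\mapsto\tilde{\phi}$ is contravariant, since for composable $\psi,\phi$ one has $\tilde{\psi}\big(\tilde{\phi}(s)\big)(g)=\tilde{\phi}(s)(\psi g)=s(\phi\psi g)=\widetilde{\phi\psi}(s)(g)$; no choices enter and all relations of $\Delta^2$ are inherited from composition. That $bs(\mathcal{U}^{\bullet\bullet})$ refines $\mathcal{U}^{\bullet\bullet}$ and covers is then immediate: the factor indexed by $f=\mathrm{id}_{(m,n)}$ gives $W^{(m,n)}_{s}\subseteq U^{(m,n)}_{s(\mathrm{id})}$, so every member refines a member of $\mathcal{U}_{(m,n)}$; and given $x\in X_{(m,n)}$ one chooses, for each $f\colon (p,q)\to(m,n)$, an index $s(f)$ with $\tilde{f}(x)\in U^{(p,q)}_{s(f)}$ (possible since $\mathcal{U}_{(p,q)}$ covers $X_{(p,q)}$), so that $x\in W^{(m,n)}_{s}$.

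The compatibility requirement of Definition \ref{open_cover} is where the cofree construction pays off, and I expect it to be the easy half. Given $\phi\colon (k,l)\to (m,n)$ and $s\in\widehat{I}_{(m,n)}$ I must verify $\tilde{\phi}\big(W^{(m,n)}_{s}\big)\subseteq W^{(k,l)}_{\tilde{\phi}(s)}$. For $x\in W^{(m,n)}_{s}$ and any $g\colon (p,q)\to (k,l)$, functoriality gives $\tilde{g}\big(\tilde{\phi}(x)\big)=\widetilde{\phi g}(x)$, and since $\phi g$ is itself a morphism into $(m,n)$ the defining intersection yields $\widetilde{\phi g}(x)\in U^{(p,q)}_{s(\phi g)}=U^{(p,q)}_{\tilde{\phi}(s)(g)}$; as $g$ is arbitrary this says exactly $\tilde{\phi}(x)\in W^{(k,l)}_{\tilde{\phi}(s)}$. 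Hence the refinement is bisimplicial in the sense required, and the whole verification rests only on the single identity $\widetilde{\phi g}=\tilde{g}\,\tilde{\phi}$.

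The hard part, and the only place the argument is not purely formal, will be ensuring that each $W^{(m,n)}_{s}$ is genuinely \emph{open}: a priori it is an intersection of infinitely many opens, because there are infinitely many morphisms into $(m,n)$ (already through the degeneracies $[p]\twoheadrightarrow[m]$ with $p$ arbitrarily large). My plan to control this is to cut the product down to a cofinal finite family using the epi–mono factorization in $\Delta^2$: every $f$ factors as $\iota\pi$ with $\iota$ a face (monomorphism, of which there are finitely many into $(m,n)$) and $\pi$ a split degeneracy, so that $\tilde f=\tilde\pi\,\tilde\iota$ and the conditions attached to the $\pi$-part should be absorbed through the simplicial identities $\tilde\epsilon\,\tilde\eta=\mathrm{id}$ relating degeneracies to faces; if the degenerate conditions cannot be shown redundant outright, I would instead first refine $\mathcal{U}^{\bullet\bullet}$ to a locally finite cover. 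I expect this openness/finiteness reduction, rather than the functoriality of $\widehat{I}_{\bullet\bullet}$ or the inclusion above, to be the main obstacle, and I would isolate it as the technical heart of the proof.
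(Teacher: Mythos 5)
Your cofree construction is, in spirit, exactly the paper's: the paper's index family $\Lambda_{m,n}$ is precisely a product of the $I_{k,l}$ over morphisms into $([m],[n])$, its structure maps $\tilde g(\lambda')(f)=\lambda'(g\circ f)$ are your $\tilde\phi(s)(g)=s(\phi\circ g)$, and its verifications of the covering property and of $\tilde g(V^{m',n'}_{\lambda'})\subseteq V^{m,n}_{\tilde g(\lambda')}$ are word for word yours. The one difference is decisive, however: the paper indexes only over the \emph{strictly increasing} morphisms, $\Pc_{m,n}^{k,l}=\hom_{\Delta'^2}(([k],[l]),([m],[n]))$ with $(k,l)\leq(m,n)$. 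These are identified with pairs of nonempty subsets of $[m]$ and $[n]$, so there are only $(2^{m+1}-1)(2^{n+1}-1)$ of them; each $V^{m,n}_{\lambda}$ is then a \emph{finite} intersection of open sets and openness is automatic. Restricting to injective maps is also legitimate for the intended use: the index family then carries a contravariant action of $\Delta'^2$ only (note that $\lambda'(g\circ f)$ makes no sense for non-injective $g$, since $g\circ f$ leaves $\Pc_{m',n'}$), i.e.\ compatibility with face maps, which is all that the \v{C}ech differentials $d_h$, $d_v$ ever invoke.

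By contrast, your product over \emph{all} of $\Delta^2$ makes $W^{(m,n)}_{s}$ an infinite intersection — there are surjections $([p],[q])\to([m],[n])$ for every $p\geq m$, $q\geq n$ — and neither of your proposed repairs closes this gap. The degeneracy conditions are not redundant: for $f=\iota\pi$ with $\pi$ a nontrivial surjection, the condition $\tilde f(x)\in U^{(p,q)}_{s(f)}$ constrains $x$ through the cover $\mathcal{U}_{(p,q)}$ of the \emph{higher-dimensional} space $X_{(p,q)}$, and that cover is arbitrary and unrelated to the covers at lower levels, so no simplicial identity of the form $\tilde\epsilon\,\tilde\eta=\mathrm{id}$ can convert such a condition into a face condition. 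Local finiteness does not help either: it bounds how many members of one cover meet a given neighborhood, but your intersection runs over infinitely many distinct maps $\tilde f$ into infinitely many distinct spaces, so it remains a countable intersection of opens (a $G_\delta$) with no reason to be open. The missing idea is precisely to discard the degeneracies from the indexing category — equivalently, to run your cofree construction over the subcategory of injective morphisms — which is what the paper does, following Tu.
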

\begin{proof}
Let us consider $\mathbb{N}^2$ ordered by the lexicographic order.	
Let $\Pc_{m,n}^{k,l}: = \hom_{\Delta'^2}(([k],[l]),([m],[n]))$ and put 
$\Pc_{m,n} = \bigcup_{(k,l) \leq (m,n)} \Pc_{m,n}^{k,l}$. As in \cite[Sec. 4.1]{tu},
the set $\Pc_{m,n}$ can be identified with the set of pairs of 
nonempty subsets of $[m] \times [n]$, then the cardinality of 
$\Pc_{m,n}$ is $(2^{m+1}-1)(2^{n+1}-1)$.
Let 
$$\Lambda =
  \left\{\lambda: \Pc_{m,n} \to \bigcup_{k,l} I_{k, l} \; : \; 
\lambda(\Pc_{m,n}^{k,l}) \subseteq I_{k,l} \; ,\; \forall \; (k,l) \leq (m,n)  \right\},
$$
and for $\lambda \in \Lambda_{m,n}$ define 
$$V_{\lambda}^{m,n} = \bigcap_{(k,l) \leq (m,n)} \bigcap_{f \in \Pc_{(m,n)}^{(k,l)}}
\tilde{f}^{-1}(U_{\lambda(f)}^{(k,l)}).$$
It is clear that $(V_{\lambda}^{m,n})_{\lambda \in \Lambda_{(m,n)}}$
is an open cover of $M_{m,n}$. In fact, let $x \in M_{m,n}$ and let 
$f \in \Pc_{m,n}$. Since $\tilde{f}: M_{m,n} \to M_{k,l}$ and $\{ U_{i}^{k,l} \}_{i \in I_{k,l}}$
is a covering, then $\tilde{f}(x) \in U_i^{k,l}$ for some (non unique) $i \in I_{k,l}$.
If we name this $i$ as $\lambda(f)$, then we have a function 
$\lambda$ in $\Lambda_{m,n}$ and , by the same definition,
$x \in \Lambda_{m,n}$.
Now we are going to show that $V^{\bullet \bullet} = \{ V_{m,n} \}$
is an open bisimplicial covering of $M_{\bullet \bullet}$.
To do this we need to define a bisimplicial structure on 
$\Lambda_{\bullet \bullet}$.
Let $g \in Hom_{\Delta'^{2}}(([m],[n]),([m'],[n']))$, then we have
a continuous map $\tilde{g}: M_{m',n'} \to M_{m,n}$ and set map 
$\tilde{g}:I_{m',n'} \to I_{m,n}$ (the use of the same notation 
will be clear from the context).
Let $\tilde{g}: \Lambda_{m', n'} \to \Lambda_{m,n}$ defined as follows; 
given $\lambda' \in \Lambda_{m', n'}$
take $\tilde{g}(\lambda) \in \Lambda_{m,n}$ as the map defined by
$\tilde{g}(\lambda')(f) = \lambda'(g \circ f)$.

\noindent Let $x \in V_{\lambda'}^{m',n'}$, $(k,l) \leq (m,n)$
and $f \in \Pc_{m,n}^{k,l}$.
Since $g \circ f : ([k],[l]) \to ([m'],[n'])$
then $\widetilde{(g \circ f)}(x) \in U_{\lambda'(g \circ f)}^{k,l}$,
that is, $\tilde{f}(\tilde{g}(x)) \in U_{\tilde{g}(\lambda')(f)}^{k,l}$.
This mean that $\tilde{g}(x) \in \tilde{f}^{-1}(U^{k,l}_{\tilde{g}(\lambda')(f)})$
and therefore $\tilde{g}(x) \in U_{\tilde{g}(\lambda')}^{m,n}$.
With the above we have proved that $\tilde{g}(U^{m',n'}_{\lambda'}) \subseteq U_{\tilde{g}(\lambda')}^{m,n}$ 
or $\tilde{g}(\lambda') \in \Lambda_{m,n}$.
\end{proof}
\begin{obs}{\bf Notation}\\
Let $\dbs{M}$ a bisimplicial set and let $\ubs{\mathcal{U}}$
be an open cover of $\dbs{M}$. In the settings of the proof
of lemma \ref{bscover}, given a pair of non negative integers $(m,n)$ we will denote the element $([m],[n])  \in \Delta^2$ by $(m,n)$. If $\lambda \in \Lambda_{m,n}$ then $\lambda$ satisfies tjhe following conditions:
\begin{enumerate}
\item $\lambda$ is a map from $\Pc_{m,n}$ to $\bigcup_{k,l} I_{k,l}\quad$,
\item for all pair $(k,l)$ of non negative integers, if $(k,l) \leq (m,n)$ then $\lambda(\Pc_{m,n}^{k,l}) \subseteq I_{k,l}\quad$.
\end{enumerate}
By definition of the category $\Delta'$, if $f \in \Pc_{m,n}^{k,l}$ then $f:= (f_1, f_2) $ with $f_1: [k] \to [m]$ and $f_2: [l] \to [n]$ are strictly  increasing. It follows that they are one to one and hence $f_{1}([k])$ and $f_{2}([l])$ are subsets of $[m]$ and $[n]$, respectively, of cardinality $k+1$ and $l+1$, respectively.

The above reasoning permit us to identify any map $f \in \Pc_{m,n}$ with a pair of non empty subsets of $\Pc([m])$ and $\Pc([n])$, and, therefore, any $\lambda \in \Lambda_{m,n}$ with a matrix array 
$[\lambda_{S,T}]$ of size $(2^{m+1}-1)(2^{n+1}-1)$ with $S$ and $T$ varying over $\Pc([m])-\{\emptyset\}$ and $\Pc([n])-\{\emptyset\}$, respectively, and $\lambda_{S,T}$ stands for $\lambda(S,T)$. Here we can order the pair of subsets $(S, T)$ by a lexicographic like order in the following way:
\begin{equation}\label{lexy_order}
(S_1, T_1) \preccurlyeq (S_2, T_2)\quad \text{if and only if}\quad
\begin{cases}
|S_1| \leq |S_2|, & \\
|S_1| = |S_2|\quad  \text{and} \quad S_1 \preccurlyeq_L S_2, & \\
|S_1| = |S_2|\quad  \text{and} \quad |T_1| < |T_2| &  \\
|S_1| = |S_2|,\quad |T_1|=|T_2| \quad \text{and} \quad T_1 \preccurlyeq_L T_2. &
\end{cases}
\end{equation}
where $\preccurlyeq_L$ stands for the lexicographic order when $S$ and $T$ are displayed in increasing order.
\end{obs}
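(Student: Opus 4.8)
The plan is to treat the Observation as three linked combinatorial identifications together with a check that the relation \eqref{lexy_order} is a genuine total order. Conditions (1) and (2) are simply the unwinding of the definition of $\Lambda_{m,n}$ given in the proof of Lemma \ref{bscover}, so there is nothing to prove there; the actual content is the identification of $\Pc_{m,n}$ with pairs of nonempty subsets, the resulting identification of $\Lambda_{m,n}$ with matrix arrays of size $(2^{m+1}-1)(2^{n+1}-1)$, and the well-definedness of $\preccurlyeq$.

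First I would record the basic bijection for a single factor. Since $\Delta'$ is the subcategory of $\Delta$ whose morphisms are the \emph{injective} order-preserving maps, a morphism $f_1\in\hom_{\Delta'}([k],[m])$ is exactly a strictly increasing map $[k]\to[m]$; being injective, its image $f_1([k])$ is a subset of $[m]$ of cardinality $k+1$. I would then show that $f_1\mapsto f_1([k])$ is a bijection from $\hom_{\Delta'}([k],[m])$ onto the set of $(k+1)$-element subsets of $[m]$: it is injective because a strictly increasing map is recovered from its image by sending $i$ to the $i$-th smallest element, and surjective because any subset $\{s_0<\dots<s_k\}$ is the image of $i\mapsto s_i$. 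Letting $k$ run over $0,\dots,m$ then yields a bijection between $\bigcup_k\hom_{\Delta'}([k],[m])$ and the collection of \emph{all} nonempty subsets of $[m]$, since the cardinality determines $k$ uniquely and hence the sets $\hom_{\Delta'}([k],[m])$ are pairwise disjoint.

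Next I would combine the two factors. A morphism of $\Delta'^2$ is a pair $f=(f_1,f_2)$, so $\Pc_{m,n}^{k,l}=\hom_{\Delta'}([k],[m])\times\hom_{\Delta'}([l],[n])$, and the previous step identifies this with the product of the $(k+1)$-subsets of $[m]$ and the $(l+1)$-subsets of $[n]$. Because a morphism carries its own domain, the pieces $\Pc_{m,n}^{k,l}$ are disjoint and $\Pc_{m,n}$ is their disjoint union; assembling the factorwise bijections gives
\[
\Pc_{m,n}\;\xrightarrow{\;\sim\;}\;\bigl(\Pc([m])-\{\emptyset\}\bigr)\times\bigl(\Pc([n])-\{\emptyset\}\bigr),\qquad f\longmapsto(S,T):=(f_1([k]),f_2([l])),
\]
where $\Pc([m])$ denotes the power set of $[m]$. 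Counting the right-hand side gives $|\Pc_{m,n}|=(2^{m+1}-1)(2^{n+1}-1)$. Under this bijection a function $\lambda\colon\Pc_{m,n}\to\bigcup_{k,l}I_{k,l}$ becomes a function on pairs $(S,T)$, i.e. the array $[\lambda_{S,T}]$ with $\lambda_{S,T}:=\lambda(S,T)$, and condition (2) translates exactly into the requirement $\lambda_{S,T}\in I_{k,l}$ whenever $|S|=k+1$ and $|T|=l+1$, which is the asserted description of $\Lambda_{m,n}$.

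Finally I would verify that \eqref{lexy_order} is a total order on the set of pairs $(S,T)$: it is the lexicographic composite of the four total orders ``compare $|S|$'', ``then compare $S$ by $\preccurlyeq_L$'', ``then compare $|T|$'', ``then compare $T$ by $\preccurlyeq_L$'', each of which is total once the preceding ties are fixed, so reflexivity, antisymmetry and transitivity follow in the standard way for nested lexicographic orders. The one place that genuinely needs care — and the main obstacle — is reconciling the \emph{lexicographic} order on $\mathbb{N}^2$ used to form $\Pc_{m,n}=\bigcup_{(k,l)\le(m,n)}\Pc_{m,n}^{k,l}$ with the \emph{componentwise} bounds $k\le m$, $l\le n$ forced by nonemptiness of the hom-sets: I would observe that $\Pc_{m,n}^{k,l}=\emptyset$ unless $k\le m$ and $l\le n$, that every such componentwise pair does satisfy $(k,l)\le(m,n)$ lexicographically, and that the remaining lexicographic indices (with $k<m$ but $l>n$) contribute only empty sets. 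This ensures the disjoint union records exactly the pairs of subsets of every admissible cardinality, so that the count $(2^{m+1}-1)(2^{n+1}-1)$ and the array identification are exact.
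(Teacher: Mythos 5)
Your proposal is correct and takes essentially the same route the paper intends: the remark is stated without formal proof, and its content is exactly the identification $f=(f_1,f_2)\mapsto (f_1([k]),f_2([l]))$ of morphisms in $\Delta'^2$ with pairs of nonempty subsets that you spell out, yielding the count $(2^{m+1}-1)(2^{n+1}-1)$ and the array description of $\Lambda_{m,n}$. Your two refinements are sound and worth recording: the observation that the lexicographic order on $\mathbb{N}^2$ in $\Pc_{m,n}=\bigcup_{(k,l)\le(m,n)}\Pc_{m,n}^{k,l}$ admits indices with $l>n$ but these contribute only empty hom-sets (so the union agrees with the componentwise range $k\le m$, $l\le n$), and the totality check for \eqref{lexy_order}, whose nested-lexicographic reading implicitly corrects the first clause of the display to $|S_1|<|S_2|$ (as literally printed with $\le$, antisymmetry fails).
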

\begin{exa}
If $\lambda \in \Lambda_{1,1}$ then $\lambda = [\lambda(S, T)]_{3,3}$
and can be displayed as the matrix array
$$
\left [
\begin{matrix}
\lambda_{0,0}&\lambda_{0,1}&\lambda_{0,01}\\
\lambda_{1,0}&\lambda_{1,1}&\lambda_{1, 01}\\
\lambda_{01,0}&\lambda_{01,1}&\lambda_{01,01}\\
\end{matrix}
\right ],
$$
where, for any pair of non empty subsets $i \subseteq [m]$ and $j \subseteq [n]$, we have wrote $\lambda_{i,j} = \lambda(i,j)$.
\end{exa}

The family of all covers of a bisimplicial space $\dbs{M}$
is endowed with a partial preorder. 

\begin{definition}\label{order_of_coverings}
Let $\dbs{M}$ be a bisimplicial topological space and suppose that $\dbs{\Uc}$ and $\dbs{\Vc}$ are open covers of $\dbs{M}$, with $\Uc_{(m,n)} = \{ U^{m,n}_i \}_{i \in I_{m,n}}$ and $\Vc_{(m,n)} = \{ V^{m,n}_j\}_{j \in J_{m,n}} $. We said that $\dbs{\Vc}$ is finer than $\dbs{\Uc}$ if there is a family of maps $\theta_{m,n} : J_{m,n} \to I_{m,n}$ such that $V^{m,n}_j \subseteq U^{m,n}_{\theta(j)}$,
for all $j \in J_{m,n}$. 

If the covers are bisimplicial we take the maps $\dbs{\theta}$ bisimplicial.
\end{definition}

\subsection{\v{C}ech cohomology of double groupoids}

Let $\dbs{M}$ be a bisimplicial space, $\dbs{\Uc}$ be a 
bisimplicial open cover of $\dbs{M}$ and $\ubs{\F}$ be
a bisimplicial abelian sheaf. Let us define 
\begin{equation}
C^{m,n}_{bs}(\dbs{\Uc}, \ubs{\F}) = \prod_{i \in I_{m,n}}{\F^{m,n}(U^{m,n}_i)},
\end{equation}
and  horizontal and vertical differentials given by
\begin{align}
d_h^{m,n}: C^{m,n}_{bs}(\dbs{\Uc}, \ubs{\F}) \to C^{m,n+1}_{bs}(\dbs{\Uc}, \ubs{\F})
\quad \text{with} \quad (d_h^{m,n}c)_i = 
\sum_{k = 0}^{n+1} (-1)^k \widetilde{\epsilon^{m,n+1}_{k,h}}^\ast c_{\widetilde{\epsilon^{m,n+1}_{k,h}}(i)},\\
d_v^{m,n}: C^{m,n}_{bs}(\dbs{\Uc}, \ubs{\F}) \to C^{m+1,n}_{bs}(\dbs{\Uc}, \ubs{\F})
\quad \text{with} \quad (d_v^{m,n}c)_i =
\sum_{k = 0}^{m+1} (-1)^k \widetilde{\epsilon^{m+1,n}_{k,v}}^\ast c_{\widetilde{\epsilon^{m+1,n}_{k,v}}(i)}.
\end{align}
The $\ast$ as a superscript is explained in definition \ref{bisim-sheaf} and
$\widetilde{\epsilon^{m,n+1}_{k,h}}^\ast c_{\widetilde{\epsilon^{m,n+1}_{k,h}}(i)}$ is the \textit{restriction}
of the section $c_{\widetilde{\epsilon^{m,n+1}_{k,h}}(i)} \in \F^{m,n}(U^{m,n}_{\widetilde{\epsilon^{m,n+1}_{k,h}}(i)})$ to a section in $\F^{m, n+1}(U^{m,n+1}_i)$, likewise 
$\widetilde{\epsilon^{m+1,n}_{k,v}}^\ast c_{\widetilde{\epsilon^{m+1,n}_{k,v}}(i)}$ is the restriction
of the section $c_{\widetilde{\epsilon^{m+1,n}_{k,v}}(i)} \in \F^{m,n}(U^{m,n}_{\widetilde{\epsilon^{m+1,n}_{k,v}}(i)})$ to a section in $\F^{m, n+1}(U^{m,n+1}_i)$.

It is clear that $(d_h)^2 = 0$ and, with the usual sign trick, the vertical differential 
satisfies the relations $ ((-1)^n d_v^{m,n})^2 = 0$ and $d_h^{m,n} + (-1)^{n+1}d_v^{m,n+1} = (-1)^n d_v^{m,n} + d_h^{m+1,n} = 0$.
From this remark the collection $\{C_{bs}^{m,n}\}_{m,n \in \mathbb{N}}$ is a well defined double complex and we may consider the associated total complex $\Tot (\dbs{\mathcal{U}}, \ubs{\mathcal{F}})$  that allow us to define the total cohomology groups 
$\Ho_{\Tot}^{m}(\dbs{\mathcal{U}}, \ubs{\mathcal{F}})$.

\begin{definition}\label{bisimplicial_cohomology_groups}
Let $\dbs{M}$ be a bisimplicial space, $\dbs{\Uc}$ be a 
bisimplicial open cover of $\dbs{M}$ and $\ubs{\F}$ be
a bisimplicial abelian sheaf. The $n$-th \emph{bisimplicial cohomology group}
of $\dbs{M}$ with coefficients in $\ubs{\F}$ is defined as the direct limit
\begin{equation}
\check{\Ho}^n(\dbs{M}; \ubs{\F}) : = \underset{\longrightarrow}{\lim} \Ho^n_{\Tot} (\dbs{\mathcal{U}}; \ubs{\F}),
\end{equation}
where $\dbs{\mathcal{U}}$ runs over all open covers of $\dbs{M}$ whose $N$-skeleton 
admits an $N$-truncated simplicial structure for some $N \geq n+1$.
\end{definition}

The most important bisimplicial sheaf for our purposes
is the one constructed from a double groupoid acting along a map
into the total base of the double groupoid.

\begin{definition}
Let $\T = (\B; \Vc, \Hc; \Pc )$ be a topological double groupoid.
Then a $\T$-module  is a bundle of topological abelian groups
$p: \Kc \to \Pc$  such that
\begin{enumerate}
\item $\Kc$ is endowed with a left $\T$-action (see \ref{double groupoid left action}),
\item $\Vc$ and $\Hc$ acts on $\Kc$ by group bundle automorphisms.
\end{enumerate}
\end{definition}

\begin{remark}\label{associated_simplicial_to_action}
Let $\T = (\F; \Vc, \Hc; \Pc )$ be a topological double groupoid,
and let $\gamma: \Kc \to \Pc$  be a $\T$-module. Let us denote
by $p_{m,n}: \F^{m,n} \to \Pc$ the map $(F_{ij}) \mapsto p_{m,n}(F_{ij}) = bl(F_{m,1})$
and let $\F_{m,n} = \Kc \pfib{\gamma}{p_{m,1}} \F^{m,n}$.
This construction generate a new bisimplicial set,
indeed, with any $f: ([m], [n]) \to ([k], [l])$ in $\Delta^2$ 
we associate a map $\tilde{f}: \F_{k,l} \to \F_{m,n}$ given by
\begin{equation*}
\tilde{f}(K,\langle F_{i,j}\rangle_{0 \leq i \leq k , \; 0 \leq j \leq l} ) = 
(K, \langle F_{f(i,j)} \rangle_{1 \leq i \leq m , \; 1 \leq j \leq n})
\end{equation*}
It is no difficult to show that with this map we have a contravariant functor 
from $\Delta^2$ to $\mathcal{S}ets$.

\end{remark}

\begin{obs}\label{associated_sheaf}
In the discrete case we can recover the double groupoid cohomology, defined in \ref{discrete_double_groupoid_cohomology}, from the bisimplicial cohomology  \ref{bisimplicial_cohomology_groups} just defined. In fact, let us consider a topological double groupoid $\T= (\B,;\Hc, \Vc; \Pc)$ and suppose that $\ubs{\F}$ is the bisimplicial sheaf associated to a $\T$-module. 
From \ref{associated_simplicial_to_action}, we can construct
a bisimplicial set $\{ \Kc \pfibrado{p}{\gamma} \F^{m,n} \}$ and then, the projection map
\begin{equation}
\Pi^{m,n}:\Kc \pfibrado{p}{\gamma} \F^{m,n} \to \F^{m,n},
\end{equation}
is a bisimplicial map. If we denote by $\A^{m,n}$ the sheaf of germs of local continuous sections of $\Pi^{m,n}$, then $\ubs{\A} = \{\A^{m,n}\}_{m,n}$ becomes a bisimplicial sheaf and any section of $\Pi^{m,n}$ defined over $U \subseteq \F^{m,n}$, an open set,  can be identified with a continuous map
$\varphi:U \to \Kc$ such that $p(\varphi(\langle F_{i,j} \rangle)) = \gamma(\langle F_{i,j} \rangle)$
for any $\langle F_{i,j} \rangle \in \F^{m,n}$.  Under this bisimplicial structure, the maps $\epsilon_h^{m,n+1}:\A^{m,n} \to \A^{m,n+1}$ and $\epsilon_v^{m+1,n}:\A^{m,n} \to \A^{m+1,n}$ can be described as follows.

Given a local section $\varphi:U \to \Kc$ of $\Pi^{m,n}$
we can write $\varphi$ as
\begin{equation}
\varphi([F_{i,j}]) =l(F_{m1})^{-1} \cdots l(F_{21})^{-1} l(F_{11})^{-1} \varphi_v([F_{i,j}])
\end{equation}
or as 
\begin{equation}
\varphi([F_{i,j}]) =b(F_{m1}) b(F_{m2})  \cdots b(F_{mn}) \varphi_h([F_{i,j}])
\end{equation}
for unique $\varphi_v([F_{ij}]) \in \Kc_{tl(F_{m1})}$ and  $\varphi_h([F_{ij}]) \in \Kc_{rb(F_{mn})}$. Then $\epsilon_v^{m+1,n}\varphi$ is a germ of a local section of $\Pi ^{m+1,n}$ such that, for any $[F_{ij}] \in \F^{m+1,n}$ we have
\begin{equation}
(\epsilon_{0,v}^{m,n}\varphi)[F_{i,j}]=
l(F_{m+1,1})^{-1}l(F_{m1})^{-1} \cdots l(F_{31})^{-1} l(F_{21})^{-1} \varphi_v\begin{pmatrix}
F_{21}  & \dots & F_{2n} \\
\dots  & \dots & \dots \\
F_{m1}  & \dots & F_{mn} \\
F_{m+1,1}  & \dots & F_{m+1,n}
\end{pmatrix},
\end{equation}
if $0 < k < m+1$ then
\begin{equation}
(\epsilon_{k,v}^{m,n}\varphi)[F_{i,j}]=
l(F_{m+1,1})^{-1}l(F_{m1})^{-1} \cdots l(F_{21})^{-1} l(F_{11})^{-1} \varphi_v \begin{pmatrix}
F_{11}  & \dots & F_{1n} \\
\dots  & \dots & \dots \\
\left\{\begin{matrix}F_{k1} \\F_{k+1,1}\end{matrix}\right\}
& \dots & \left\{\begin{matrix}F_{kn} \\F_{k+1,n}\end{matrix}\right\} \\
\dots  & \dots & \dots \\
F_{m+1,1}  & \dots & F_{m+1,n}
\end{pmatrix}
\end{equation}
and 
\begin{equation}
(\epsilon_{m+1,v}^{m,n}\varphi)[F_{i,j}]=
l(F_{m+1,1})^{-1}l(F_{m1})^{-1} \cdots l(F_{21})^{-1} l(F_{11})^{-1} \varphi_v\begin{pmatrix}
F_{11}  & \dots & F_{1n} \\
\dots  & \dots & \dots \\
F_{m1}  & \dots & F_{mn} \end{pmatrix};
\end{equation}
in the same way, for any $[F_{ij}] \in \F^{m,n+1}$, the sections corresponding to the horizontal maps 
$\epsilon_{k,h}^{m,n}$ are given by
\begin{equation}
(\epsilon_{0,h}^{m,n}\varphi)[F_{i,j}]=
b(F_{m1})b(F_{m2}) \cdots b(F_{mn})b(F_{m,n+1}) \varphi_h\begin{pmatrix}
F_{12}  & \dots & F_{1,s+1} \\
\dots  & \dots & \dots \\
F_{m2}  & \dots & F_{m,n+1}
\end{pmatrix},
\end{equation}
if $0 < k < n+1$ then
\begin{equation}
(\epsilon_{k,h}^{m,n}\varphi)[F_{i,j}]=
b(F_{m1})b(F_{m2}) \cdots b(F_{mn})b(F_{m,n+1}) \varphi_h \begin{pmatrix}
F_{11}  & \dots & \left\{F_{1k} F_{1,k+1} \right\}& \dots & F_{1,n+1} \\
\dots  & \dots& \dots& \dots & \dots \\
F_{m,1}  & \dots& \left\{F_{mk}F_{m,k+1} \right\}& \dots & F_{m,n+1}
\end{pmatrix},
\end{equation}
and 
\begin{equation}
(\epsilon_{m+1,h}^{m,n}\varphi)[F_{i,j}]=
b(F_{m1})b(F_{m2}) \cdots b(F_{mn}) \varphi_h\begin{pmatrix}
F_{11}  & \dots & F_{1n} \\
\dots  & \dots & \dots \\
F_{m1}  & \dots & F_{mn} \end{pmatrix}.
\end{equation}
If we use the above expressions, together with the formulas for the horizontal and vertical co\-boun\-da\-ry map, and if we consider the largest open bisimplicial covering 
$\{ \mathcal{U}^{m,n}_{F}\}_{F \in \F^{m,n}}$ of $ \F^{m,n}$, where $ \mathcal{U}^{m,n}_{F}=\{F\}$ for every $F \in \F^{m,n}$, then we can easily see that the \v{C}ech cohomology groups coincides with the cohomology groups of the double groupoid cohomology introduced in \ref{discrete_double_groupoid_cohomology}.
\end{obs}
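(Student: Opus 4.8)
The plan is to show that, for the finest (singleton) open cover, the \v{C}ech double complex $C^{\bullet\bullet}_{bs}(\dbs{\Uc};\ubs{\A})$ of Definition \ref{bisimplicial_cohomology_groups} is termwise isomorphic to the double complex $D^{\bullet\bullet}(\T,\Kc)$ of Section \ref{discrete_cohomology_section} out of which $\Ho^{\bullet}_{\Tot}(\T,\Kc)$ is built, to verify that this isomorphism intertwines the two pairs of differentials, and finally to observe that the singleton cover is cofinal so that the direct limit defining the \v{C}ech cohomology is already attained there. Everything needed is essentially contained in the explicit face-map formulas recorded above; the work is to read them off in the right order.

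First I would identify the cochain groups. For the cover $\dbs{\Uc}=\{\,\mathcal{U}^{m,n}_{F}=\{F\}\,\}_{F\in\F^{m,n}}$ every open set is a single point, so a germ of a local section of $\Pi^{m,n}$ over $\{F\}$ is just an element of the fibre $\Kc_{\gamma(F)}$; hence
$$C^{m,n}_{bs}(\dbs{\Uc};\ubs{\A})=\prod_{F\in\F^{m,n}}\A^{m,n}(\{F\})\;\cong\;\{\,\varphi:\F^{m,n}\to\Kc\;:\;p\circ\varphi=\gamma\,\}.$$
Using Proposition \ref{double_skeleton_core_groupoid} to identify $\F^{m,n}$ with the set of composable matrices and recalling that $\gamma(F)=bl(F_{m,1})$, the fibering condition $p\circ\varphi=\gamma$ is exactly the one defining $D^{m,n}$; I would also check that the sections arising from the sheaf of germs carry the normalization built into $D^{m,n}$, so that the two cochain groups coincide (up to the edge-removal reindexing $\A^{r,s}=D^{r+1,s+1}$ of Section \ref{discrete_cohomology_section}).

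The heart of the argument is matching the differentials. By definition the \v{C}ech differentials $d_h^{m,n}$ and $d_v^{m,n}$ are alternating sums of the pullbacks $\widetilde{\epsilon^{m,n+1}_{k,h}}^{\ast}$ and $\widetilde{\epsilon^{m+1,n}_{k,v}}^{\ast}$ along the face maps of the bisimplicial sheaf. I would substitute the explicit formulas for these pullbacks recorded in the remark --- writing $\varphi$ through its left-normalized slice $\varphi_v$ for the vertical faces and through its bottom-normalized slice $\varphi_h$ for the horizontal ones --- and verify that the resulting alternating sums reproduce, term by term and with matching signs, the coboundary operators $d_V$ and $d_H$ of \eqref{cob1}--\eqref{cob2} and their higher analogues. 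The key point is that the two extremal face maps are responsible for the action-twisted boundary terms: deleting the bottom row produces the prefactor $l(\cdot)^{-1}\cdot$ appearing in $d_V$, deleting the first column produces the prefactor $b(\cdot)\cdot$ appearing in $d_H$, while the interior faces, which compose two adjacent rows or columns, produce the ``composed box'' terms. Once both families are matched, the identification is an isomorphism of double complexes and hence induces an isomorphism on the associated total complexes and their cohomology.

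The main obstacle is precisely this bookkeeping: one must simultaneously match $d_h$ with $d_H$ and $d_v$ with $d_V$ while the same section $\varphi$ is presented in the two different normalized forms $\varphi_v$ and $\varphi_h$, and one must check that the passage between these forms --- absorbing the left edges through the $\Vc$-action versus the bottom edges through the $\Hc$-action --- is consistent with the interchange of the two differentials. Given the explicit formulas this is routine but delicate in the signs and in the placement of the action prefactors. Finally, since in the discrete setting each $\F^{m,n}$ admits the singleton cover as its finest cover, this cover refines every other and is cofinal in the directed system of Definition \ref{bisimplicial_cohomology_groups}; hence the direct limit is attained at $\dbs{\Uc}$, giving $\check{\Ho}^{n}\cong\Ho^{n}_{\Tot}(\dbs{\Uc};\ubs{\A})\cong\Ho^{n}_{\Tot}(\T,\Kc)$, which is the asserted coincidence.
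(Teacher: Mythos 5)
Your strategy is essentially the one the paper itself sketches inside the remark: over the finest cover, each $\mathcal{U}^{m,n}_{F}=\{F\}$ is open in the discrete topology, a germ of a section of $\Pi^{m,n}$ over $\{F\}$ is just an element of the fibre $\Kc_{\gamma(F)}$, so $C^{m,n}_{bs}$ becomes a group of fibered maps $\varphi:\F^{m,n}\to\Kc$; the displayed face-map formulas then convert the \v{C}ech differentials $\sum_{k}(-1)^{k}\,\widetilde{\epsilon_{k}}^{\ast}$ into the coboundaries of Section \ref{discrete_cohomology_section}, and cofinality of the singleton cover in the directed system of Definition \ref{bisimplicial_cohomology_groups} settles the limit. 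Your bookkeeping of the action prefactors is correct and matches the paper's formulas: re-expressing $\varphi$ through $\varphi_v$ shows $\bigl(\widetilde{\epsilon^{m+1,n}_{m+1,v}}^{\ast}\varphi\bigr)[F_{ij}]=l(F_{m+1,1})^{-1}\cdot\varphi(\text{matrix with last row deleted})$, while the face deleting the first row carries no prefactor, and dually for the horizontal faces, which is exactly the shape of $d_V$ and $d_H$; the indexing also works out, since the \v{C}ech total complex the paper actually uses starts at bidegree $(1,1)$, consistent with your reindexing $\A^{r,s}=D^{r+1,s+1}$.

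The one step that fails as written is the termwise identification with $D^{m,n}$: you propose to ``check that the sections arising from the sheaf of germs carry the normalization built into $D^{m,n}$'', but they do not. The sheaf $\A^{m,n}$ is the sheaf of germs of \emph{all} local sections of $\Pi^{m,n}$, so over the singleton cover $C^{m,n}_{bs}$ is the group of all maps $\varphi:\F^{m,n}\to\Kc$ with $p\circ\varphi=\gamma$, with no vanishing condition on matrices containing degenerate boxes, whereas $D^{m,n}$ is the proper subgroup of normalized cochains. Hence your dictionary identifies the \v{C}ech double complex with the \emph{unnormalized} version of $\A^{\cdot\cdot}(\T)$, not with $\A^{\cdot\cdot}(\T)$ itself, and the termwise isomorphism you assert does not hold. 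The comparison is nevertheless easily completed: the inclusion of the normalized sub-double-complex into the full one is a quasi-isomorphism (apply the standard normalization theorem for cosimplicial abelian groups in each of the two simplicial directions separately, then pass to total complexes, e.g.\ by filtering), so the two total cohomologies agree. This is precisely the point the paper elides with ``we can easily see''; with that normalization lemma inserted in place of your failed check, your argument is complete and coincides with the paper's intended proof.
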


\subsection{Low dimensional cohomology}

\begin{definition}\label{extensions}
Let $(\B; \Vc, \Hc; \Pc)$ be a topological double groupoid and $\Kc \to \Pc$ an abelian group bundle. We define $Ext(\B, A)$ to be the set 
\begin{equation}\label{extension_form}
Ext(\B, \Kc) = 	\underset{\mathcal{U}}  {\underset{\longrightarrow} \lim } \; \Opext(\B[\mathcal{U}], \Kc[U])
\end{equation}
Where  $\mathcal{U} = \{ \mathcal{U}_i \}_{i \in I}$ runs over open
covers of $\Pc$ and $(\mathcal{B}[\mathcal{U}]); \Vc[\mathcal{U}], \Hc[\mathcal{U}]; \Pc[\mathcal{U}])$
is the \textit{\v{C}ech double groupoid associated to $(\B, \mathcal{U})$}.

\end{definition}

\begin{theorem}\label{class_ext_simplicial_version}
Let $(\F; \Vc, \Hc; \Pc)$ be a topological double groupoid and $p: \Kc \to \Pc$ 
be an abelian group bundle, and $\ubs{\mathcal{A}}$ the bisimplicial sheaf associate to the action of $\F$ over $\Kc$.
\begin{itemize}
\item For each open cover $\dbs{\mathcal{U}}$ of $\ubs{\F}$, there is a canonical isomorphism
\begin{equation}\label{extCech}
\Opext_{\Uc} (\F[\Uc_{00}], \Kc[\Uc_{00}]) \cong \Ho^{1}_{\Tot}(\dbs{\Uc};\ubs{\A}),
\end{equation}
where $\Opext_{\Uc} (\F[\Uc_{00}], \Kc[\Uc_{00}])$ denotes the subgroup of elements of $\Opext (\F[\Uc_{00}], \Kc[\Uc_{00}])$
consisting of extensions $1 \rightarrow \Kc[\Uc_{00}] \overset{\iota}\hookrightarrow \B \overset{\Pi}\twoheadrightarrow \F[\Uc_{00}] \to 1$
such that $\Pi$ admits a continuous lifting over each open set $U_i^{11} \subseteq \F$ ( $i \in I_{11}$).
\item The isomorphisms \ref{extCech} induces another one 
\begin{equation}
\Ext(\F, \Kc) \cong \check{\Ho}^{1}_{\Tot}(\dbs{\Uc}; \ubs{\A})
\end{equation}
\end{itemize}
\end{theorem}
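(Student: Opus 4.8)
The plan is to prove the first isomorphism cover by cover, reproducing the discrete correspondence of Theorem \ref{corresp-cocycles-opext} but replacing the single global section used in Proposition \ref{any.ext.is.smash} by a family of \emph{local} sections indexed by the degree-$(1,1)$ part of the cover, and then to obtain the second isomorphism by passing to the direct limit over covers. Throughout I fix a bisimplicial open cover $\dbs{\Uc}$ of $\ubs{\F}$ (which, after refinement, I may assume bisimplicial by Lemma \ref{bscover}) and work with the bisimplicial sheaf $\ubs{\A}$ of germs of local continuous sections of the projections $\Pi^{m,n}$ described in Remark \ref{associated_sheaf}. The guiding principle is that, specialized to the finest cover, every construction must reduce to the discrete statements via Remark \ref{associated_sheaf}.

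First I would build the map $\Opext_{\Uc}(\F[\Uc_{00}],\Kc[\Uc_{00}]) \to \Ho^1_{\Tot}(\dbs{\Uc};\ubs{\A})$. Given an extension $1 \to \Kc[\Uc_{00}] \hookrightarrow \B \twoheadrightarrow \F[\Uc_{00}] \to 1$ whose projection $\Pi$ admits continuous liftings over each $U_i^{11}\subseteq \F$, choose for every $i \in I_{1,1}$ a continuous section $s_i$ of $\Pi$ over $U_i^{11}$. Exactly as in the computation behind Proposition \ref{construct-double-groupoid}, each $s_i$ produces local sheaf sections $\tau_i$ and $\sigma_i$ measuring the failure of $s_i$ to respect the horizontal and vertical products, by comparing $s_i(\{FG\})$ with $s_i(F)\,s_i(G)$ and its vertical analogue, the discrepancy landing in $\Kc$; these occupy the two summands of the degree-one part $\Tot^1(\dbs{\Uc};\ubs{\A})$, matching $\tau$ to the horizontal piece and $\sigma$ to the vertical piece just as in Lemma \ref{1cociclos}. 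The bisimplicial face maps $\tilde{\epsilon}_{k,h}$, $\tilde{\epsilon}_{k,v}$ encode the multiplication and projection of composable configurations, so that the coboundaries $d_h,d_v$ of the \v{C}ech complex reproduce the double-groupoid coboundaries; consequently the local cocycle identities \eqref{taucocyclo}, \eqref{sigmacocyclo}, \eqref{sigma-tau-compatibles}, which follow from associativity of the products in $\B$ and the interchange law \eqref{interchange_law}, taken together with the comparison of $s_i$ and $s_j$ on overlaps, are precisely the assertion that $(d_h+d_v)$ annihilates the assembled cochain. A different choice of local sections replaces each $s_i$ by $\lambda_i\cdot s_i$ for a local $\Kc$-valued function $\lambda_i$, altering the cochain by $d^0$ of the resulting $0$-cochain; thus the cohomology class is well defined, and the local analogue of Lemma \ref{eq.ext.cohom.cocycles} shows that equivalent extensions give cohomologous classes, so the map descends to $\Opext_{\Uc}$.

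For the inverse I would reverse this construction. Starting from a total $1$-cocycle I read off the local cocycle data $(\sigma_i,\tau_i)$, form the local smash products $\Kc\,\sharp_{\sigma_i,\tau_i}\,\F$ over each open set by Proposition \ref{construct-double-groupoid}, and glue them along overlaps using the \v{C}ech components of the cocycle as transition isomorphisms. The total $1$-cocycle condition guarantees simultaneously that each local piece is a genuine double groupoid and that the transition data is compatible enough to glue into a topological double groupoid $\B$ over $\F[\Uc_{00}]$ with continuous local liftings built in, so $\B \in \Opext_{\Uc}$. That the two assignments are mutually inverse, and that cohomologous cocycles yield equivalent extensions, follows by the same computations as in the proof of Theorem \ref{corresp-cocycles-opext}, now carried out locally and patched.

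Finally, for the second isomorphism I would pass to the direct limit over all open covers. The refinement maps of Definition \ref{order_of_coverings} are compatible with the isomorphisms \eqref{extCech}, since refining a cover refines both the chosen local sections and their overlap data; hence they induce a map on the colimits. The essential point, and the main obstacle, is to identify the limit of the subgroups $\Opext_{\Uc}$ with $\Ext(\F,\Kc)$ of Definition \ref{extensions}: one must show that every extension in $\Opext(\F[\Uc],\Kc[\Uc])$ becomes, after passing to a sufficiently fine cover, one admitting continuous local liftings over the degree-$(1,1)$ cover, i.e.\ that the $\Opext_{\Uc}$ exhaust $\Opext$ in the limit. This is exactly where the topological hypotheses enter: because the frame map is an open surjection (resp.\ a surjective submersion), it admits local sections, and refining the cover produces the required continuous liftings of $\Pi$. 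Granting this exhaustion, the isomorphisms \eqref{extCech} assemble into the desired isomorphism $\Ext(\F,\Kc)\cong \check{\Ho}^1_{\Tot}(\dbs{\Uc};\ubs{\A})$.
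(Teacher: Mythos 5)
Your overall strategy is the same as the paper's: extract a \v{C}ech total $1$-cocycle from continuous local sections of $\Pi$ (the paper's Step 3), rebuild an extension from a cocycle by a gluing construction (the paper's Step 4, where the gluing is realized concretely as a quotient $\B'/\sim$ of a disjoint union, with auxiliary functions $\psi^{h}_{ikj},\psi^{v}_{ikj}$ proving that $\sim$ is an equivalence relation), and then pass to the direct limit. But there is a genuine gap at the very first step. The extension you are handed has projection $\Pi\colon \B \twoheadrightarrow \F[\Uc_{00}]$, a \emph{\v{C}ech double groupoid} whose boxes carry $I_{0,0}$-indices, whereas the group $\Ho^{1}_{\Tot}(\dbs{\Uc};\ubs{\A})$ is computed from a cover of $\ubs{\F}$ and a sheaf living on $\ubs{\F}$. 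Your sections $s_i$ ``of $\Pi$ over $U^{11}_i \subseteq \F$'' only typecheck when $\Uc_{00}$ is the trivial cover, so that $\F[\Uc_{00}]\cong\F$; for a general $\Uc_{00}$ what you actually obtain are sections over preimages inside $\F[\Uc_{00}]$, hence cocycle data carrying extra $I_{0,0}$-indices, i.e.\ a class on a cover of $\ubs{\F[\Uc_{00}]}$ rather than an element of $\Ho^{1}_{\Tot}(\dbs{\Uc};\ubs{\A})$. The paper's Step 2 exists precisely to close this gap: it builds a coarser cover of $\ubs{\F[\Uc]}$ whose index sets $J_{m,n}$ are tuples of $I_{0,0}$-indices together with an $I_{m,n}$-index, pulls $\ubs{\A}$ back along $\ubs{\F[\Uc]}\to\ubs{\F}$, and proves via the explicit bijections $\Lambda_{2,1}\leftrightarrow\Gamma_{2,1}$, $\Lambda_{1,2}\leftrightarrow\Gamma_{1,2}$ that the total $\Ho^{1}$ is unchanged; only then may one ``assume $\Uc_{00}=\Pc$'' and run your local-section argument. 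Without this comparison (or some substitute for it) the first isomorphism is not proved. Relatedly, the normalization facts you use tacitly--that cocycle components are insensitive to degenerate indices--are the content of the paper's Step 1 degeneracy computation and also require proof in the \v{C}ech setting.

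On the second bullet you correctly isolate the exhaustion problem, namely identifying $\varinjlim_{\Uc}\Opext_{\Uc}$ with $\varinjlim_{\Uc}\Opext(\F[\Uc],\Kc[\Uc]) = \Ext(\F,\Kc)$; the paper itself dispatches this with a bare ``passing to the limit,'' so flagging it is to your credit. However, your proposed justification is not correct as stated: the map that must acquire continuous local sections after refinement is the projection $\Pi$ of an \emph{arbitrary} element of $\Opext(\F[\Uc],\Kc[\Uc])$, not the frame map. The theorem's hypotheses impose no openness condition on $\Pi$, and the frame map belongs to the motivational discussion of slim decompositions, not to this statement, so invoking its openness proves nothing about $\Pi$. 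As written, then, your argument yields at best an isomorphism of $\varinjlim_{\Uc}\Opext_{\Uc}$ with $\check{\Ho}^{1}_{\Tot}(\dbs{\Uc};\ubs{\A})$ (granting the first bullet and compatibility with refinements, and modulo matching the indexing of the two limits--covers of $\Pc$ on the extension side versus bisimplicial covers of $\ubs{\F}$ on the cohomology side), together with an unproved claim that this limit equals $\Ext(\F,\Kc)$.
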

\begin{proof}
Since we need a detailed study of simplicial covers and to give an explicit description of the two cocycles, the proof is divided in several stages.

{\bf Step 1.} \emph{Description of total one cocycles.}\\
If $\dbs{\mathcal{U}}$ is an open cover of $\dbs{\F}$, we know by definition that the double complex which gives rise to the C\v{e}ch cohomology of the double groupoid is
\begin{equation}\label{complex_simplicial_double_cohomology}
\xymatrix{
&\vdots&\\
&\C^{3,1}(\dbs{bs(\Uc)},\ubs{\A}) \ar[u]^{d_v^{3,1}} \ar[r]_{d_h^{3,1}}& \C^{3,2}(\dbs{bs(\Uc)},\ubs{\A}) \\
 &\C^{2,1}(\dbs{bs(\Uc)},\ubs{\A}) \ar[u]^{d_v^{2,1}} \ar[r]_{d_h^{2,1}} & \C^{2,2}(\dbs{bs(\Uc)},\ubs{\A}) \ar[r]_{d_h^{2,2}} \ar[u]^{d_v^{2,2}}& \C^{2,3}(\dbs{bs(\Uc)},\ubs{\A})\\
&\C^{1,1}(\dbs{bs(\Uc)},\ubs{\A}) \ar[u]^{d_v^{1,1}}\ar[r]_{d_h^{1,1}} & \C^{1,2}(\dbs{bs(\Uc)},\ubs{\A}) \ar[u]^{d_v^{1,2}}
\ar[r]_{d_h^{1,2}} & \C^{1,3}(\dbs{bs(\Uc)},\ubs{\A}) \ar[r]_{\quad \quad\quad d_h^{1,3}}\ar[u]^{d_v^{1,3}}& \cdots
}
\end{equation}
Hence, since the open cover $bs(\dbs{\Uc})$ is the bisimplicial refinement of $\dbs{\Uc}$ defined in \ref{bscover},the first terms of the total chain complex are
\begin{align}
0 \to \Tot^1(bs(\dbs{\Uc}), \ubs{\A}) \to \Tot^2(bs(\dbs{\Uc}), \ubs{\A})\to \Tot^3(bs(\dbs{\Uc}), \ubs{\A}) \to \ldots
\end{align}
where
\begin{align}
&\Tot^1(bs(\dbs{\Uc}), \ubs{\A})=\prod\limits_{\lambda \in \Lambda_{1,1}} \A^{1,1}(U^{1,1}_{\lambda}),\\
&\Tot^2(bs(\dbs{\Uc}), \ubs{\A})= \prod\limits_{\lambda \in \Lambda_{2,1}} \A^{2,1}(U^{2,1}_{\lambda}) \oplus \prod\limits_{\lambda \in \Lambda_{1,2}} \A^{1,2}(U^{1,2}_{\lambda}),\\
&\Tot^3(bs(\dbs{\Uc}), \ubs{\A})= \prod\limits_{\lambda \in \Lambda_{3,1}} \A^{3,1}(U^{3,1}_{\lambda}) \oplus \prod\limits_{\lambda \in \Lambda_{2,2}} \A^{2,2}(U^{2,2}_{\lambda}) \oplus \prod\limits_{\lambda \in \Lambda_{1,3}} \A^{1,3}(U^{1,3}_{\lambda}).
\end{align}

A two cocycle in $Z^2(\dbs{bs(\Uc)},\ubs{\A})$ is a pair 
$(\sigma, \tau)$ where $\sigma$ and $\tau$ are families of the form $\sigma=(\sigma_\lambda)_{\lambda \in \Lambda_{2,1}}$ and $\tau = (\tau_{\lambda})_{\lambda \in \Lambda_{1,2}}$ such that $d_{\Tot}^2(\sigma, \tau) = 0$.  This equation is equivalent to
\begin{align*}
d_v^{2,1}(\sigma) &= 0,\\
d_h^{2,1}(\sigma) + d_v^{1,2}(\tau) &= 0,\\
d_h^{1,2}(\tau) &= 0;
\end{align*}
which amounts to the following three ones
\begin{equation}\label{two_cocycle_1}
\sum_{k=0}^3 (-1)^k \widetilde{\epsilon_{k,v}^{3,1}}^{*}(\sigma_{\widetilde{\epsilon_{k,v}^{3,1}}(\lambda)}) = 0, \quad\text{for any} \quad \lambda \in \Lambda_{3,1};
\end{equation}
\begin{equation}\label{two_cocycle_2}
\sum_{k=0}^{3}(-1)^k\widetilde{\epsilon_{k,h}^{2,2}}^{*} (\sigma_{\widetilde{\epsilon^{2,2}_{k,h}}}) + \sum_{k=0}^{3}(-1)^k\widetilde{\epsilon_{k,v}^{2,2}}^{*}(\tau_{\widetilde{\epsilon_{k,v}^{2,2}}(\lambda)}) =0, \quad  \text{for any} \quad \lambda \in \Lambda_{2,2}, \quad \text{and}
\end{equation}
\begin{equation}\label{two_cocycle_3}
\sum_{k=0}^{3}(-1)^{k}\widetilde{\epsilon_{k,h}^{1,3}}^{*}(\tau_{\widetilde{\epsilon_{k,h}^{1,3}}(\lambda)}) = 0, \quad \text{for any} \quad \lambda \in \Lambda_{1,3}.
\end{equation}
In order to obtain more concrete information about the two cocycles, we will now analyze each of those equations .

The equation \ref{two_cocycle_1} is valid in the open set
$$U_{\lambda}^{3,1}=\bigcap_{(l,k)\leq (3,1)} \bigcap_{f \in \Pc_{3,1}^{l,k}} f^{-1}(U_{f(\lambda)}^{3,1}),$$
of $\F^{(3,1)}$ and if we apply $\widetilde{\eta^{3,1}_{0,v}}^{\ast}$ to both sides of it, we obtain 
\begin{equation}
\sigma_{\lambda_{[3]\setminus 0,[1]}}= \widetilde{\eta^{3,1}_{0,v}\epsilon^{3,1}_{1,v}}^{*}(\sigma_{\lambda_{[3]\setminus 1,[1]}}) - 
\widetilde{\eta^{3,1}_{0,v}\epsilon^{3,1}_{2,v}}^{*}(\sigma_{\lambda_{[3]\setminus 2,[1]}}) + 
\widetilde{\eta^{3,1}_{0,v}\epsilon^{3,1}_{3,v}}^{*}(\sigma_{\lambda_{[3]\setminus 3,[1]}}).
\end{equation}
If we write down $\lambda_{[3]\setminus 0,[1]}, \lambda_{[3]\setminus 1,[1]}, \lambda_{[3]\setminus 2,[1]}$ and $\lambda_{[3]\setminus 3,[1]}$ explicitly, it follows from the above equation that the section $\sigma$ is independent of the last row of the index $\lambda$. Then, since $\ubs{\A}$ is a family of sheaves then there is a section 
$$
\sigma_{\lambda_{S, T}} \in 
\A^{2,1}\left(\bigcap_{(k,l)\leq (2,1)} \bigcap_{f \in \Pc_{3,1}^{k,l}} f^{-1}(U_{f(\lambda)}^{3,1})\right),
$$
where $S \subseteq [2], T \subseteq [1]$ with $|S| \leq 2$, and such that $\sigma_{\lambda}$ is the restriction to $U^{3,1}_\lambda$ of 
$\sigma_{\lambda_{S, T}} \in \Lambda_{2,1}$.

Now, given a tuple  $\left( \begin{matrix} A \\ B \\ C \end{matrix} \right) \in \bigcap_{(l,k)\leq (2,1)} \bigcap_{f \in \Pc_{3,1}^{l,k}} f^{-1}(U_{f(\lambda)}^{3,1})$, and according to \ref{associated_sheaf}, we have
\begin{equation*}
\widetilde{\epsilon^{3,1}_{0,v}}^{*}(\sigma_{\widetilde{\epsilon^{3,1}_{0,v}}(\lambda)})\left( \begin{matrix} A \\ B \\ C \end{matrix} \right) =\sigma_{\lambda_{[3]\setminus 0,[1]}}\left( \begin{matrix} A \\ B \end{matrix} \right),  \quad
\widetilde{\epsilon^{3,1}_{1,v}}^{*}(\sigma_{\widetilde{\epsilon^{3,1}_{1,v}}(\lambda)})\left( \begin{matrix} A \\ B \\ C \end{matrix} \right) = \sigma_{\lambda_{[3]\setminus 1,[1]}}\left( \begin{matrix} \left\{ \begin{matrix} A \\ B \end{matrix} \right\} \\ C \end{matrix} \right),
\end{equation*}
\begin{equation*}
\widetilde{\epsilon^{3,1}_{2,v}}^{*}(\sigma_{\widetilde{\epsilon^{3,1}_{2,v}}(\lambda)})\left( \begin{matrix} A \\ B \\ C \end{matrix} \right) = \sigma_{\lambda_{[3]\setminus 2,[1]}}\left( \begin{matrix} A \\ \left\{ \begin{matrix} B \\ C \end{matrix} \right\} \end{matrix} \right)\quad \text{and} \quad
\widetilde{\epsilon^{3,1}_{3,v}}^{*}(\sigma_{\widetilde{\epsilon^{3,1}_{3,v}}(\lambda)})\left( \begin{matrix} A \\ B \\ C \end{matrix} \right) =  l(C)^{-1} \cdot \sigma_{\lambda_{[3]\setminus 3,[1]}}\left( \begin{matrix} B \\ C \end{matrix} \right),
\end{equation*}
and the equation \ref{two_cocycle_1} can be rewritten as
\begin{equation}\label{re_two_cocycle_1}
\sigma_{\lambda_{[3]\setminus 0,[1]}}\left( \begin{matrix} A \\ B \end{matrix} \right) - \sigma_{\lambda_{[3]\setminus 1,[1]}}\left( \begin{matrix} \left\{ \begin{matrix} A \\ B \end{matrix} \right\} \\ C \end{matrix} \right) + \sigma_{\lambda_{[3]\setminus 2,[1]}}\left( \begin{matrix} A \\ \left\{ \begin{matrix} B \\ C \end{matrix} \right\} \end{matrix} \right)-  l(C)^{-1}  \cdot \sigma_{\lambda_{[3]\setminus 3,[1]}}\left( \begin{matrix} B \\ C \end{matrix} \right) =0.
\end{equation}
In the same way we can deduce similar expressions for equations \eqref{two_cocycle_2} and \eqref{two_cocycle_3}.
 
\vspace{0.5 cm} 
{\bf Step 2.} \textit{Passing to a coarser covering of the double groupoid.}\\
Let us consider an open cover $\dbs{\Wc}$ of $\ubs{\F[\Uc]}$ defined in the following way:
\begin{itemize}
\item The indexing family $J_{m,n}$ is defined by 
$J_{0,0} = \{ \ast \}$\;,\; 
$J_{1,0} = I_{0,0}^2 \times I_{1,0}$\;,\;
$J_{0,1}=I_{0,0}^2 \times I_{0,1}$\;,\;
$J_{1,1} = I_{0,0}^2 \times I_{0,0}^2 \times I_{1,1}$\;, \; $J_{20} = I_{00}^3 \times I_{20}$\;, \;$J_{02}=I_{00}^3 \times I_{20}$\;,\;  and $J_{m,n}$ any indexing set in other cases.
\item The collection $\Wc_{m,n}=\{ W^{m,n}_j \}_{j \in J_{m,n}}$ is defined by
\begin{align*}
&W^{0,0}\quad \text{consist of only one open set} \quad \coprod_{i \in I_{0,0}} U^{0,0}_i \quad \text{(disjoint union);}\\
&W^{1,0}_{ijk} = 
\left\{  
\left(
\begin{matrix}
i\\
g\\
j
\end{matrix}
\right) 
\mid
g \in U^{10}_{k},\; t(g) \in U^{00}_{i} \;,\; b(g) \in U^{10}_{j} 
\right\} \; \text{for all $i,j \in I_{00}$ and $k \in I_{10}$;}\\
&W^{01}_{ijk} = 
\left\{  
\left(
\begin{matrix}
i & x & j
\end{matrix}
\right) 
\mid
x \in U^{01}_{k},\; l(x) \in U^{00}_{i} \;,\; r(x) \in U^{10}_{j} 
\right\} \; \text{for all $i,j \in I_{00}$ and $k \in I_{01}$;}\\
&W_{i_{11}i_{12}i_{21}i_{22}j}^{1,1} = 
\left\{
\left(
\begin{matrix}
i_{11} &   & i_{12} \\
	   & F & 		\\
i_{21} &   & i_{22}
\end{matrix}
\right)
\mid
B \in U^{1,1}_{j},\; tl(B) \in U^{0,0}_{i_{11}},\; tr(B) \in U^{0,0}_{i_{12}},\; bl(B) \in U^{0,0}_{i_{21}},\; br(B) \in U^{0,0}_{i_{22}}
\right\}
\end{align*}
for all $i_{00},i_{01},i_{10},i_{11} \in I_{00}$ and $j \in I_{11}$; and let $V^{m,n}$ be any open covering indexed by $J_{m,n}$ , for any other pair $(m,n) \in \mathbb{N}^2$.
\end{itemize}
We will to calculate the first total cohomology group of the pullback sheaf $\ubs{\mathcal{S}}$ of $\ubs{\A}$ 
along the natural  projection map $\ubs{P}: \ubs{\F[\Uc]} \to \ubs{\F}$. 
More exactly, we are going to show that $\Hom_{\Tot}^1(\dbs{\Vc},\ubs{\mathcal{S}})$ is isomorphic 
to $\Hom_{\Tot}^1(\dbs{\Uc}, \ubs{\A})$.

Let us to denote by $\Gamma=\{\Gamma_{m,n}\}$ the indexing family obtained by the process of the proof of lemma \ref{bscover}, applied to the cover $\Vc$ indexed by the family $J$. Here, every $\gamma \in \Gamma_{2,1}$ is a map $\gamma: \Pc_{2,1} \to \bigcup_{k,l} J_{k,l}$
such that $\gamma(\Pc_{2,1}^{k,l}) \subseteq J_{k,l}$ for every $(k,l) \leq (2,1)$,
and it can be represented as a matrix array of size $7 \times 3$, where the rows are indexed by non empty subsets of $[2]$ and the columns are indexed by non empty subsets of $[1]$. Moreover, by definition of $J$, the value $\gamma(S,T) = \ast$ if $S = 0,1, 2$ and $T = 0, 1$.
That is, the block
\begin{equation*}
\left[
\begin{matrix}
 \gamma_{0,0} &  \gamma_{0,1} \\
 \gamma_{1,0} &  \gamma_{1,1} \\
 \gamma_{2,0} &  \gamma_{2,1} \\
\end{matrix} 
\right]
=
\left[
\begin{matrix}
\ast & \ast \\
\ast & \ast \\
\ast & \ast  
\end{matrix}
\right].
\end{equation*}

In the same way, every $\gamma \in \Gamma_{1,2}$ can be represented by a matrix array $\gamma=[\gamma(S,T)]$, of size $3 \times 7$,  where the rows are indexed by non empty subsets $S \subseteq [1]$ and the columns are indexed by non empty subsets $T \subseteq [2]$. As for $\Gamma_{2,1}$, we have that
\begin{equation*}
\left[
\begin{matrix}
 \gamma_{0,0} &  \gamma_{0,1} &  \gamma_{0,2} \\
 \gamma_{1,0} &  \gamma_{1,1} &  \gamma_{1,2} \\
\end{matrix} 
\right]
=
\left[
\begin{matrix}
\ast & \ast & \ast \\
\ast & \ast & \ast \\
\end{matrix}
\right].
\end{equation*}

Let $\gamma:\Lambda_{21} \to \Gamma_{21} $ be the map defined as follows. Given $\lambda \in \Lambda_{21}$ we define $\gamma(\lambda) \in \Gamma_{21}$  by the rules
\begin{itemize}
\item $\gamma(\lambda)_{ST} = \ast$ if $S = 0,1$ or $2$, and $T = 0$ or $1$;
\item $\gamma(\lambda)_{ST} = (\lambda_{ik},\lambda_{jk},\lambda_{ST})$ if $S=\{ i, j\}$ and $T= k$;
\item $\gamma(\lambda)_{S,01} = (\lambda_{S0}, \lambda_{S1}, \lambda_{S,01})$ for $S=0,1$ or $2$;
\item $\gamma(\lambda)_{S,01}=(\lambda_{i0},\lambda_{i1},\lambda_{j0},\lambda_{j1},\lambda_{S,01})$ if $S=\{i,j\}$;
\item $\gamma(\lambda)_{012,T}=(\lambda_{0T},\lambda_{1T},\lambda_{2T},\lambda_{012,T})$ if $T=0$ or $1$;
\item $\gamma(\lambda)_{012,T}=(\lambda_{00},\lambda_{01},\lambda_{10},\lambda_{11},\lambda_{20},\lambda_{21},\lambda_{012,01})$.
\end{itemize}
From the definition of the indexing family $J$ it is clear that $\gamma$ is a bijective map with inverse denoted by $\lambda$. In the same way we can define a bijective map $\Lambda_{12} \to \Gamma_{12}$ which we also denote by $\gamma$ (and inverse $\lambda$) and from the context it will be clear which of them we are using.

Let $\Xi: Z^1(\dbs{\Vc},\ubs{\mathcal{S}}) \to Z^1(\dbs{\Uc},\ubs{\A})$ be the map defined by 
$$(\varphi,\psi)=(\{\varphi_\gamma\}_{\gamma \in \Gamma_{21}},\{\psi_{\gamma}\}_{\gamma \in \Gamma_{12}}) \mapsto (\Xi_1(\varphi),\Xi_2(\psi)):= (\sigma, \tau),
$$
with $\sigma=\{\sigma_{\lambda}\}_{\lambda \in \Lambda_{21}}$ and $\tau= \{\tau_{\lambda}\}_{\lambda \in \Lambda_{12}}),$ where $\sigma_{\lambda}:= \varphi_{\gamma(\lambda)}$ and $\tau_{\lambda} := \psi_{\gamma(\lambda)} $ for all $\lambda$ in $\Lambda_{21}$ or in $\Lambda_{12}$, respectively.

Given a pair $(\varphi, \psi) \in Z^1(\dbs{\Vc}, \ubs{\mathcal{S}})$, then
$\varphi$ is a family $(\varphi_\gamma)_{\gamma \in \Gamma_{2,1}}$ such that for any $\gamma \in \Gamma_{2,1}$
the following equation holds
\begin{equation}\label{coarser_two_cocycle_1}
l(C)^{-1} \varphi_{\omega_{[3]\setminus 0,[1]}}\left( \begin{matrix} A \\ B \end{matrix} \right) 
- \varphi_{\omega_{[3]\setminus 1,[1]}}\left( \begin{matrix} \left\{ \begin{matrix} A \\ B \end{matrix} \right\} \\ C \end{matrix} \right)
+ \varphi_{\omega_{[3]\setminus 2,[1]}}\left( \begin{matrix} A \\ \left\{ \begin{matrix} B \\ C \end{matrix} \right\} \end{matrix} \right)
- \varphi_{\omega_{[3]\setminus 3,[1]}}\left( \begin{matrix} B \\ C \end{matrix} \right) =0.
000\end{equation}

Since $(\varphi, \psi)$ satisfies the cocycle conditions, it is clear that the map $\Xi$ is well defined, moreover it is an isomorphism of abelian groups. The relations
\begin{equation}
(\Xi_1 d^1 \varphi)_\lambda = (d^1 \varphi)_{\gamma(\lambda)} \quad \text{and} \quad 
(\Xi_2 d^1 \psi)_\lambda = (d^1 \psi)_{\gamma(\lambda)},
\end{equation}
satisfied by the map $\Xi$, allow us to induce an isomorphism between the total cohomology groups
$\Hom_{\Tot}^1(\dbs{\Vc},\ubs{\mathcal{S}})$ and $\Hom_{\Tot}^1(\dbs{\Uc}, \ubs{\A})$.

{\bf Step 3.} \textit{From double groupoid extensions to cohomology.}\\
The above result allow us to consider $\dbs{\Uc}$ as an open covering 
with $\Uc_{00} = \Pc$.

Let us consider an extension 
$$1 \to \Kc \overset{\iota}{\hookrightarrow} \B \overset{\Pi}{\twoheadrightarrow} \F \to 1$$
in $\Opext_{\Uc} (\F[\Uc_{00}], \Kc[\Uc_{00}])$. For any $\lambda \in \Lambda_{21}$, if 
$$
\left(\begin{matrix}
A \\ B
\end{matrix}
\right) \in U^{21}_{\lambda} = \bigcap_{(k,l) \leq (2,1)} \bigcap_{f \in \Pc^{k,l}_{21}} \tilde{f}^{-1}(U^{kl}_{\lambda(f)}),
$$
then, by considering the cases $f = \epsilon^{21}_{2,v}$,  $\epsilon_{0,v}^{21}$ and $\epsilon_{1,v}^{21}$ we can assert that $A \in U^{21}_{\lambda_{01}}$, $B \in U_{\lambda_{12}}^{21}$   and 
$
\left\{
\begin{matrix} A \\ B 
\end{matrix}
\right\} \in U^{21}_{\lambda_{02}}$. 

Since there are local sections $\mu_{\lambda_{01,01}}: U^{21}_{\lambda_{01,01}} \to \Kc$, $\mu_{\lambda_{02,01}}: U^{21}_{\lambda_{02,01}} \to \Kc $ and $\mu_{\lambda_{12,01}}: U^{21}_{\lambda_{12,01}} \to \Kc$ of $\Pi$, then we can define $\sigma_{\lambda}: U^{21}_{\lambda} \to \Kc$ by the equation
\begin{equation}\label{action_abelian_grup_bundle_1}
\left\{
\begin{matrix}
\mu_{\lambda_{01,01}}(A) \\
\mu_{\lambda_{12,01}}(B)
\end{matrix}
\right\}
= \sigma_{\lambda}
\left(
\begin{matrix}
A \\
B
\end{matrix}
\right) 
\rightharpoonup \mu_{\lambda_{02,01}}
\left\{
\begin{matrix}
A \\B
\end{matrix}
\right\}
\end{equation}

To show that the above equation defines a simplicial 2-cocycle for the cohomology of the
double groupoid, we come back to the associativity of horizontal and vertical composition, and to the exchange law between them. In fact, if $\lambda \in \Lambda_{31}$ and
$$
\left(
\begin{matrix}
A \\ B \\ C
\end{matrix}
\right)
\in U^{31}_{\lambda} = \bigcap_{(k,l) \leq (3,1)} \bigcap_{f \in \Pc^{(k,l)}_{(3,1)}} \tilde{f}^{-1}(U^{kl}_{\lambda(f)}),
$$
with $f = \epsilon_{i,v}^{31}$  for $i=0,1,2$ and $3$, we have
\begin{equation*}
\left(
\begin{matrix}
B \\ C
\end{matrix}
\right)
\in U^{21}_{\lambda_{[3] \setminus 0}}\;,\;
\left(
\begin{matrix}
\left\{
\begin{matrix}
A \\
B
\end{matrix}
\right\} \\
C
\end{matrix}
\right)
\in U^{21}_{\lambda_{[3] \setminus 1}}\;,\;
\left(
\begin{matrix}
A \\
\left\{
\begin{matrix}
B \\ C
\end{matrix}
\right\}
\end{matrix}
\right)
\in 
U^{21}_{\lambda_{[3] \setminus 2}}\;,\quad \text{and} \quad
\left(
\begin{matrix}
A \\ B
\end{matrix}
\right) \in U^{21}_{\lambda_{[3] \setminus 3}}.
\end{equation*}
Since the vertical composition law is associative, by modifying in each case equation \eqref{action_abelian_grup_bundle_1}, we  obtain that $\sigma = \{\sigma_{\lambda}\}_{\lambda \in \Lambda_{21}}$ satisfies the cocycle equation \eqref{re_two_cocycle_1}. In a similar way, by using horizontal composition law, instead of the vertical one, we can define $\{ \tau_{\lambda}\}_{\lambda \in \Lambda_{12}} \in \prod_{\lambda \in \Lambda_{12}} \mathcal{A}^{12}(U^{12}_{\lambda})$, and in the same way we can show that the other equation that defines a total $2$-cocycle are satisfied.
To see that the cohomology class $[(\sigma, \tau)]$ is independent of the local sections initially taken, we consider $\lambda \in \Lambda_{11}$ and another local sections $\mu_{\lambda_{01,01}}': U^{21}_{\lambda_{01,01}} \to \Kc$ of $\Pi$. Since $\mu_{\lambda_{01,01}}$ and $\mu_{\lambda_{01,01}}'$ have the same sides, there is a continuous map $\alpha_{\lambda}: U^{11}_{\lambda} \to \Kc$, such that for all $A \in U^{21}_{\lambda}$  the equation 
\begin{equation}\label{cohomologous_cocycles_1}
{\mu_{01,01}}(A) = \alpha_{\lambda}(A) \rightharpoonup \mu_{\lambda_{01,01}}'(A)
\end{equation}
holds. Similar equations hold for the other two local sections. Replacing \eqref{cohomologous_cocycles_1} in \eqref{action_abelian_grup_bundle_1} and comparing with the respective equation for $\mu_{\lambda_{01,01}}'$ we find 
\begin{equation}
(\sigma -\sigma')_{\lambda}
\left(
\begin{matrix}
A\\B
\end{matrix}
\right)
=
\alpha_{12,01}(B) - \alpha_{02,01}
\left\{ 
\begin{matrix}
A \\ B
\end{matrix}
\right\}
+ l(B)^{-1} \cdot \alpha_{01,01}(A) = (d_{v}^{11} \alpha)_{\lambda}
\left(
\begin{matrix}
A \\ B
\end{matrix}
\right).
\end{equation}
In the same way we show that $\tau$ satisfies a similar equation, but with the horizontal composition instead of the vertical one, and we can conclude that $(\sigma, \tau)$  and $(\sigma', \tau')$ are cohomologous.

{\bf Step 4.} \textit{From cohomology to double groupoid extensions.}\\
Let $(\sigma, \tau) \in \prod\limits_{\lambda \in \Lambda_{2,1}} \A^{2,1}(U^{2,1}_{\lambda}) \oplus \prod\limits_{\lambda \in \Lambda_{1,2}} \A^{1,2}(U^{1,2}_{\lambda})$ be a 1-cocycle of the double groupoid cohomology

We need to construct a new double double groupoid $\B$ from $(\sigma, \tau)$ that 
fit in an extension 
$$1 \to \Kc \overset{\iota}{\hookrightarrow} \B \overset{\Pi}{\twoheadrightarrow} \F \to 1$$
of $\F$ by $\Kc$ and such that $\Pi$ has sections over each open set $U^{11}_i$ in the cover $\Uc_{11}$.
Thus define 
$$\B' = \coprod_{i \in I_{11}} \left\{ (K, F, i) \mid K \in \Kc,\; F \in U^{11}_i \quad
\text{and} \quad p(K) = lb(F) \right\},$$ 
and let $\sim$ be the relation on $\B'$ defined by 
\begin{equation}\label{eq_rel_1}
(K, F, k) \sim (-\tau_{\lambda(i)}(\id_{l(F)},\id_{l(F)}) + K + \tau_{\lambda(ijk)}(\id_{l(F)}, F),F , j),
\end{equation}
where $\lambda(ijk)$ stands for an element $\lambda \in \Pc_{(1,2)}^{(1,1)}$ with $\lambda_{01,01}= i, \lambda_{01,02}=j$ 
and $\lambda_{01,12}=k$, and $(\id_l(F), \id_l(F)) \in U^{12}_{\lambda(i)}$ 
along with $(\id_{l(F)}, F) \in U^{12}_{\lambda(ijk)}$;   and
\begin{equation}\label{eq_rel_2}
(K, F, k) \sim \left(-\sigma_{\lambda(i)}\left(\begin{matrix}\id_{t(F)} \\ \id_{t(F)}\end{matrix}\right) + K 
+ \sigma_{\lambda(ijk)}\left(\begin{matrix} \id_{t(F)} \\ F \end{matrix} \right), F, j \right),
\end{equation}
where $\lambda(ijk)$ stands for an element $\lambda \in \Pc_{(2,1)}^{(1,1)}$ with $\lambda_{01,01}= i, \lambda_{02,01}=j$ 
and $\lambda_{12,01}=k$, and $\left( \begin{matrix} \id_{l(F)} \\ \id_{l(F)}\end{matrix}\right) \in U^{21}_{\lambda(i)}$ 
along with  $\left( \begin{matrix}\id_{l(F)} \\ F \end{matrix} \right) \in U^{12}_{\lambda(ijk)}$.

To show that $\sim$ defines an equivalence relation on $\B'$, let us define
\begin{align}
\psi_{ikj}^h(F) = -\tau_{\lambda(i)}(\id_{l(F)},\id_{l(F)}) + \tau_{\lambda(ijk)}(\id_{l(F)}, F), \label{eq_rel_3}\\
\psi_{ikj}^v(F) = -\sigma_{\lambda(i)}\left(\begin{matrix}\id_{t(F)} \\ \id_{t(F)}\end{matrix}\right) \label{eq_rel_4}
+ \sigma_{\lambda(ijk)}\left(\begin{matrix} \id_{t(F)} \\ F \end{matrix} \right).
\end{align}
By using cocycle conditions it is no difficult to show that $\psi_{ikj^v}$ and $\psi_{ikj}^h$
are independent of the value of $i$ and that
\begin{center}
\begin{tabular}{lll}
$\psi_{jj}^v = 0$, & & $\psi_{jj}^h = 0$; \\
$\psi_{kj}^v = -\psi_{jk}^v$, & & $\psi_{kj}^h = -\psi_{jk}^v$;\\
$\psi_{jm}^v = \psi_{jk}^v + \psi_{km}^v$, & & $\psi_{jm}^v = \psi_{jk}^h + \psi_{km}^h$;
\end{tabular}
\end{center}
and therefore $\sim$ is an equivalence relation.

Let us denote $\B = \B' / \sim$ and define the following partial composition laws 
on $\B$
\begin{align*}
\left\{
\begin{matrix}
[K, F, \lambda_{01, 01}] & [L, G, \lambda_{01, 12}] 
\end{matrix}
\right\}
&= [K + b(F)\cdot L + \tau_{\lambda}(L, G), \left\{ F \; G \right\}, \lambda_{01, 02}], \\
\left\{
\begin{matrix}
[K, F, \lambda_{01, 01}] \\
[L, G, \lambda_{01, 12}] 
\end{matrix}
\right\}
&= \left[ K + l(F)^{-1}\cdot L + \sigma_{\lambda}
\left(
\begin{matrix}
F \\
G
\end{matrix}
\right), \left\{
\begin{matrix} F \\
G 
\end{matrix}
\right\}, \lambda_{01, 02} \right].
\end{align*}
It is easy to check that these operations, joint with the quotient topology, endow $\B$
with the structure of a double topological groupoid, that is an extension of $\F$ by $\Kc$, 
and that the canonical projection of $\B$ over $\F$ admit a section over each open set $U^{11}_i$ in the open cover $\Uc_{11}$ of $\F$, \emph{i.e} this extension is an element of 
$\Opext_{\Uc} (\F, \Kc)$. Finally, it is no difficult to prove that this correspondence defines an isomorphism between $\Ho^{1}_{\Tot}(\F, \Kc)$ and $\Opext_{\Uc} (\F, \Kc)$ and then, passing to the limit, it follows 
$$\Ext(\F, \Kc) \cong \check{\Ho}^{1}_{\Tot}(\dbs{\Uc}; \ubs{\A}).$$
This last step finish the proof.


%
\end{proof}

\begin{section}{Acknowledgements}
The first named author was supported by the research project ``Groupoid extensions and cohomology'', ID-PRJ: 00006538 of the Faculty of Sciences of Pontificia Universidad Javeriana, Bogotá, Colombia.

The second named author was partially supported by CONICET, ANPCyT and Secyt (UNC).

\end{section}

\end{document}